\pgfplotsset{compat=1.18}
\titleformat{\subsubsection}[runin]
{\normalfont\normalsize\bfseries}{\thesubsubsection}{1em}{}
\numberwithin{equation}{section}
\newtheorem{thm}{Theorem}[section]
\newtheorem{theorem}[thm]{Theorem}
\newtheorem{proposition}[thm]{Proposition}
\newtheorem{lemma}[thm]{Lemma}
\newtheorem{corollary}[thm]{Corollary}
\crefname{corollary}{Corollary}{Corollaries}
\crefname{algorithm}{Algorithm}{Algorithms}
\crefname{figure}{Figure}{Figures}
\newcommand{\C}{\mathcal{C}}
\newcommand{\F}{\mathcal{F}}
\newcommand{\Tr}{\mathrm{Tr}}
\newcommand{\xStar}{x_\star}
\newcommand{\diam}{\mathrm{diam}}
\newcommand{\unitv}[2]{\frac{v_{#1}}{{#2}\|v_{#1} \|}}
\newcommand{\unit}[1]{\frac{{#1}}{\|{#1}\|}}
\newcommand{\ob}{\frac{1}{\beta}}
\newcommand{\oa}{\frac{1}{\alpha}}
\newcommand{\og}{\frac{1}{\gamma}}
\newcommand{\K}{\mathcal{K}}
\newcommand{\I}{\mathcal{I}}
\newcommand{\R}{\mathbb{R}}
\newcommand{\interior}{\mathrm{int\ }}
\newcommand{\Interior}[1]{\mathrm{int}_{#1}}
\newcommand{\half}{\frac{1}{2}}
\newcommand{\epi}{\mathrm{epi}}
\newcommand{\obsq}{\frac{1}{\beta^2}}
\newcommand{\argmin}{\operatornamewithlimits{argmin}}
\newcommand{\fmin}{f_\mathrm{min}}
\newcommand{\SSetInterp}{( \{ (z_i,v_i)\}_{i \in \I}, \{(x_k, \delta_k)\}_{k \in \K} )}
\newcommand{\SSpecialInterior}{\left(\{ (z_i,-g_i)\}_{i\in \I}, \ \{(x_0,0), (x_\star, \delta)\} \right)}
\newcommand{\SSpecialExterior}{\left(\{ (z_i,-g_i)\}_{i\in \I} \cup \{(x_\star, -g_\star)\}, \ \{(x_0,0)\} \right)}
\newcommand{\IStar}{\I_\star}
\newcommand{\KStar}{\K_\star}
\newcommand{\SExternal}{S_{g_\star \neq 0}}
\newcommand{\SInternal}{S_{g_\star = 0}^\delta}
\newcommand{\bdry}{\mathrm{bdry\ }}
\newcommand{\conditionSmSCSet}[1]{Q^{i,j}_{#1}}
\newcommand{\conditionSmSCFunc}[1]{\hat{Q}^{i,j}_{#1}}
\newcommand{\rescaledArgs}{\frac{x}{\eta}, \eta g, f}
\newcommand{\standardArgs}{x, g, f}
\newcommand{\gramMatrix}{\Lambda}
\newcommand{\om}{\frac{1}{\mu}}
\newcommand{\ol}{\frac{1}{L}}
\newcommand{\pFWSubproblem}{\hat{p}_\mathrm{FW}}
\newcommand{\ballUB}[1]{\mathfrak{b}_{#1}}
\newcommand{\quadraticUB}[1]{\mathfrak{q}_{#1}}
\newcommand{\el}{l}
\newcommand{\pEpiSmooth}{p_{\mathrm{ES}}}
\newcommand{\sqp}[1]{\sqrt{{#1}+1}}
\newcommand{\setInterpArgs}{(\{((x_i, f_i), (g_i, -1)) \}_{i \in \I}, \emptyset )}
\newcommand{\setInterpRescaledArgs}{(\{((
    \frac{x_i}{\eta}, f_i), (\eta g_i, -1)) \}_{i \in \I}, \emptyset )}
\newcommand{\etaIdx}[1]{{#1}^{(\eta)}}
\newcommand{\edit}[1]{#1}
\newcommand{\myqed}{} %Ignore command for arxiv version
\newcommand{\branchEpiCoord}{116} %Shift coordinate for epismooth tikz image 
\newcommand{\branchTriangCoord}{32} %Shift coordinate for triangle tikz image 
\theoremstyle{definition}
\newtheorem{remark}{Remark}
\theoremstyle{plain}
\theoremstyle{definition}
\newtheorem{definition}[thm]{Definition}
\begin{document}
    
    \title{Performance Estimation for Smooth and Strongly Convex Sets}

    \author{Alan Luner\footnote{Johns Hopkins University, Department of Applied Mathematics and Statistics, \url{aluner1@jhu.edu}} \qquad Benjamin Grimmer\footnote{Johns Hopkins University, Department of Applied Mathematics and Statistics, \url{grimmer@jhu.edu}}}

    \date{}
    \maketitle
    \begin{abstract}
        We extend recent computer-assisted design and analysis techniques for first-order optimization over structured functions---known as performance estimation---to apply to structured sets. We prove ``interpolation theorems'' for smooth and strongly convex sets with interior point conditions and bounded diameter, showing a wide range of extremal questions amount to structured mathematical programs. Prior function interpolation theorems are recovered as a limit of our set interpolation theory. Our theory provides finite-dimensional formulations of performance estimation problems for algorithms utilizing separating hyperplane oracles and linear optimization oracles of smooth/strongly convex sets. As applications of this computer-assisted machinery, we identify a minimax optimal separating hyperplane method and areas for improvement in the theory of Frank-Wolfe and non-Lipschitz Smooth Optimization.
    \end{abstract}

    \section{Introduction} \label{Sect:Intro}
Given a proposed algorithm, the study of its worst-case convergence guarantees over some family of problem instances can be framed as a meta-optimization problem. In this work, we will consider examples of first-order optimization methods applied to problems from a structured family of objective functions and constraint sets. Given some measure of performance (e.g., final objective gap), the exact worst-case performance of the method corresponds to computing the maximum of this measure at the algorithm's output over all problem instances. This is known as the Performance Estimation Problem (PEP). In general, optimizing over a family of functions and sets is an infinite-dimensional problem, typically beyond direct approach. Surprisingly, recent advances have shown that PEPs can often be computer-solved and have led to several state-of-the-art results in the design and analysis of optimization algorithms.

Performance estimation was first proposed by Drori and Teboulle in~\cite{FirstPEP}, where tractable relaxations of these worst-case PEPs were considered. Shortly afterward, the ``interpolation theorems'' of Taylor, Hendrickx, and Glineur\edit{~\cite{Interpolation,Interpolation2}} proved that for many gradient methods and structured function classes, the PEP exactly corresponds to a tractable finite-dimensional problem. 
Specifically, their function interpolation results showed that the infinite-dimensional optimization problem of finding a worst-case problem instance for $N$ steps of a given gradient method could be equivalently reformulated as a nonconvex Quadratically Constrained Quadratic Program (QCQP), which can often then be reformulated as a Semidefinite Program (SDP). As a result, for a modest fixed number of algorithm steps $N$, computers can be used to determine an algorithm's worst-case problem instances and, dually, to produce optimal convergence proofs. \edit{Similar techniques from the perspective of control were developed by~\cite{Lessard2016}.} These computer-assisted techniques provide an invaluable starting point for formally proving improved theory for general $N$.

\edit{Since this initial application, performance estimation theory has been applied to many different settings and algorithms, including conditional gradient~\cite{Interpolation2}, proximal gradient~\cite{Interpolation2,ProxGradientPEP_Taylor,AcceleratedProx_Kim}, alternating projections~\cite{Interpolation2,zamani2024}, stochastic gradient descent~\cite{Taylor_InterpolationStochastic}, line-search~\cite{ExactLinesearchPEP_Klerk,Drori2019}, strong convexity~\cite{Interpolation,ITEM_Taylor,Drori2019}, fixed point iteration~\cite{FixedPointPEP_Lieder,OptimalFixedPoint_Park}, and much more. See the growing collection of examples documented in the PEPit toolbox~\cite{pepit2022}\footnote{Available at \url{https://pepit.readthedocs.io/en/latest/examples.html}.}.
}

\edit{Computer PEP solutions directly produce tight numerical convergence guarantees and, in many cases, have led to tight analytical convergence rates. They also} have a record of directly improving the state of the art in algorithm design by enabling the optimization of parameter selections. In general, the task of finding algorithmic parameters (e.g., stepsizes and momentum sequences) minimizing the worst-case PEP guarantee is a nonconvex problem\footnote{A branch-and-bound solver for this hard task was developed by Das Gupta et al.~\cite{ShuvoBnB}.}. \edit{Despite this apparent hardness, many recent works have found success at this task, often by observing structure in numerical results and devising corresponding analytical solutions. Some such advances include:
\begin{itemize}[noitemsep,topsep=0pt]
    \item For smooth convex optimization, Kim and Fessler~\cite{OGM} derived an Optimized Gradient Method, improving on Nesterov's accelerated method by constants and attaining the minimax optimal rate by Drori~\cite{Drori_OGMOptimal}. In~\cite{OGMG}, they proposed an optimized method (OGM-G) for minimizing gradient norm.
    \item For smooth strongly convex optimization, Taylor and Drori~\cite{ITEM_Taylor} derived an exactly optimal method (ITEM) improving upon the asymptotically optimized triple momentum method of~\cite{TripleMomentum_Scoy}.
    \item Altschuler and Parrilo~\cite{Silver_Accel_General,Silver_Accel} and Grimmer, Shu, and Wang~\cite{Grimmer_Accel_Original,Grimmer_Accel} showed that gradient descent with fractal stepsizes provably accelerates in big-O.
\end{itemize}
These advances} on long-standing, best-known results are in large part due to the insights generated from performance estimation and computer assistance.

The primary aim of this work is the development of new interpolation theorems for smooth and strongly convex sets, potentially with \edit{interior point conditions} and bounded diameter, paralleling the function-oriented theorems of Taylor et al.~\cite{Interpolation,Interpolation2}. Prior function interpolation theorems in~\cite{Interpolation2} can be used to model simply convex constraint sets \edit{but do not capture additional structural properties of set smoothness and strong convexity. Our theorems enable the formulation of computer-solvable PEPs over these structured families of constraint sets.}
Formal definitions of smoothness and strong convexity of closed convex sets and functions (and several equivalent conditions) are given in \cref{Sec:Preliminaries}. In brief, a closed convex function $f$ is smooth and strongly convex if each subgradient provides certain quadratic upper and lower bounds on $f$, respectively. Similarly, a closed convex set $C$ is smooth and strongly convex if each normal vector provides certain balls inner and outer approximating $C$.

\paragraph{Our Contributions.}
Our primary contribution is a collection of interpolation theorems for structured sets, \edit{extending the prior theory of \cite{Interpolation2} for convex sets}.
Our theorems provide tractable, finite representations for a variety of settings, including smooth and strongly convex sets, sets with bounded diameters, and sets with \edit{interior point conditions}. See Theorems~\ref{Thm:Interpolation_Necc}-\ref{Thm:Interpolation_NoSlack}. These interpolations allow us to express the worst-case performance of many constrained optimization algorithms as numerically solvable performance estimation problems.
\edit{\cref{Thm:SetFuncInterp_Smooth} relates our theory to prior function interpolation theory.} 

\vspace{0.2cm}

\noindent {\it Outline.}
\cref{Sec:Preliminaries} defines basic notations and presents the function interpolation theorems of~\cite{Interpolation}. \cref{Sec:Interpolation} then presents our main result: interpolation theorems for structured sets. The remainder of this paper presents three applications, \edit{showing computational\footnote{All numerical results were obtained with \texttt{Mosek}~\cite{mosek} or \texttt{Gurobi}~\cite{gurobi} via \texttt{JuMP}~\cite{JuMP}. Code for reproducibility is available at \href{https://github.com/alanluner/PEPStructuredSets}{\texttt{github.com/alanluner/PEPStructuredSets}}.} and modeling potential of our theory:
\begin{itemize}
    \item \cref{Sec:SepHyperplane} shows, given a separating hyperplane oracle for a set $C$, our interpolation theorems allow one to determine any proposed method's convergence to feasibility as a semidefinite program. Matching resulting numerics, we derive a simple, minimax optimal, separating hyperplane algorithm.
    \item \cref{Sec:FW} shows, for constrained optimization, that our interpolation theorems enable the performance of the Frank-Wolfe method to be modeled as a semidefinite program with separable nonconvex equality constraints. By solving such nonconvex problems (globally and locally), we survey the potential for the Frank-Wolfe method to accelerate given structured sets.
    %\item \cref{Sec:AltProj} shows, for feasibility problems seeking a point in the intersection of two sets, our interpolation theorems can describe the worst-case convergence of alternating projections, also as a semidefinite program with separable nonconvex equality constraints. 
    \item \cref{Sec:EpiSmooth} shows, for unconstrained optimization, that our interpolation theorems allow us to analyze optimization of functions with a smooth epigraph. Preliminary results, solving semidefinite programs with a rank constraint, show potential to generalize current accelerated theory. 
\end{itemize}
}

    \section{Definitions and Preliminaries}\label{Sec:Preliminaries}
\cref{Subsec:func-def-interpol} first introduces standard definitions of smooth and strongly convex functions and the function interpolation of Taylor et al.~\cite{Interpolation}. \cref{Subsec:set-def-interpol} then introduces the analogous concepts for sets in preparation for our main results.

\subsection{Preliminaries on Smooth and Strongly Convex Functions}\label{Subsec:func-def-interpol}
Consider a closed convex function $f\colon \mathbb{R}^d\rightarrow \mathbb{R}\cup \{\infty\}$. We denote $\langle\cdot,\cdot\rangle$ and $\|\cdot\|$ for the Euclidean inner product and associated norm. We denote the domain of $f$ by $\mathrm{dom\ }f$ and its subdifferential at some $x\in\mathrm{dom\ }f$ by $\partial f(x) = \{g\mid f(y) \geq f(x)+\langle g, y-x\rangle \quad  \forall y\in\mathbb{R}^d\}$. Each element $g\in\partial f(x)$ is referred to as a subgradient and must be the gradient $\nabla f(x)$ when $f$ is differentiable.

Typically, $L$-smoothness of a function is defined as $\nabla f(x)$ being $L$-Lipschitz. For the sake of developing other symmetries, here we instead (equivalently) define a closed convex $f$ as being {\bf $L$-smooth} if for all $x,y \in \R^d$ and $g\in\partial f(x)$,
\begin{equation}    \label{Eqn:def-sm-func}
    f(y) \leq f(x) + \langle g, y-x \rangle + \frac{L}{2}\|y-x\|^2 \ .
\end{equation}
That is, each subgradient provides a quadratic upper bound on $f$. Mirroring this, we say $f$ is {\bf $\mu$-strongly convex} if for all $x,y \in \R^d$ and $g\in\partial f(x)$,
\begin{equation} \label{Eqn:def-sc-func}
    f(y) \geq f(x) + \langle g, y-x \rangle + \frac{\mu}{2}\|y-x\|^2 \ . 
\end{equation}
That is, each subgradient provides a quadratic lower bound on $f$.
We allow the extreme cases of $L,\mu \in \{0,\infty\}$, taking limits above. Every convex $f$ is vacuously $\infty$-smooth and $0$-strongly convex. The only $0$-smooth functions are linear and the only $\infty$-strongly convex functions are indicators of points.
Denote the set of all $\mu$-strongly convex and $L$-smooth functions $f$ by $\mathcal{F}_{\mu,L}$.

As a motivating example of the performance estimation problem, consider minimizing some \edit{$f \in \mathcal{F}_{0,L}$} via $N$ \edit{steps of a gradient method, defined as
\begin{equation} \label{Eqn:GM}
    x_{i+1} = x_i - \frac{1}{L} \sum_{j=0}^i H_{i,j} \nabla f(x_j) \tag{GM}
\end{equation}
with initial point $x_0$ and a lower triangular matrix $H$. Note this model captures simple methods like gradient descent via a diagonal $H$ and more complex momentum methods like Nesterov's accelerated method~\cite{Nesterov} and the optimal gradient method of~\cite{OGM}.}
Suppose as an initial condition, we are guaranteed the initial distance to a minimizer $\|x_0-x_\star\|$ is \edit{bounded above by $R$}. Then the infinite-dimensional PEP of seeking a problem instance with objective in \edit{$\mathcal{F}_{0,L}$} achieving the largest final objective gap can be formulated as
\begin{equation}\label{Eqn:basicGD-example1}
\edit{
    p_\mathrm{S}(L,R) := \begin{cases}
    \max\limits_{x_i,f} & f(x_N) - f(x_\star)\\
    \mathrm{s.t.} & f\in \mathcal{F}_{0,L}\\
    & x_{i+1} = x_i - \frac{1}{L} \sum_{j=0}^i H_{i,j} \nabla f(x_j) \\
    & \|x_0-x_\star\| \leq R, \quad \nabla f(x_\star) = 0 \ .
\end{cases}
}
\end{equation}
Note that aside from the constraint \edit{$f\in \mathcal{F}_{0,L}$}, this optimization problem only depends on $f$ via its function value and gradient at finitely many points, namely $\{x_0,\dots, x_N, x_\star\}$.
Denote these values by $f_i=f(x_i)$ and $g_i=\nabla f(x_i)$ for each $i\in\{0,\dots, N,\star\}$.
The following definition of function interpolation from~\cite{Interpolation} allows us to reformulate into these more limited quantities:
\begin{definition}\label{Def:FuncInterpolability}
     Consider a set of observations $S = \{(x_i, g_i, f_i)\}_{i \in \I}$ where $x_i,g_i \in \R^d$ and $f_i \in \R$ for all $i \in \I$. The set $S$ is $\mathcal{F}_{\mu,L}$\textbf{-interpolable} if there exists a function $f \in \F_{\mu,L}$ such that $g_i \in \partial f(x_i)$ and $f(x_i) = f_i$ for all $i \in \I$.
\end{definition}

Function interpolability describes whether there exists an appropriate function that matches the given finite set of function values and subgradients and interpolates between them (See \cref{Fig:FunctionSetInterpolation}). In terms of the working example~\eqref{Eqn:basicGD-example1}, this can then be expressed finitely as
\begin{equation}\label{Eqn:basicGD-example2}
    \edit{
    p_\mathrm{S}(L,R) = \begin{cases}
    \max\limits_{x_i,g_i,f_i} & f_N - f_\star\\
    \mathrm{s.t.} & \{(x_i, g_i, f_i)\}_{i \in \{0,\dots,N,\star\}} \text{ is }\mathcal{F}_{0,L}\text{-interpolable}\\
    & x_{i+1} = x_i - \frac{1}{L} \sum_{j=0}^i H_{i,j}  g_j \\
    & \|x_0-x_\star\| \leq R, \quad g_\star=0 \ .
\end{cases}
}
\end{equation}

Although finite, this formulation is still not particularly tractable as the interpolability constraint is, at first glance, not approachable. The critical insight of Taylor et al.~\cite[Theorem 4]{Interpolation} was establishing explicit necessary and sufficient conditions for function interpolability.
\begin{theorem}\label{Thm:func-interp} \emph{(\!\!\cite[Theorem 4]{Interpolation})}
    A set $S = \{(x_i, g_i, f_i)\}_{i \in \I}$ is $\F_{\mu,L}$-interpolable if and only if the following quadratic condition holds for all $i,j \in \I$:
    \begin{equation}    \label{Eqn:func-interp-constraint}
        f_i - f_j + \langle g_i, x_j - x_i \rangle + \frac{1}{2(1-\mu/L)}\left(\frac{1}{L} \|g_i - g_j \|^2 + \mu \|x_i - x_j \|^2 - \frac{2\mu}{L}\langle g_i - g_j, x_i - x_j \rangle \right) \leq 0 \ .
    \end{equation}
\end{theorem}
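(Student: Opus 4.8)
The plan is to prove both implications by transporting the question to the base class of closed proper convex functions carrying no extra curvature structure, where interpolability is transparent, using two structure-preserving transformations. First, the shift $f\mapsto f-\tfrac{\mu}{2}\|\cdot\|^2$ is a bijection $\F_{\mu,L}\to\F_{0,L-\mu}$ sending a triple $(x_i,g_i,f_i)$ to $(x_i,\,g_i-\mu x_i,\,f_i-\tfrac{\mu}{2}\|x_i\|^2)$. Second, Fenchel conjugation $f\mapsto f^\ast$ is a bijection $\F_{0,L}\to\F_{1/L,\infty}$, via the standard smoothness--strong-convexity duality ($f$ is $L$-smooth iff $f^\ast$ is $\tfrac1L$-strongly convex), sending $(x_i,g_i,f_i)$ to the swapped triple $(g_i,\,x_i,\,\langle x_i,g_i\rangle-f_i)$, since $g\in\partial f(x)\Leftrightarrow x\in\partial f^\ast(g)$ and $f^\ast(g)=\langle x,g\rangle-f(x)$ at such a pair. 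Composing shift $\circ$ conjugation $\circ$ shift carries any $\F_{\mu,L}$ with $0<\mu<L<\infty$ onto $\F_{0,\infty}$, where \eqref{Eqn:func-interp-constraint} should degenerate into the plain subgradient inequality. So the core tasks are: (i) show \eqref{Eqn:func-interp-constraint} is invariant, up to the obvious substitutions, under each transformation; and (ii) handle the boundary parameter values, where \eqref{Eqn:func-interp-constraint} is read as a limit.

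\textbf{Necessity.} Here I would check the base class and pull back. In $\F_{0,\infty}$, \eqref{Eqn:func-interp-constraint} is exactly the definition of a subgradient. For $\F_{0,L}$ with $L<\infty$: fixing $j$, the function $\phi(x)=f(x)-\langle g_j,x\rangle$ lies in $\F_{0,L}$ and is globally minimized at $x_j$ (as $0\in\partial\phi(x_j)$); applying the smoothness bound \eqref{Eqn:def-sm-func} to $\phi$ at $x_i$ with subgradient $g_i-g_j$ and minimizing the resulting quadratic yields
\[
 \phi(x_j)\ \le\ \phi(x_i)-\tfrac{1}{2L}\|g_i-g_j\|^2,
\]
which rearranges to \eqref{Eqn:func-interp-constraint} with $\mu=0$ (after interchanging $i,j$). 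For $0<\mu<L<\infty$, apply the $\mu=0$ inequality to the shifted data $(x_i,\,g_i-\mu x_i,\,f_i-\tfrac{\mu}{2}\|x_i\|^2)\in\F_{0,L-\mu}$ and expand; the prefactor $\tfrac{L}{2(L-\mu)}=\tfrac{1}{2(1-\mu/L)}$ and the cross term in \eqref{Eqn:func-interp-constraint} are produced exactly by this expansion. The case $L=\infty,\ \mu>0$ is the shift of the $\F_{0,\infty}$ case, and $\mu=L$ is immediate since $\F_{L,L}$ contains only $\tfrac{L}{2}\|\cdot\|^2+\langle c,\cdot\rangle+b$, for which the bracketed quadratic in \eqref{Eqn:func-interp-constraint} vanishes and its affine part holds with equality.

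\textbf{Sufficiency.} Assume \eqref{Eqn:func-interp-constraint} holds for all $i,j$. Treat first $\mu=0<L<\infty$. A direct substitution shows \eqref{Eqn:func-interp-constraint} with $\mu=0$ is equivalent to
\[
 \langle x_i,g_i\rangle-f_i\ \ge\ \big(\langle x_j,g_j\rangle-f_j\big)+\langle x_j,\,g_i-g_j\rangle+\tfrac{1}{2L}\|g_i-g_j\|^2 \qquad\text{for all }i,j,
\]
i.e.\ it is precisely the condition that
\[
 h(y):=\max_{i\in\I}\Big\{\langle x_i,g_i\rangle-f_i+\langle x_i,\,y-g_i\rangle+\tfrac{1}{2L}\|y-g_i\|^2\Big\}
\]
satisfies $h(g_j)=\langle x_j,g_j\rangle-f_j$ with the $j$-th quadratic active at $g_j$, whence $x_j\in\partial h(g_j)$. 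As a maximum of quadratics all with Hessian $\tfrac1L I$, $h$ is $\tfrac1L$-strongly convex and finite-valued, so $f:=h^\ast\in\F_{0,L}$; Fenchel duality then gives $g_j\in\partial f(x_j)$ and $f(x_j)=\langle x_j,g_j\rangle-h(g_j)=f_j$. For $0<\mu<L<\infty$, observe the shifted data satisfies the $\F_{0,L-\mu}$ version of \eqref{Eqn:func-interp-constraint} (reversing the expansion from the necessity step), build $\hat f\in\F_{0,L-\mu}$ interpolating it as above, and set $f:=\hat f+\tfrac{\mu}{2}\|\cdot\|^2\in\F_{\mu,L}$. The boundary cases are again by inspection: $L=\infty$ uses $f(x)=\sup_{i}\{f_i+\langle g_i,x-x_i\rangle\}$ for $\mu=0$, followed by a shift; $\mu=L$ uses the quadratic pinned down by the forced relations $g_i-g_j=L(x_i-x_j)$.

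\textbf{Main obstacle.} The substance is the sufficiency direction, and within it the conjugation step: one must verify that \eqref{Eqn:func-interp-constraint} translates term-for-term into interpolability of the swapped data (a careful but elementary identity), and that the ``max of congruent quadratics'' $h$ is genuinely $\tfrac1L$-strongly convex with the prescribed subgradients, so that $h^\ast$ is genuinely $L$-smooth---not merely convex---and recovers $(x_j,g_j,f_j)$; this is exactly where the shared Hessian $\tfrac1L I$ of the quadratic pieces is essential. A secondary, purely bookkeeping difficulty is the treatment of the degenerate values $\mu=0$, $L=\infty$, and $\mu=L$, where \eqref{Eqn:func-interp-constraint} must be read as a limit and each reduces to an elementary special case. (If $\I$ is infinite the maxima become suprema and one additionally passes to the closed convex hull of $h$; this affects neither the interpolation identities nor the strong-convexity estimate.)
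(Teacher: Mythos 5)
The paper does not prove this theorem; it is imported by citation from Taylor, Hendrickx, and Glineur~\cite{Interpolation}. Your proposal is a correct reconstruction of exactly that reference's argument: reduce via the shift $f\mapsto f-\tfrac{\mu}{2}\|\cdot\|^2$ to the class $\F_{0,L-\mu}$, and there realize the interpolant as the conjugate of $h(y)=\max_i\{\langle x_i,g_i\rangle-f_i+\langle x_i,y-g_i\rangle+\tfrac{1}{2L}\|y-g_i\|^2\}$, using that \eqref{Eqn:func-interp-constraint} with $\mu=0$ (after the $i\leftrightarrow j$ swap) is precisely the activity condition that makes the $j$-th quadratic touch at $g_j$, so $x_j\in\partial h(g_j)$ and $h^\ast(x_j)=f_j$; the shared Hessian $\tfrac1L I$ then gives $\tfrac1L$-strong convexity of $h$ and hence $L$-smoothness of $h^\ast$ by conjugate duality. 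The necessity check (minimize the $L$-smooth function $\phi=f-\langle g_j,\cdot\rangle$ to get $\phi(x_j)\le\phi(x_i)-\tfrac{1}{2L}\|g_i-g_j\|^2$), the algebra producing the $\tfrac{1}{2(1-\mu/L)}$ prefactor and cross term under the shift, and the boundary cases $\mu=0$, $L=\infty$, $\mu=L$ are all handled correctly. No gap.
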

Plugging this into~\eqref{Eqn:basicGD-example2} yields an explicit QCQP exactly describing gradient descent's worst-case behavior over the considered family of structured functions. Semidefinite programming lifting techniques (and duality) can be further used to gain insights from this new formulation.

\edit{
\begin{remark} \label{Rem:DimensionBound}
    Throughout the literature and in this work, the \textit{equivalent} reformulation of the QCQP into a semidefinite program depends on an assumption of high dimension (e.g., $d \geq N+2$). This assumption is only required for the SDP to be an equivalent problem, rather than a relaxation. For any $d$, the resulting SDP provides a valid, dimension-independent upper bound.
\end{remark}
}

It is worth noting that there are many collections of necessary constraints on the set of observation data $S$. Any standard inequality for $\mu$-strongly convex, $L$-smooth functions must necessarily hold: for example, the definitions~\eqref{Eqn:def-sm-func} and~\eqref{Eqn:def-sc-func}. However, as Taylor et al.~\cite{Interpolation} highlight, these other inequalities prove not to be sufficient for interpolation. Hence, identification of the right inequalities, e.g.,~\eqref{Eqn:func-interp-constraint}, is key for the ``if and only if'' nature of interpolation theory and, consequently, its power in enabling computer assistance. Similarly, in \cref{Sec:Interpolation}, we find that while there are many necessary constraints for set interpolability, a careful selection is needed for sufficiency.

\subsection{Preliminaries on Smooth and Strongly Convex Sets} \label{Subsec:set-def-interpol}

We now switch our focus to smoothness and strong convexity of sets. These notions mirror their function counterparts; see~\cite{Vial_StrongConvexity,Vial_StrongAndWeak,Goncharov2017} as classic references. These properties are, however, quite under-explored in the contemporary first-order method literature compared to their function counterparts, which are the backbone of much modern convergence theory. In part, this work aims to provide tools to eventually alleviate this discrepancy.

Consider a nonempty closed convex set $C\subseteq \mathbb{R}^d$.
We denote the interior of a set $C$ by $\interior C$ and its boundary by $\bdry C$. We denote the normal cone of $C$ at some $z \in \bdry C$ by $N_C(z) = \{v \mid \langle v, x-z\rangle \leq 0 \quad  \forall x \in C \}$ and refer to individual elements as normal vectors. Let $B(x,r)$ be the closed ball of radius $r$ centered at $x$. We define $\diam(C) = \sup\{\|x-y\| \mid x,y \in C \}$ and the $\delta$-interior of $C$ by $\Interior{\delta} C = \{x \mid B(x,\delta) \subseteq C\}$. Note $\Interior{0} C = C$.
Lastly, we define the Minkowski sum of two sets $C_1$ and $C_2$ as
\begin{equation*}
    C_1 + C_2 = \{x+y \mid x\in C_1, y \in C_2 \} \ .
\end{equation*}

We now define smoothness and strong convexity with respect to sets. For a more full discussion of the equivalent definitions of these properties and their analogs with smooth and strongly convex functions, we refer readers to \cite[Section 2]{LiuGrimmer}. Overall, one can recover the analogous definitions for functions by replacing unit normal vectors with gradients, bounding balls with bounding quadratics, and sets with epigraphs.
A set $C$ is $\beta$\textbf{-smooth} if for any $z \in \bdry C$ and unit vector $n \in N_C(z)$,
\begin{equation*} \label{Eqn:SmoothSet}
    B(z - \ob n, \ob) \subseteq C \ . 
\end{equation*}
A set $C$ is $\alpha$\textbf{-strongly convex} if for any $z \in \bdry C$ and unit vector $n \in N_C(z)$,
\begin{equation*} \label{Eqn:SCSet}
    B(z - \oa n, \oa) \supseteq C \ .
\end{equation*}
\noindent Again, we allow the limiting cases of $\alpha,\beta\in\{0,\infty\}$. Every convex set is $0$-strongly convex and $\infty$-smooth. The only $\infty$-strongly convex sets are singletons and the only $0$-smooth sets are halfspaces.
Let $\mathcal{C}_{\alpha,\beta,D}$ denote the set of all closed convex sets $C$ that are $\alpha$-strongly convex, $\beta$-smooth, and have $\diam(C) \leq D$. See an illustration of smooth and strongly convex sets in \cref{Fig:sm-sc-set-examples}.

\edit{Our discussion of set smoothness and strong convexity throughout will focus on these properties at full dimension. One could consider smoothness/strong convexity with respect to a fixed subspace. Through a simple change of variables, our definitions and theory could be applied.}

\begin{figure}
    \centering
    \scalebox{0.6}{\input{Tikz_SmoothAndSCSets_WithC0}}
    \caption{Example closed convex sets with red inner approximating balls from smoothness and blue outer approximating balls from strong convexity. Namely, (a) is $\beta$-smooth but not strongly convex, (b) is $\beta$-smooth and $\alpha$-strongly convex, and (c) is $\alpha$-strongly convex but not smooth. \edit{The red dotted lines indicate the set $C_0$ from \cref{Prop:SmoothProperties} and the blue dotted lines indicate the set $C_0$ from \cref{Prop:SCProperties}. The blue dot marks the origin where applicable.}}
    \label{Fig:sm-sc-set-examples}
\end{figure}

Below are equivalent characterizations of set smoothness and strong convexity that are useful in our analysis, paralleling classic results for functions.
\begin{proposition}   \label{Prop:SmoothProperties}
    \emph{(\!\!\cite[Proposition 3]{LiuGrimmer})}
    For any \edit{nonempty} closed convex set $C$, the following are equivalent:
    \begin{enumerate}
        \item $C$ is $\beta$-smooth.
        \item For any $z_1, z_2 \in \bdry C$, with unit normals $n_1 \in N_C(z_1)$ and $n_2 \in N_C(z_2)$, 
        \begin{equation*}
            \langle z_1-z_2, n_1 - n_2 \rangle \geq \ob \| n_1 - n_2 \|^2 \ .
        \end{equation*}
        \item There exists a closed convex set $C_0$ such that $C_0 + B(0,\ob) = C$.
    \end{enumerate}
\end{proposition}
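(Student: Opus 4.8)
The plan is to establish the four implications $(1)\Rightarrow(2)$, $(2)\Rightarrow(1)$, $(1)\Rightarrow(3)$, and $(3)\Rightarrow(1)$, which suffices for the full equivalence. I would assume throughout that $0<\beta<\infty$ and $\bdry C\neq\emptyset$; the remaining cases ($\beta\in\{0,\infty\}$, and the trivial $C=\R^d$) follow by taking limits together with classical facts (every convex set has a monotone normal map; the only $0$-smooth sets are halfspaces; the $\beta=\infty$ condition is vacuous). The implication $(1)\Rightarrow(2)$ is direct: for $z_1,z_2\in\bdry C$ with unit normals $n_1,n_2$, smoothness gives $B(z_i-\ob n_i,\ob)\subseteq C$, so evaluating the inequality for $n_2\in N_C(z_2)$ at the point $z_1-\ob n_1+\ob n_2\in C$ (the farthest point of the first ball in direction $n_2$) yields $\langle n_2,z_1-z_2\rangle\le\ob(\langle n_1,n_2\rangle-1)$; adding this to its counterpart with the indices swapped and using $\|n_1\|=\|n_2\|=1$ produces (2). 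The implication $(3)\Rightarrow(1)$ is also short: writing $C=C_0+B(0,\ob)$ and $z=x_0+u_0$ with $x_0\in C_0$, $\|u_0\|\le\ob$, for a boundary point $z$ with unit normal $n$, the fact that $x_0+\ob n\in C_0+B(0,\ob)=C$ forces $\langle n,u_0\rangle\ge\ob$, so Cauchy--Schwarz gives $u_0=\ob n$, whence $z-\ob n=x_0\in C_0$ and $B(z-\ob n,\ob)=x_0+B(0,\ob)\subseteq C$.

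The first nontrivial step is $(2)\Rightarrow(1)$: given $z_0\in\bdry C$ with unit normal $n_0$, I want $B(z_0-\ob n_0,\ob)\subseteq C$, and the device is to locate the ``worst'' point of this ball relative to $C$. Set $c_0:=z_0-\ob n_0$ and let $w$ maximize the convex, continuous function $x\mapsto\mathrm{dist}(x,C)$ over the compact ball $B(c_0,\ob)$. If $t:=\mathrm{dist}(w,C)=0$ the ball lies in $C$ and we are done; otherwise put $\bar z:=\proj_C(w)\in\bdry C$ and $\bar n:=(w-\bar z)/t$, a unit normal at $\bar z$. Since $\mathrm{dist}(\cdot,C)$ is convex and differentiable off $C$ with gradient $\bar n$ at $w$, maximality of $w$ over the ball places $\bar n$ in the normal cone of $B(c_0,\ob)$ at $w$, which forces $w=c_0+\ob\bar n$ and hence $\bar z=z_0-\ob n_0+(\ob-t)\bar n$. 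Substituting $z_0-\bar z=\ob n_0+(t-\ob)\bar n$ into the inequality of (2) for the pair $(z_0,n_0),(\bar z,\bar n)$, both sides collapse to scalar multiples of $a:=1-\langle n_0,\bar n\rangle\ge0$, reducing it to $a(\tfrac{2}{\beta}-t)\ge\tfrac{2}{\beta}\,a$, i.e. $a\,t\le 0$. As $t>0$ this forces $a=0$, so $\bar n=n_0$ and $w=c_0+\ob n_0=z_0\in C$, contradicting $t>0$.

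The second nontrivial step is $(1)\Rightarrow(3)$, where the natural candidate is the ``eroded'' set $C_0:=\{x\mid B(x,\ob)\subseteq C\}=\bigcap_{\|u\|\le\ob}(C-u)$, a closed convex set, nonempty because $z-\ob n\in C_0$ for any $z\in\bdry C$ with unit normal $n$ by (1) (and $\interior C\neq\emptyset$ for the same reason). The inclusion $C_0+B(0,\ob)\subseteq C$ is immediate, so only $C\subseteq C_0+B(0,\ob)$ remains. Fix $y\in C$; if $B(y,\ob)\subseteq C$ then $y\in C_0$ already. Otherwise $r^\star:=\sup\{r\ge0\mid B(y,r)\subseteq C\}\in[0,\ob)$, and the maximal inscribed ball $B(y,r^\star)$ meets $\bdry C$ at some point $z$; since $B(y,r^\star)\subseteq C$, any supporting hyperplane of $C$ at $z$ is tangent to this ball, so $n:=(z-y)/r^\star$ (or, when $r^\star=0$, any unit normal at $y=z$) is the unit normal of $C$ at $z$. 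Applying (1) at $(z,n)$ gives $x_0:=z-\ob n\in C_0$, and $y-x_0=(\ob-r^\star)n$ has norm $\le\ob$, so $y\in C_0+B(0,\ob)$.

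I expect $(2)\Rightarrow(1)$ to be the main obstacle. Condition (2) is a first-order, co-coercivity-type statement about the normal map, whereas (1) is a genuinely global geometric claim (a whole ball of radius $\ob$ is contained in $C$); bridging this gap requires choosing the test point correctly, and the key trick is that \emph{maximizing} distance-to-$C$ over the candidate inscribed ball pins that point to the ball's boundary in the direction of its own projection's normal, which is exactly what makes (2) collapse as above. The remaining bookkeeping --- the degenerate $\beta\in\{0,\infty\}$ and $C=\R^d$ cases, existence of unit normals at boundary points (and their uniqueness, which in fact drops out of (2) by taking $z_1=z_2$), and that $\mathrm{dist}(\cdot,C)$ has the claimed gradient off $C$ --- is standard once the main argument is in hand.
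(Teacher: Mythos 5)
The paper does not prove this proposition; it cites it from~\cite{LiuGrimmer} (Proposition~3) and treats it as a known characterization of smooth sets, so there is no in-paper argument to compare against. Your blind proof, however, is correct and complete. The routine directions are handled as expected: $(1)\Rightarrow(2)$ by testing each ball-containment against the other normal and summing, and $(3)\Rightarrow(1)$ by forcing equality in Cauchy--Schwarz to pin the decomposition $z=x_0+\ob n$. The genuinely nontrivial step, $(2)\Rightarrow(1)$, is handled with a clean device: maximizing the (convex) distance-to-$C$ over the candidate inscribed ball $B(z_0-\ob n_0,\ob)$, using that a maximizer $w$ of a convex function over a convex compact set satisfies $\nabla\phi(w)\in N_K(w)$ to pin $w=c_0+\ob\bar n$, and then feeding the resulting geometry back into~(2), which collapses to $t\,(1-\langle n_0,\bar n\rangle)\le 0$ and forces $\bar n=n_0$, $w=z_0\in C$, contradicting $t>0$. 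For $(1)\Rightarrow(3)$ you correctly take $C_0$ to be the erosion $\bigcap_{\|u\|\le\ob}(C-u)$, and the argument that a maximal inscribed ball $B(y,r^\star)$ with $r^\star<\ob$ touches $\bdry C$ at a point $z$ whose unique unit normal is $(z-y)/r^\star$ is sound (including the degenerate $r^\star=0$ case, where any unit normal at $y$ works). A comparison against whatever proof appears in~\cite{LiuGrimmer} is not possible from the paper alone, but your distance-maximization trick for $(2)\Rightarrow(1)$ is the step most likely to differ in flavor from a presentation that proceeds through the support function or through the Minkowski-sum characterization~(3) directly; if anything, that route is the more transparent one, and worth keeping as-is.
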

\begin{proposition} \label{Prop:SCProperties}
\emph{(\!\!\cite[Theorem 2.1]{Goncharov2017})}
    For any \edit{nonempty} closed convex set $C$, the following are equivalent:
    \begin{enumerate}
        \item $C$ is $\alpha$-strongly convex.
        \item For any $z_1, z_2 \in \bdry C$, with unit normals $n_1 \in N_C(z_1)$ and $n_2 \in N_C(z_2)$,
        \begin{equation*}
                \langle z_1-z_2, n_1 - n_2 \rangle \leq \oa \| n_1 - n_2 \|^2 \ .
        \end{equation*}
        \item There exists a closed convex set $C_0$ such that $C_0 + C = B(0,\oa)$.
    \end{enumerate}
\end{proposition}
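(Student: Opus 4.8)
The plan is to prove the two equivalences $(1)\Leftrightarrow(2)$ and $(1)\Leftrightarrow(3)$ separately, disposing of the limiting cases $\alpha\in\{0,\infty\}$ by hand and otherwise assuming $0<\alpha<\infty$, in which case each of the three conditions forces $C$ to be a nonempty compact set (of diameter at most $\tfrac{2}{\alpha}$); this compactness is needed below so that farthest points and support points of $C$ exist.

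For $(1)\Rightarrow(2)$ I would argue directly: given $z_1,z_2\in\bdry C$ with unit normals $n_1,n_2$, strong convexity gives $z_2\in C\subseteq B(z_1-\tfrac{1}{\alpha}n_1,\tfrac{1}{\alpha})$, and expanding $\|z_2-z_1+\tfrac{1}{\alpha}n_1\|^2\le\tfrac{1}{\alpha^2}$ yields $\langle z_1-z_2,n_1\rangle\ge\tfrac{\alpha}{2}\|z_1-z_2\|^2$, and symmetrically with $n_2$; adding gives $\langle z_1-z_2,n_1-n_2\rangle\ge\alpha\|z_1-z_2\|^2$. Combined with Cauchy--Schwarz this forces $\alpha\|z_1-z_2\|\le\|n_1-n_2\|$, and feeding that back in gives $\langle z_1-z_2,n_1-n_2\rangle\le\|z_1-z_2\|\,\|n_1-n_2\|\le\tfrac{1}{\alpha}\|n_1-n_2\|^2$. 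The converse $(2)\Rightarrow(1)$ is the crux. Fixing $z\in\bdry C$, a unit normal $n$, and $p:=z-\tfrac{1}{\alpha}n$, I would take a farthest point $x^\star\in\argmax_{x\in C}\|x-p\|$ and write $r:=\|x^\star-p\|$; if $r>\tfrac{1}{\alpha}$, then $x^\star\in\bdry C$ and $m:=(x^\star-p)/r$ is a unit normal in $N_C(x^\star)$. Applying $(2)$ to the pairs $(z,n)$ and $(x^\star,m)$, substituting $m=\tfrac{1}{r}(x^\star-z+\tfrac{1}{\alpha}n)$, and using the identity $r^2=\|x^\star-z\|^2-\tfrac{2}{\alpha}\langle z-x^\star,n\rangle+\tfrac{1}{\alpha^2}$ to eliminate $\|x^\star-z\|$, the inequality collapses to $(\alpha r-1)\big(\langle z-x^\star,n\rangle+r-\tfrac{1}{\alpha}\big)\le0$. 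Since $\alpha r-1>0$ this would force $\langle z-x^\star,n\rangle\le\tfrac{1}{\alpha}-r<0$, contradicting $\langle z-x^\star,n\rangle\ge0$ (a normal-cone inequality). Hence $r\le\tfrac{1}{\alpha}$, i.e.\ $C\subseteq B(p,\tfrac{1}{\alpha})$, which is $(1)$.

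For the equivalence with $(3)$ I would work with the Minkowski difference $C_0:=\{y\mid y+C\subseteq B(0,\tfrac{1}{\alpha})\}=\bigcap_{x\in C}\big(B(0,\tfrac{1}{\alpha})-x\big)$, which is automatically closed and convex and satisfies $C_0+C\subseteq B(0,\tfrac{1}{\alpha})$ by construction. For $(1)\Rightarrow(3)$: for each unit $u$, pick a support point $z_u\in\argmax_{x\in C}\langle u,x\rangle$, so $u\in N_C(z_u)$; by $(1)$, $C\subseteq B(z_u-\tfrac{1}{\alpha}u,\tfrac{1}{\alpha})$, which rearranges exactly to $\tfrac{1}{\alpha}u-z_u\in C_0$, whence $\tfrac{1}{\alpha}u=(\tfrac{1}{\alpha}u-z_u)+z_u\in C_0+C$. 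Thus $C_0+C$ contains the whole sphere $\bdry B(0,\tfrac{1}{\alpha})$ and, being convex, contains $B(0,\tfrac{1}{\alpha})$, giving equality. For $(3)\Rightarrow(1)$: fix $z\in\bdry C$ with outward unit normal $n$ and write $\tfrac{1}{\alpha}n=c_0+c$ with $c_0\in C_0,\ c\in C$; since $n$ is normal to $B(0,\tfrac{1}{\alpha})$ at $\tfrac{1}{\alpha}n$, it is also normal to $C$ at $c$ and to $C_0$ at $c_0$, so $\langle z,n\rangle=h_C(n)=\langle c,n\rangle$, and then $\langle z+c_0,n\rangle=\tfrac{1}{\alpha}$ together with strict convexity of the ball forces $z+c_0=\tfrac{1}{\alpha}n$, i.e.\ $c_0=\tfrac{1}{\alpha}n-z\in C_0$. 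For any $x\in C$, $x+c_0\in B(0,\tfrac{1}{\alpha})$ reads $\|x-z+\tfrac{1}{\alpha}n\|\le\tfrac{1}{\alpha}$, i.e.\ $x\in B(z-\tfrac{1}{\alpha}n,\tfrac{1}{\alpha})$; hence $C\subseteq B(z-\tfrac{1}{\alpha}n,\tfrac{1}{\alpha})$.

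I expect the main obstacle to be the sufficiency direction $(2)\Rightarrow(1)$: a purely pairwise, essentially infinitesimal monotonicity condition on the normal map must be upgraded to a global ball-containment statement, and this hinges on choosing the comparison point $x^\star$ as a \emph{farthest} point and on the fortuitous factorization $(\alpha r-1)(\langle z-x^\star,n\rangle+r-\tfrac1\alpha)\le0$ that falls out of the quadratic identity for $r^2$. A secondary technical point is confirming that the stated conditions force $C$ to be bounded when $0<\alpha<\infty$ (needed for the existence of $x^\star$ and of support points): one rules out unbounded $C$, whose boundary admits arbitrarily far-apart points with uniformly separated normals, contradicting $(2)$. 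An alternative to $(2)\Rightarrow(1)$ would set up a polarity duality converting strong convexity of $C$ into $\beta$-smoothness of a polar set and then invoke \cref{Prop:SmoothProperties}, but this mainly relocates the difficulty into bookkeeping about polar sets and support functions.
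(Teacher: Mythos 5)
The paper does not prove this proposition; it is cited verbatim from \cite[Theorem 2.1]{Goncharov2017}, so there is no in-paper argument to compare against, and I am reviewing your proposal on its own terms. Your $(1)\Rightarrow(2)$ derivation (expand the ball containment at each point, symmetrize, feed Cauchy--Schwarz back in), your farthest-point argument for $(2)\Rightarrow(1)$ built on the factorization $(\alpha r-1)(\langle z-x^\star,n\rangle + r-\tfrac{1}{\alpha})\le 0$, and your Minkowski-difference argument for $(1)\Leftrightarrow(3)$ all check out algebraically, and the factorization trick is particularly clean.

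The genuine gap is in the step you labeled ``secondary.'' Condition $(2)$ alone does \emph{not} force $C$ to be bounded, so the farthest point $x^\star$ on which your $(2)\Rightarrow(1)$ argument pivots may fail to exist --- and in fact $(2)\Rightarrow(1)$ is \emph{false} for general closed convex sets. Take the slab $C=\{x\in\R^2:\ |x_1|\le 1\}$: its only boundary unit normals are $\pm e_1$, and for the one nontrivial pair $n_1=e_1,\ n_2=-e_1$ one computes $\langle z_1-z_2,n_1-n_2\rangle = 4 = \|n_1-n_2\|^2$, so $(2)$ holds with $\alpha=1$, yet the slab is unbounded and is not $1$-strongly convex; a half-space is an even simpler example, satisfying $(2)$ for every $\alpha$ since all its boundary normals coincide. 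Your proposed rationale --- that an unbounded $C$ admits far-apart boundary points with uniformly separated normals, blowing up $\langle z_1-z_2,n_1-n_2\rangle$ --- fails exactly here because $z_1-z_2$ can be arbitrarily long in a direction \emph{orthogonal} to $n_1-n_2$, leaving that inner product bounded. The cited theorem (like Vial's classical statement) carries a convex-body hypothesis, and that boundedness is precisely what your farthest-point step needs; the proposition as reproduced in the paper is, strictly read, overstated on the $(2)\Rightarrow(1)$ leg. Once boundedness is assumed your proof is complete, and note that the paper itself only ever invokes item $(3)$, whose equivalence with $(1)$ you establish correctly without needing this patch.
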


\noindent Many more equivalent characterizations of strongly convex sets are given in~\cite{Goncharov2017}. The following pair of lemmas will also help develop our theory.
\begin{lemma}\label{Lem:SmoothDiam}
    A set $C$ is $\beta$-smooth with $\diam(C) \leq D$ if and only if $C = C_0 + B(0,\ob)$ for some convex set $C_0$ with $\diam(C_0) \leq D-\frac{2}{\beta}$.
\end{lemma}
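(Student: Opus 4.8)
The ``$\beta$-smooth'' half of the claimed equivalence is already supplied by \cref{Prop:SmoothProperties}: a closed convex set $C$ is $\beta$-smooth if and only if $C = C_0 + B(0,\ob)$ for some closed convex set $C_0$. The only remaining work is to track the diameter through this decomposition, that is, to establish for every convex set $C_0$ the identity
\begin{equation*}
    \diam\bigl(C_0 + B(0,\ob)\bigr) = \diam(C_0) + \tfrac{2}{\beta}\ .
\end{equation*}
Granting this, the lemma follows at once. For the forward direction, apply \cref{Prop:SmoothProperties} to obtain a closed convex $C_0$ with $C = C_0 + B(0,\ob)$; the identity gives $\diam(C_0) = \diam(C) - \tfrac{2}{\beta} \le D - \tfrac{2}{\beta}$. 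For the reverse direction, \cref{Prop:SmoothProperties} gives that $C = C_0 + B(0,\ob)$ is $\beta$-smooth, and the identity gives $\diam(C) = \diam(C_0) + \tfrac{2}{\beta} \le D$.

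It thus suffices to prove the displayed identity, for which we may assume $C_0$ is closed: the diameter of a set equals that of its closure, and $\overline{C_0 + B(0,\ob)} = \overline{C_0} + B(0,\ob)$ since $B(0,\ob)$ is compact. (If $\diam(C_0) = \infty$ the identity is trivial, so we may also take $C_0$ bounded.) The inequality ``$\le$'' is the general fact that a Minkowski sum has diameter at most the sum of its summands' diameters: for $a_i \in C_0$ and $b_i \in B(0,\ob)$, the triangle inequality gives $\|(a_1+b_1)-(a_2+b_2)\| \le \|a_1-a_2\| + \|b_1-b_2\| \le \diam(C_0) + \tfrac{2}{\beta}$. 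For ``$\ge$'', the point is that the second summand is a \emph{ball}, whose diametral pair can be aligned with any direction: choosing $a_1, a_2 \in C_0$ with $\|a_1-a_2\|$ arbitrarily close to $\diam(C_0)$ and setting $u = (a_1-a_2)/\|a_1-a_2\|$, the points $a_1 + \ob u$ and $a_2 - \ob u$ both lie in $C_0 + B(0,\ob)$ and are at distance $\|a_1-a_2\| + \tfrac{2}{\beta}$; letting $\|a_1-a_2\| \to \diam(C_0)$ concludes.

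The only mild subtlety is the ``$\ge$'' direction above, which genuinely exploits that $B(0,\ob)$ is a ball---for general convex summands, diameters are merely subadditive under Minkowski sums, not additive---while everything else is bookkeeping on top of \cref{Prop:SmoothProperties}. The degenerate cases $\beta \in \{0,\infty\}$ are handled by the standing limiting conventions: for $\beta = \infty$ the ball collapses to $\{0\}$ and the statement is vacuous, whereas $\beta = 0$ only arises with $D = \infty$ (halfspaces).
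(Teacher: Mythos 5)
Your proof is correct and takes essentially the same approach as the paper: invoke \cref{Prop:SmoothProperties} for the decomposition $C = C_0 + B(0,\ob)$, then show $\diam(C_0 + B(0,\ob)) = \diam(C_0) + \tfrac{2}{\beta}$. The paper compresses the diameter identity into a single two-line sup computation, whereas you spell out the two inequalities and the alignment trick for ``$\ge$'' more carefully; the content is the same, and your closure/degenerate-case remarks are harmless extras (unnecessary since \cref{Prop:SmoothProperties} already produces a \emph{closed} convex $C_0$).
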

\begin{proof}
    We know from \cref{Prop:SmoothProperties} that $C$ is $\beta$-smooth if and only if there exists a convex set $C_0$ such that $C = C_0 + B(0,\ob)$. So the result follows as\belowdisplayskip=-12pt
    \begin{align*}
        \diam(C) & = \sup \{ \|x + \ob n_x - (y + \ob n_y) \| \mid x,y \in C_0, \|n_x\| \leq 1, \|n_y\| \leq 1 \} \\
        & = \sup \{ \|x - y\| \mid x,y \in C_0 \} + \frac{2}{\beta}  = \diam(C_0) + \frac{2}{\beta} \ .
    \end{align*} \myqed
\end{proof}

\begin{lemma}\label{Lem:SmoothSCParam}
    A set $C$ is $\alpha$-strongly convex and $\beta$-smooth if and only if $C = C_0 + B(0,\ob)$ for some $\gamma$-strongly convex set $C_0$, with $\gamma := (\oa - \ob)^{-1} = \frac{\alpha \beta}{\beta - \alpha}$.
\end{lemma}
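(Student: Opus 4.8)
The plan is to reduce the statement to the Minkowski-sum characterizations \cref{Prop:SmoothProperties}(3) and \cref{Prop:SCProperties}(3), together with the two elementary facts that $B(0,r)+B(0,s)=B(0,r+s)$ and that Minkowski addition by a ball is cancellative on compact convex sets. The degenerate parameter values ($\beta=\infty$, $\alpha=0$, or $\alpha=\beta$) are easily checked separately --- they reduce to tautologies, to \cref{Prop:SmoothProperties}, or to the fact that a set which is both $\alpha$-smooth and $\alpha$-strongly convex is exactly a ball of radius $\oa$ --- so I would assume $0<\alpha<\beta<\infty$, whence $\gamma=(\oa-\ob)^{-1}=\frac{\alpha\beta}{\beta-\alpha}\in(0,\infty)$ and $\og+\ob=\oa$. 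I would also note at the outset that any $\alpha$-strongly convex set with $\alpha>0$ is bounded, hence compact, so all the Minkowski sums appearing below are closed.

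For the ``if'' direction, suppose $C=C_0+B(0,\ob)$ with $C_0$ a $\gamma$-strongly convex set. Then $C$ is $\beta$-smooth immediately by \cref{Prop:SmoothProperties}(3). For strong convexity, \cref{Prop:SCProperties}(3) applied to $C_0$ produces a closed convex set $C_1$ with $C_1+C_0=B(0,\og)$, so
\[
C_1+C \;=\; (C_1+C_0)+B(0,\ob) \;=\; B(0,\og)+B(0,\ob) \;=\; B(0,\og+\ob) \;=\; B(0,\oa),
\]
and \cref{Prop:SCProperties} then shows $C$ is $\alpha$-strongly convex.

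For the ``only if'' direction, suppose $C$ is $\alpha$-strongly convex and $\beta$-smooth. By \cref{Prop:SmoothProperties}(3) there is a closed convex set $C_0$ with $C=C_0+B(0,\ob)$, and it remains to prove that this $C_0$ is $\gamma$-strongly convex. Since $C$ is $\alpha$-strongly convex, \cref{Prop:SCProperties}(3) yields a closed convex set $C_1$ with $C_1+C=B(0,\oa)$, i.e.\ $(C_1+C_0)+B(0,\ob)=B(0,\oa)$. Since also $B(0,\og)+B(0,\ob)=B(0,\oa)$, we get $(C_1+C_0)+B(0,\ob)=B(0,\og)+B(0,\ob)$; cancelling the common ball summand gives $C_1+C_0=B(0,\og)$, so \cref{Prop:SCProperties}(3) applied to $C_0$ shows $C_0$ is $\gamma$-strongly convex. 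This completes the proof.

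The one step beyond bookkeeping --- and the part I expect to need care --- is the cancellation $K_1+B(0,r)=K_2+B(0,r)\ \Rightarrow\ K_1=K_2$ for compact convex $K_1,K_2$. I would prove it via support functions: additivity $h_{A+B}=h_A+h_B$ and $h_{B(0,r)}=r\|\cdot\|$ turn the hypothesis into $h_{K_1}+r\|\cdot\|=h_{K_2}+r\|\cdot\|$, hence $h_{K_1}=h_{K_2}$, hence $K_1=K_2$ since a closed convex set is determined by its support function (compactness is what promotes equality of support functions to equality of the sets themselves rather than merely of their closures). Equivalently one could invoke R\aa{}dstr\"om's cancellation lemma directly. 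This cancellation is the set-level counterpart of adding the quadratic inequalities that define smooth and strongly convex functions, and it is the only nontrivial ingredient.
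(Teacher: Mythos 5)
Your proof follows exactly the paper's route: both directions go through property (3) of Propositions \ref{Prop:SmoothProperties} and \ref{Prop:SCProperties}, with the ``only if'' direction taking the summand $C_0$ from smoothness and showing it must be $\gamma$-strongly convex, and the ``if'' direction composing Minkowski sums. The one place you add value is that the paper silently writes ``and so $C_0^\mathtt{sm} + C_0^\mathtt{sc} = B(0,\og)$'' after obtaining $C_0^\mathtt{sm} + B(0,\ob) + C_0^\mathtt{sc} = B(0,\oa)$, which is precisely the cancellation $K_1 + B(0,r) = K_2 + B(0,r) \Rightarrow K_1 = K_2$; you correctly flag this as the nontrivial ingredient, justify it (support functions / R\aa{}dstr\"om, using that strong convexity makes $C$ compact so the summands are compact too), and also dispose of the degenerate parameter cases, neither of which the paper addresses explicitly.
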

\begin{proof}
    This follows from property 3 in both Propositions~\ref{Prop:SmoothProperties} and~\ref{Prop:SCProperties}.
    Namely, $C$ is $\alpha$-strongly convex and $\beta$-smooth if and only if there exist convex sets $C_0^\mathtt{sc}$ and $C_0^\mathtt{sm}$ such that $C + C_0^\mathtt{sc} = B(0,\oa)$ and $C = C_0^\mathtt{sm} + B(0,\ob)$. Together, these properties imply
    $C_0^\mathtt{sm} + B(0,\ob) + C_0^\mathtt{sc} = B(0,\oa)$,
    and so $C_0^\mathtt{sm} + C_0^\mathtt{sc} = B(0,\frac{1}{\gamma})$, i.e., $C_0^\mathtt{sm}$ is $\gamma$-strongly convex.
    Conversely, if $C = C_0 + B(0,\ob)$ for some $\gamma$-strongly convex $C_0$, it immediately follows that $C$ is $\beta$-smooth. Further some convex $C_0'$ must exist with $C_0+C_0' = B(0,\frac{1}{\gamma})$. Hence $C + C_0' = B(0,\oa)$, i.e., $C$ is $\alpha$-strongly convex. \myqed
\end{proof}

Finally, we define set interpolability, paralleling the function interpolability of \cref{Def:FuncInterpolability}. In addition to smoothness and strong convexity, we model two additional structures: diameter bounds and various \edit{interior point conditions}, which occur widely in constrained optimization theory.

We consider a set of observations $S$ defined by two types of data: points $z_i\in \R^d$ on the boundary of the constraint set with nonzero normal vectors $v_i\in \R^d$ and points $x_k\in \R^d$ in the $\delta_k$-interior of the set. Note the index sets $i\in \I$ and $k\in\K$ need not be of the same size or even both be nonempty. Given target strong convexity $\alpha$, smoothness $\beta$, and diameter $D$, we can formalize whether given observation data can be interpolated by some such set.
\begin{definition}\label{Def:SetInterpolability}
    Consider a set of observations $S=\SSetInterp$, with $z_i,v_i, x_k \in \R^d$ and $\delta_k \in \R_{\geq 0}$ for all $i \in \I$ and all $k \in \K$. The set $S$ is {\bf $\C_{\alpha,\beta,D}$-interpolable} if there exists $C \in \mathcal{C}_{\alpha,\beta,D}$ such that $z_i \in C$ and $v_i \in N_C(z_i)$ for all $i \in \I$ and $x_k \in \Interior{\delta_k} C$ for all $k \in \K$.
\end{definition}
\noindent Note that this definition allows $\delta_k = 0$ (denoting the corresponding $x_k$ simply must be a member of $C$) and all limiting values $\alpha,\beta,D \in\{0,\infty\}$.

This work's primary contribution is identifying computationally tractable, verifiable equivalent conditions for set interpolability. \edit{Our theory builds upon the interpolation theory over convex sets introduced in~~\cite[Theorem 3.6]{Interpolation2}. 

Before presenting our theory in \cref{Sec:Interpolation}, we consider a simplified setting where $\alpha = 0$ and $\beta = \infty$. In this case, interpolability corresponds to determining if a bounded diameter convex set exists with the required normals $v_i\in N_C(z_i)$ and $\delta_k$-interior points $x_k$.
When the interpolation conditions introduced in \cref{Sec:Interpolation} hold, a simple, explicit construction of the interpolating convex, bounded set exists: $C = \mathrm{conv}( \{z_i \}_{i \in \I}, \{B(x_k,\delta_k)\}_{k \in \K} )$.}
\cref{Fig:FunctionSetInterpolation} shows an example of this simplified case's construction. Designing and analyzing such a construction is the primary step in proving the sufficiency of given interpolation conditions. Our constructions grow substantially in complexity from this easy case, but remain explicit. As a result, one can always recover a (smooth, strongly convex, bounded diameter) set that exactly interpolates any observed normal vectors and interior points satisfying our theory's conditions.

\begin{figure}
    \centering
    \scalebox{0.45}{\input{Tikz_FunctionSetInterpolation}}
    \vspace{-7mm}
    \caption{Left: Function interpolation of function values $f_i$ and subgradients $g_i$ evaluated at $x_i$. Right: Set interpolation of boundary points $z_i$ with normals $v_i$ and $\delta_k$-interior points $x_k$.}
    \label{Fig:FunctionSetInterpolation}
\end{figure}

    \section{Main Result: Set Interpolation Theory} \label{Sec:Interpolation}
This section presents and proves our main results on set interpolability.

We first define a set of conditions that will be necessary throughout our theory. Consider a set of observations $S=\SSetInterp$ and denote the associated unit normal vectors by $n_i = \unitv{i}{}$ for all $i \in \I$.
Due to the added complexity of our set interpolation model, our conditions involve the introduction of auxiliary parameters $\{w_k\}_{k \in \K} \subset \R^d$ along with a constant factor \edit{$\lambda\in [\frac{1}{\sqrt{2}},1]$}, typically equal to one, enabling strengthened versions of these conditions.
We find that for any $\mathcal{C}_{\alpha,\beta,D}$-interpolable observation data $S=\SSetInterp$, there must exist auxiliary parameters $\{w_k\}_{k \in \K}$ \edit{and a choice of $\lambda$} such that for all $i,j \in \I$ and  $k,\el \in \K$,
\begin{align}
    & \|z_i-\oa n_i - (z_j - \ob n_j) \| \leq \og \tag{Interp1} \label{Eqn:Condition1} \\
    & \| z_i - \oa n_i - w_k \| \leq \og - s_k \tag{Interp2} \label{Eqn:Condition2} \\
    & \|x_k - w_k \| \leq \ob - \delta_k + s_k \tag{Interp3} \label{Eqn:Condition3} \\
    & \| z_i - \ob n_i - (z_j - \ob n_j) \| \leq \lambda(D-\frac{2}{\beta}) \tag{Interp4} \label{Eqn:Condition4} \\
    & \|z_i - \ob n_i - w_k \| \leq \lambda(D-\frac{2}{\beta}) - s_k \tag{Interp5} \label{Eqn:Condition5}  \\
    & \|w_k - w_\el \| \leq \lambda(D-\frac{2}{\beta}) - s_k - s_\el \tag{Interp6} \label{Eqn:Condition6}
\end{align}
where $\gamma = (\oa - \ob)^{-1}$ and $s_k = \max\{0,\delta_k - \ob \}$ for all $k \in \K$. In the case $\alpha = \beta$, we will use the convention that $\og = 0$.
As a shorthand, we write $S \in \mathrm{Interp}(\alpha, \beta, D; \lambda)$ if $\{w_k\}_{k \in \K} \subset \R^d$ exist satisfying all six conditions.

\edit{We can briefly motivate our auxiliary parameters $w_k$ and $\lambda$. The values $w_k$ arise from interpolating a smooth set, which can be decomposed into $C = C_0 + B(0,\og)$. Effectively, for each interior point $x_k \in C$, the point $w_k$ represents a corresponding point in $C_0$. The auxiliary parameter $\lambda$ arises from possible slack in bounding the diameter of a strongly convex set (See \cref{Rem:SCSlack}). Note that if a condition holds for some $\lambda$, then it holds for all $\hat{\lambda} > \lambda$. Lastly, we emphasize the normalized vectors $n_i$. In \cref{Sec:FW}, we will see that having simultaneous constraints on $v_i$ and $\frac{v_i}{\|v_i\|}$ results in more computationally difficult PEP formulations.}

\edit{If we simplify our set requirements, the conditions above simplify in turn. If one only enforces smoothness and strong convexity (i.e., $\K = \emptyset, D = \infty$), only a single condition \eqref{Eqn:Condition1} is needed, mirroring the single interpolation condition \eqref{Eqn:func-interp-constraint} of Taylor et al.~\cite{Interpolation}. The other conditions are only applicable depending on the constraints on the set structure. We summarize this dependency in~\cref{Tbl:SummaryTable}. In particular, the auxiliary variables $w_k$ are only necessary for smooth sets ($\beta < \infty$) with interior point conditions ($\K \neq \emptyset$); otherwise, the condition reduces to $x_k = w_k$.}

\begin{table}
    \centering\footnotesize
    \begin{tabular}{|c|c|c|c|c|c|}
    \hline
    \eqref{Eqn:Condition1} & \eqref{Eqn:Condition2} & \eqref{Eqn:Condition3} & \eqref{Eqn:Condition4} & \eqref{Eqn:Condition5} & \eqref{Eqn:Condition6} \\
    \hline
     & & $\beta< \infty$ & & $D < \infty$ & $D < \infty$ \\
    All settings & $\K \neq \emptyset$ & \textbf{and} & $D < \infty$ & \textbf{and} & \textbf{and} \\
     & & $\K \neq \emptyset$ & & $\K \neq \emptyset$ & $\K \neq \emptyset$ \\
    \hline
    \end{tabular}
    \caption{\edit{Summary of settings in which each interpolation condition is applicable.}
    }
    \label{Tbl:SummaryTable}
\end{table}

Our three main theorems below show that these conditions are always necessary for interpolation (\cref{Thm:Interpolation_Necc}); they become sufficient if the diameter bound is tightened by a factor of at most $\sqrt{2}$ (\cref{Thm:Interpolation_SuffWithSlack});
they are necessary and sufficient if either the strong convexity or diameter bound is omitted (\cref{Thm:Interpolation_NoSlack}). 
Immediately afterward, we discuss the necessity of this $\sqrt{2}$ gap between our first two theorems and provide several convenient corollaries. \cref{Fig:GeneralConstruction} presents sample constructions for convex, smooth, and strongly convex interpolating sets for the case where $\K$ is empty. These examples are special cases of the constructions used in our sufficiency proofs.

The following theorems, proven in the following subsections, consider a set of observations $S = \SSetInterp$. Recall $d$ denotes the dimension of space being considered, i.e., $z_i,v_i,x_k \in \R^d$. 
\begin{theorem} \label{Thm:Interpolation_Necc}
    If $S$ is $\mathcal{C}_{\alpha,\beta,D}$-interpolable, then $S \in \mathrm{Interp}(\alpha, \beta, D; 1)$.
\end{theorem}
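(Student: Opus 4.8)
The plan is to reduce everything to the Minkowski decomposition of a smooth set. Fix any interpolating $C \in \mathcal{C}_{\alpha,\beta,D}$ and set $n_i = v_i/\|v_i\|$. By \cref{Prop:SmoothProperties} and \cref{Lem:SmoothDiam}, write $C = C_0 + B(0,\ob)$ for a closed convex set $C_0$ with $\diam(C_0) \le D - \frac{2}{\beta}$ (equivalently $C_0 = C \ominus B(0,\ob)$; by \cref{Lem:SmoothSCParam} one may also take $C_0$ to be $\gamma$-strongly convex, though this is not needed). The limiting parameter values are handled by the stated conventions: if $D = \infty$, conditions \eqref{Eqn:Condition4}--\eqref{Eqn:Condition6} are vacuous; if $\alpha = 0$, then $\og = \infty$ and conditions \eqref{Eqn:Condition1}--\eqref{Eqn:Condition2} are vacuous; if $\beta = \infty$, then $C_0 = C$; if $\alpha = \beta$, then $\og = 0$ and $C_0$ is a singleton. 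In each of these cases the argument below only simplifies.

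The single geometric tool is an elementary erosion estimate, which I would establish first via a supporting-hyperplane argument: if $B(x,r) \subseteq C_0 + B(0,\rho)$ for closed convex $C_0$, then $\mathrm{dist}(x,C_0) \le \max\{0,\rho-r\}$, and moreover if $r \ge \rho$ then $B(x,r-\rho) \subseteq C_0$. (For the latter: if some $y$ with $\|y-x\| \le r-\rho$ were outside $C_0$, let $w = \proj_{C_0}(y)$ and $u = (y-w)/\|y-w\|$; then $y + \rho u \in B(x,r) \subseteq C_0 + B(0,\rho)$, so $y+\rho u = c' + \rho u'$ with $c' \in C_0$, $\|u'\|\le 1$, whence $\langle u, y\rangle + \rho = \langle u, c'\rangle + \rho\langle u,u'\rangle \le \langle u, w\rangle + \rho$, contradicting $\langle u, y\rangle = \langle u, w\rangle + \|y-w\|$; the distance claim is the same computation applied to $x$ itself.) One then applies this three times. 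First, $\beta$-smoothness of $C$ gives $B(z_i - \ob n_i, \ob) \subseteq C = C_0 + B(0,\ob)$, so $\zeta_i := z_i - \ob n_i \in C_0$ for each $i$. Second, $\alpha$-strong convexity gives $C \subseteq B(z_i - \oa n_i, \oa)$, and since $c + \ob u \in C$ for every $c \in C_0$ and unit $u$, choosing $u$ to point away from $z_i - \oa n_i$ yields $C_0 \subseteq B(z_i - \oa n_i, \oa - \ob) = B(z_i - \oa n_i, \og)$. Third, $B(x_k,\delta_k) \subseteq C = C_0 + B(0,\ob)$: if $\delta_k \ge \ob$ set $w_k := x_k$ and obtain $B(w_k, s_k) \subseteq C_0$; if $\delta_k < \ob$ set $w_k := \proj_{C_0}(x_k)$, so $\|x_k - w_k\| = \mathrm{dist}(x_k, C_0) \le \ob - \delta_k$ and $B(w_k, s_k) = \{w_k\} \subseteq C_0$ since $s_k = 0$. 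In both cases $B(w_k, s_k) \subseteq C_0$ and $\|x_k - w_k\| \le \ob - \delta_k + s_k$, which is precisely \eqref{Eqn:Condition3}.

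The remaining five conditions are then triangle-inequality bookkeeping from these three containments, using that (i) $B(a,s) \subseteq B(b,R)$ forces $\|a-b\| \le R-s$ and (ii) a point and/or balls $B(a,s), B(a',s')$ all contained in a set of diameter $\le \Delta$ force $\|a-a'\| + s + s' \le \Delta$. Explicitly: \eqref{Eqn:Condition1} follows from $\zeta_j \in C_0 \subseteq B(z_i - \oa n_i, \og)$; \eqref{Eqn:Condition2} from $B(w_k,s_k) \subseteq C_0 \subseteq B(z_i - \oa n_i, \og)$; \eqref{Eqn:Condition4} from $\zeta_i, \zeta_j \in C_0$ with $\diam(C_0) \le D - \frac{2}{\beta}$; \eqref{Eqn:Condition5} from $\zeta_i \in C_0$ and $B(w_k, s_k) \subseteq C_0$; and \eqref{Eqn:Condition6} from $B(w_k,s_k), B(w_\el,s_\el) \subseteq C_0$. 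Since $\lambda = 1$, no slack is consumed anywhere, so this gives $S \in \mathrm{Interp}(\alpha,\beta,D;1)$.

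This direction is not deep, so the ``main obstacle'' is organizational: phrasing the erosion lemma generally enough to cover all three uses at once, and recognizing that the case split $\delta_k \ge \ob$ versus $\delta_k < \ob$—which is exactly why the quantities $s_k = \max\{0,\delta_k - \ob\}$ and the auxiliary centers $w_k$ must be introduced in the first place—is benign. One must also track carefully which conditions become vacuous or tight at the limiting parameter values so the conventions ($\og = 0$ when $\alpha = \beta$, and so on) keep every inequality meaningful. No interpolating set needs to be constructed for this (necessity) direction; that work is reserved for the sufficiency theorems.
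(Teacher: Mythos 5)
Your proof is correct and follows the same top-level plan as the paper: decompose $C = C_0 + B(0,\ob)$, place $\zeta_i := z_i - \ob n_i$ and the auxiliary balls $B(w_k, s_k)$ inside $C_0$, and read off \eqref{Eqn:Condition1}--\eqref{Eqn:Condition6} from the two facts $C_0 \subseteq B(z_i - \oa n_i, \og)$ and $\diam(C_0) \le D - \frac{2}{\beta}$. Your choice of $w_k$ and your case split agree with the paper's: the paper branches on $x_k \in C_0$ versus $x_k \notin C_0$ while you branch on $\delta_k \ge \ob$ versus $\delta_k < \ob$, but these produce the identical point $w_k$ (when $\delta_k < \ob$ and $x_k \in C_0$, your projection returns $x_k$, which is the paper's Case-1 choice). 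The one genuine technical divergence is how you obtain the ball containment $C_0 \subseteq B(z_i - \oa n_i, \og)$. The paper first establishes the normality fact \eqref{Eqn:vi-inner-normality}, namely $v_i \in N_{C_0}(z_i - \ob n_i)$, and then invokes the $\gamma$-strong convexity of $C_0$ furnished by \cref{Lem:SmoothSCParam}. You instead argue directly from the $\alpha$-strong convexity of $C$: every $c \in C_0$ has $c + \ob u \in C \subseteq B(z_i - \oa n_i, \oa)$ for all unit $u$, and choosing $u$ to point away from the center shrinks the admissible radius for $c$ to $\oa - \ob = \og$. This is slightly more elementary in that it never needs to observe $C_0$ is $\gamma$-strongly convex, and the erosion lemma you front-load is a clean, reusable packaging of what the paper does ad hoc across its two cases. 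Both routes are correct and of comparable length; the paper's version has the modest advantage of making the intermediate normality statement explicit, which parallels the construction used later in the sufficiency proofs.
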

\begin{theorem} \label{Thm:Interpolation_SuffWithSlack}
    If $S \in \mathrm{Interp}(\alpha,\beta,D; \sqrt{\frac{d+1}{2d}})$, then $S$ is $\mathcal{C}_{\alpha,\beta,D}$-interpolable.
    In particular, if $S \in \mathrm{Interp}(\alpha,\beta,D; \frac{1}{\sqrt{2}})$, then $S$ is $\mathcal{C}_{\alpha,\beta,D}$-interpolable.
\end{theorem}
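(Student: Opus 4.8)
The plan is to explicitly construct an interpolating set $C\in\mathcal{C}_{\alpha,\beta,D}$ out of the data $S$ and the auxiliary points $\{w_k\}_{k\in\K}$ whose existence is guaranteed by $S\in\mathrm{Interp}(\alpha,\beta,D;\lambda)$. The structural backbone is \cref{Lem:SmoothSCParam} together with \cref{Lem:SmoothDiam}: it suffices to build a ``core'' set $C_0$ that is $\gamma$-strongly convex with $\diam(C_0)\le D-\frac{2}{\beta}$ and set $C:=C_0+B(0,\ob)$, since this automatically yields $\beta$-smoothness, $\alpha$-strong convexity, and $\diam(C)=\diam(C_0)+\frac{2}{\beta}\le D$. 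Writing $n_i=v_i/\|v_i\|$, the relevant erosion dictionary is: a boundary point $z_i$ of $C$ with normal $n_i$ corresponds to $\tilde z_i:=z_i-\ob n_i$ lying on $\bdry C_0$ with the same normal $n_i$; and $B(x_k,\delta_k)\subseteq C$ reduces to finding $w_k$ with $B(w_k,s_k)\subseteq C_0$ and $\|x_k-w_k\|\le\ob-\delta_k+s_k$, which is exactly the purpose of $w_k$ and $s_k=\max\{0,\delta_k-\ob\}$ in \eqref{Eqn:Condition3}.

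I would construct $C_0$ as an intersection of Euclidean balls. For each $i\in\I$, use $B(z_i-\oa n_i,\og)$; since $\oa=\ob+\og$, the point $\tilde z_i$ sits precisely on this sphere with outward normal $n_i$, so intersecting over $i\in\I$ forces $C_0$ to realize every prescribed normal --- provided each $\tilde z_i$ and each $B(w_k,s_k)$ lies inside every such ball, which is exactly \eqref{Eqn:Condition1} and \eqref{Eqn:Condition2}. To control the diameter, intersect in addition with one ball $B(p,\rho)$ that contains the whole set $A:=\{\tilde z_i\}_{i\in\I}\cup\bigcup_{k\in\K}B(w_k,s_k)$; conditions \eqref{Eqn:Condition4}--\eqref{Eqn:Condition6} say exactly $\diam(A)\le\lambda(D-\frac{2}{\beta})$. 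With this $C_0$ in hand the verification is bookkeeping: $\tilde z_i\in C_0\subseteq B(z_i-\oa n_i,\og)$ with $\tilde z_i$ on the sphere gives $\tilde z_i\in\bdry C_0$ and $n_i\in N_{C_0}(\tilde z_i)$, and taking the Minkowski sum with $B(0,\ob)$ (which shifts each supporting hyperplane outward by $\ob$ and preserves normals) lifts this to $z_i\in\bdry C$ with $n_i$, hence $v_i$, normal; $B(w_k,s_k)\subseteq C_0$ gives $B(w_k,s_k+\ob)\subseteq C$, and then \eqref{Eqn:Condition3} yields $B(x_k,\delta_k)\subseteq C$ in both regimes $\delta_k\lessgtr\ob$; and $\diam(C_0)\le 2\rho$.

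The one quantitative step --- and the source of the $\sqrt{2}$ slack --- is the choice of $\rho$. Here I would invoke Jung's theorem: any subset of $\R^d$ of diameter $\Delta$ lies in a ball of radius $\Delta\sqrt{\frac{d}{2(d+1)}}$. Applied to $A$ with $\Delta\le\lambda(D-\frac{2}{\beta})$ and $\lambda=\sqrt{\frac{d+1}{2d}}$, this produces $B(p,\rho)$ with $\rho\le\lambda\sqrt{\frac{d}{2(d+1)}}\,(D-\frac{2}{\beta})=\half(D-\frac{2}{\beta})$, so $\diam(C)\le 2\rho+\frac{2}{\beta}\le D$; the dimension-free statement follows since $\frac{1}{\sqrt2}\le\sqrt{\frac{d+1}{2d}}$ makes $\mathrm{Interp}(\alpha,\beta,D;\frac{1}{\sqrt2})\subseteq\mathrm{Interp}(\alpha,\beta,D;\sqrt{\frac{d+1}{2d}})$. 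I expect the main obstacle to be reconciling $\gamma$-strong convexity of $C_0$ with the extra ball $B(p,\rho)$: an intersection of radius-$r$ balls is only $\frac1r$-strongly convex, so this requires $\rho\le\og$, which forces restricting attention to the regime where the diameter bound is genuinely active (roughly $\frac{2}{\beta}\le D\le\frac{2}{\alpha}$ when $\alpha>0$), with degenerate cases --- $D=\infty$ (drop $B(p,\rho)$), $\alpha=0$ (balls become halfspaces, recovering \cref{Thm:ConvexInterp}), $\alpha=\beta$ ($C_0$ collapses to a point), and $\I=\emptyset$ --- treated separately and as limiting cases.
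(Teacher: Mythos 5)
Your proposal is correct and takes essentially the same route as the paper's proof: build $C_0$ as an intersection of balls of radius $\og$, set $C = C_0 + B(0,\ob)$ so \cref{Lem:SmoothSCParam} and \cref{Lem:SmoothDiam} supply strong convexity, smoothness, and the diameter transfer, verify the prescribed normals and $\delta_k$-interior points from \eqref{Eqn:Condition1}--\eqref{Eqn:Condition3}, and invoke Jung's theorem on $\{z_i-\ob n_i\}_{i\in\I}\cup\{B(w_k,s_k)\}_{k\in\K}$ to bound $\diam(C_0)$ by $D-\tfrac{2}{\beta}$, which is exactly where $\sqrt{(d+1)/(2d)}$ enters. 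The only cosmetic difference is that the paper takes $C_0$ to be the full $\gamma$-strongly convex hull $\bigcap_{y\in Y}B(y,\og)$ over the admissible-center set $Y$ (handling the large-$D$ regime automatically via $\diam(C_0)\le\tfrac{2}{\gamma}$) and then shows the Jung ball contains it, whereas you explicitly add the Jung ball to a finite intersection defining $C_0$; both produce the same diameter bound and the same verification of the interpolation conditions.
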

\begin{theorem} \label{Thm:Interpolation_NoSlack}
    If $\alpha=0$ or $D \geq \frac{2}{\alpha}$, then $S$ is $\mathcal{C}_{\alpha,\beta,D}$-interpolable if and only if $S \in \mathrm{Interp}(\alpha, \beta, D; 1)$.
\end{theorem}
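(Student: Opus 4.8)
The plan is to note that the ``only if'' direction is exactly \cref{Thm:Interpolation_Necc}, and to prove ``if'' by specializing the sufficiency construction behind \cref{Thm:Interpolation_SuffWithSlack}. The reason a slack factor $\lambda<1$ was needed there is that building a \emph{bounded} strongly convex interpolating set forces a collection of points of diameter $\approx\lambda D$ to be enclosed inside a ball of prescribed radius, and by a Jung-type bound this enclosure costs the factor $\sqrt{(d+1)/(2d)}$. Under the hypothesis $\alpha=0$ or $D\geq\frac{2}{\alpha}$ that enclosure step is never invoked, so $\lambda=1$ is admissible. I would split into these two cases.

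\emph{Case $\alpha=0$.} By \cref{Lem:SmoothDiam} it suffices to build a convex $C_0$ with $\diam(C_0)\leq D-\frac{2}{\beta}$ for which $C:=C_0+B(0,\ob)$ interpolates $S$; then $C$ is automatically $0$-strongly convex, $\beta$-smooth, and of diameter at most $D$. The natural choice is $C_0=\mathrm{conv}\big(\{z_i-\ob n_i\}_{i\in\I}\cup\{B(w_k,s_k)\}_{k\in\K}\big)$. When $\alpha=0$, conditions \eqref{Eqn:Condition1} and \eqref{Eqn:Condition2} must be read as $\alpha\downarrow 0$ limits; expanding the squared norms shows \eqref{Eqn:Condition1} becomes $\langle n_i,(z_j-\ob n_j)-(z_i-\ob n_i)\rangle\leq 0$ (equivalently the $\beta$-smoothness inequality of \cref{Prop:SmoothProperties}) and \eqref{Eqn:Condition2} becomes $\langle n_i, w_k-(z_i-\ob n_i)\rangle\leq -s_k$. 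Together with \eqref{Eqn:Condition3}--\eqref{Eqn:Condition6}, these become precisely the convex-interpolation conditions of \cref{Thm:ConvexInterp} for the shifted data, and I would verify directly that: the hyperplane $\{y:\langle y-(z_i-\ob n_i),n_i\rangle=0\}$ supports every generator of $C_0$, so $z_i-\ob n_i\in\bdry C_0$ with normal $n_i$, hence $z_i\in\bdry C$ with $v_i\in N_C(z_i)$; $\diam(C_0)$, being the supremum of pairwise distances among the generating points and balls, is bounded by $D-\frac{2}{\beta}$ via \eqref{Eqn:Condition4}--\eqref{Eqn:Condition6}; and $x_k\in\Interior{\delta_k}C$ since \eqref{Eqn:Condition3} places $B(x_k,\delta_k)$ inside $B(w_k,s_k)+B(0,\ob)\subseteq C$.

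\emph{Case $D\geq\frac{2}{\alpha}$ with $\alpha>0$ (in particular $D=\infty$).} Every $\alpha$-strongly convex set has diameter at most $\frac{2}{\alpha}\leq D$, so $\mathcal{C}_{\alpha,\beta,D}=\mathcal{C}_{\alpha,\beta,\infty}$; and discarding \eqref{Eqn:Condition4}--\eqref{Eqn:Condition6} gives $S\in\mathrm{Interp}(\alpha,\beta,\infty;1)$, so it suffices to build an $\alpha$-strongly convex, $\beta$-smooth set from conditions \eqref{Eqn:Condition1}--\eqref{Eqn:Condition3} alone. Using $\ob+\og=\oa$, I would set $C_0:=\bigcap_{i\in\I}B\big(z_i-\oa n_i,\og\big)$---the $\gamma$-strongly convex hull of the shifted boundary data---and $C:=C_0+B(0,\ob)$, which is $\alpha$-strongly convex and $\beta$-smooth by \cref{Lem:SmoothSCParam}. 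Condition \eqref{Eqn:Condition1} says each $z_i-\ob n_i$ lies on the sphere bounding its own ball $B(z_i-\oa n_i,\og)$ while lying inside every other such ball, so $z_i-\ob n_i\in\bdry C_0$ with normal $n_i$, giving $z_i\in\bdry C$ with $v_i\in N_C(z_i)$; condition \eqref{Eqn:Condition2} makes $B(w_k,s_k)\subseteq C_0$, and \eqref{Eqn:Condition3} then yields $x_k\in\Interior{\delta_k}C$. No shrinking step is needed, since $\diam(C)\leq\frac{2}{\alpha}\leq D$ for free, so no $\sqrt{2}$-type loss appears.

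\emph{Main obstacle.} The difficulty is not writing down the construction but verifying it realizes $S$ \emph{exactly}---that each $z_i$ lands on the \emph{boundary} of $C$ (not its interior) with $v_i$ a genuine normal, and that interior points attain precisely their depths $\delta_k$---together with the degenerate regimes: $\alpha=\beta$ (so $\og=0$, $C_0$ a singleton, $C$ a ball); $\alpha=0$ (where \eqref{Eqn:Condition1}--\eqref{Eqn:Condition2} are genuine limits and the squared-norm expansions must be handled carefully); and $\I=\emptyset$ or $\K=\emptyset$, where one needs a separate argument that the relevant (strongly) convex hull of only the interior-point data exists and has the required diameter. These are exactly the boundary cases of the \cref{Thm:Interpolation_SuffWithSlack} constructions, so the bulk of the verification can be inherited from that proof, with the $\lambda$-dependent enclosure estimate simply dropped.
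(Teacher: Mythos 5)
Your proposal is correct and follows essentially the same two-case decomposition and constructions as the paper: the ``only if'' direction via \cref{Thm:Interpolation_Necc}; for $\alpha=0$, the convex hull $C_0 = \mathrm{conv}(\{z_i-\ob n_i\},\{B(w_k,s_k)\})$ with the limiting forms of \eqref{Eqn:Condition1}--\eqref{Eqn:Condition2}; and for $D\geq\frac{2}{\alpha}$, reusing the strongly-convex construction and observing that $\diam(C)\leq\frac{2}{\alpha}\leq D$ for free. One small remark on your explicit $C_0=\bigcap_{i\in\I}B(z_i-\oa n_i,\og)$ in the second case (the paper instead writes $C_0=\bigcap_{y\in Y}B(y,\og)$): if $\I=\emptyset$ but $\K\neq\emptyset$, that intersection is all of $\R^d$, so you must also intersect with the balls $B(w_k,\og-s_k)$ (or equivalently use the paper's $Y$) to keep $C_0$ bounded and $\gamma$-strongly convex---you flag this edge case, so it is a matter of filling in a detail rather than a missing idea.
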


\begin{figure}
    \centering
    \scalebox{0.42}{\input{Tikz_GeneralSetConstruction}}
    \caption{Examples of constructed interpolations with no \edit{required interior points} ($\K=\emptyset$).}
    \label{Fig:GeneralConstruction}
\end{figure}

\edit{
\begin{remark}[On Indicator Function Formulation]
    Instead of using boundary points and normal vectors, one can equivalently define set interpolation in terms of indicator functions. For any set $C$ and indicator $\iota_C = \begin{cases} 0 &\text{if } x \in C \\
        \infty & \text{otherwise},
        \end{cases}$
    one has that $v_i \in N_C(z_i)$ if and only if $v_i \in \partial \iota_C(z_i)$. Similarly, one can define a $\delta$-interior point as some $x$ such that $\iota_C(x+\delta \zeta) = 0$ for all $\|\zeta\| \leq 1$. We focus on the normal-vector formulation to align with our later applications.
\end{remark}
}

\begin{remark}[On the Computational Cost of Auxiliary Variables]
    The function interpolation result of \cref{Thm:func-interp} provides a direct means to algebraically check if a collection of observation data $\{(x_i,f_i,g_i)\}_{i \in \I}$ is interpolable. One must check $O(|\I|^2)$ quadratic inequalities. Our set interpolation theory does not present such an easy algebraic check. One must determine whether $\{w_k\}_{k \in \K}$ exist satisfying the needed conditions. This corresponds to solving a second-order cone program with $O(|\K|)$ variables and $O(|\I|^2+|\K|^2)$ constraints. \edit{However, in cases with no or very few interior point conditions ($|\K|=0$ or small), or where $\beta = \infty$, this cost reduces back to checking $O(|\I|^2)$ inequalities.}
\end{remark}
\begin{remark}[On the Tightness of the Gap Between Theorems~\ref{Thm:Interpolation_Necc} and~\ref{Thm:Interpolation_SuffWithSlack}] \label{Rem:SCSlack}
    The gap between $\lambda=1$ and $\lambda=\sqrt{\frac{d+1}{2d}}\geq 1/\sqrt{2}$ in Theorems~\ref{Thm:Interpolation_Necc} and~\ref{Thm:Interpolation_SuffWithSlack} is fundamental: Consider the regular $d$-simplex in $\R^{d+1}$, with $d>1$. Placing the centroid at 0, our vertices become $(\frac{d}{d+1}, \frac{-1}{d+1}, \dots, \frac{-1}{d+1})$, $(\frac{-1}{d+1}, \frac{d}{d+1}, \frac{-1}{d+1} \dots, \frac{-1}{d+1})$, etc. Taking $\I = [0\mathrm{\,:\,}d]$, we choose $\{z_i\}_{i \in \I}$ to be these $d+1$ vertices and $v_i = \unit{z_i}$ for all $i \in \I$. Further, set $\K = \emptyset$, \edit{$S = (\{(z_i,v_i)\}_{i \in \I}, \emptyset )$}, \edit{and $\alpha = \sqrt{\frac{d+1}{d}}$, $\beta = \infty$, $D = \sqrt{2}$.} Observing that
    \begin{equation*}
        \edit{
        \|z_i - \oa n_i - (z_j - \ob n_j)\| = \| z_j \| = \sqrt{\frac{d}{d+1}} \ , \qquad \qquad \|z_i - z_j \| = \sqrt{2}
        }
    \end{equation*}
    for all $i\neq j$, \eqref{Eqn:Condition1} and \eqref{Eqn:Condition4} are satisfied \edit{with $\lambda=1$}, so $S \in \mathrm{Interp}(\alpha,\beta,\sqrt{2}; 1)$.
    \edit{However, since all $z_i$ satisfy $\|z_i\| = \oa$, $B(0,\oa)$ is the unique $\alpha$-strongly convex (and vacuously $\beta$-smooth) set that interpolates $S$.} So  $S$ is not $\C_{\alpha,\beta,\sqrt{2}}$-interpolable as any interpolating set must have diameter $2\sqrt{\frac{d}{d+1}} > \sqrt{2}$, exactly matching the gap in our theorems. \edit{See \cref{Fig:EquilateralTriangle} demonstrating a similar example in $\R^2$}.
\end{remark}

\begin{figure}
    \centering
    \scalebox{0.7}{\tikzset{every picture/.style={line width=0.75pt}} %set default line width to 0.75pt        

\begin{tikzpicture}[x=0.75pt,y=0.75pt,yscale=-1,xscale=1]
%uncomment if require: \path (0,288); %set diagram left start at 0, and has height of 288

%Shape: Circle [id:dp10105298679157881] 
\draw  [fill={rgb, 255:red, 0; green, 0; blue, 0 }  ,fill opacity=1 ] (147.5,80) .. controls (147.5,78.62) and (148.62,77.5) .. (150,77.5) .. controls (151.38,77.5) and (152.5,78.62) .. (152.5,80) .. controls (152.5,81.38) and (151.38,82.5) .. (150,82.5) .. controls (148.62,82.5) and (147.5,81.38) .. (147.5,80) -- cycle ;
%Shape: Circle [id:dp9651376467119404] 
\draw  [fill={rgb, 255:red, 0; green, 0; blue, 0 }  ,fill opacity=1 ] (222.5,209.9) .. controls (222.5,208.52) and (223.62,207.4) .. (225,207.4) .. controls (226.38,207.4) and (227.5,208.52) .. (227.5,209.9) .. controls (227.5,211.28) and (226.38,212.4) .. (225,212.4) .. controls (223.62,212.4) and (222.5,211.28) .. (222.5,209.9) -- cycle ;
%Shape: Circle [id:dp45993681040230694] 
\draw  [fill={rgb, 255:red, 0; green, 0; blue, 0 }  ,fill opacity=1 ] (72.5,209.9) .. controls (72.5,208.52) and (73.62,207.4) .. (75,207.4) .. controls (76.38,207.4) and (77.5,208.52) .. (77.5,209.9) .. controls (77.5,211.28) and (76.38,212.4) .. (75,212.4) .. controls (73.62,212.4) and (72.5,211.28) .. (72.5,209.9) -- cycle ;
%Straight Lines [id:da8215196509308107] 
\draw    (225,209.9) -- (260.77,230.6) ;
\draw [shift={(262.5,231.6)}, rotate = 210.06] [color={rgb, 255:red, 0; green, 0; blue, 0 }  ][line width=0.75]    (10.93,-3.29) .. controls (6.95,-1.4) and (3.31,-0.3) .. (0,0) .. controls (3.31,0.3) and (6.95,1.4) .. (10.93,3.29)   ;
%Straight Lines [id:da5941791832101737] 
\draw    (150,80) -- (150,42) ;
\draw [shift={(150,40)}, rotate = 90] [color={rgb, 255:red, 0; green, 0; blue, 0 }  ][line width=0.75]    (10.93,-3.29) .. controls (6.95,-1.4) and (3.31,-0.3) .. (0,0) .. controls (3.31,0.3) and (6.95,1.4) .. (10.93,3.29)   ;
%Straight Lines [id:da5376907307454553] 
\draw    (75,209.9) -- (38.93,230.6) ;
\draw [shift={(37.2,231.6)}, rotate = 330.14] [color={rgb, 255:red, 0; green, 0; blue, 0 }  ][line width=0.75]    (10.93,-3.29) .. controls (6.95,-1.4) and (3.31,-0.3) .. (0,0) .. controls (3.31,0.3) and (6.95,1.4) .. (10.93,3.29)   ;
%Shape: Circle [id:dp8555674304885099] 
\draw   (336.2,166.6) .. controls (336.2,118.77) and (374.97,80) .. (422.8,80) .. controls (470.63,80) and (509.4,118.77) .. (509.4,166.6) .. controls (509.4,214.43) and (470.63,253.2) .. (422.8,253.2) .. controls (374.97,253.2) and (336.2,214.43) .. (336.2,166.6) -- cycle ;
%Shape: Circle [id:dp49208941280418483] 
\draw  [fill={rgb, 255:red, 0; green, 0; blue, 0 }  ,fill opacity=1 ] (420.3,80) .. controls (420.3,78.62) and (421.42,77.5) .. (422.8,77.5) .. controls (424.18,77.5) and (425.3,78.62) .. (425.3,80) .. controls (425.3,81.38) and (424.18,82.5) .. (422.8,82.5) .. controls (421.42,82.5) and (420.3,81.38) .. (420.3,80) -- cycle ;
%Shape: Circle [id:dp40277466265896433] 
\draw  [fill={rgb, 255:red, 0; green, 0; blue, 0 }  ,fill opacity=1 ] (495.3,209.9) .. controls (495.3,208.52) and (496.42,207.4) .. (497.8,207.4) .. controls (499.18,207.4) and (500.3,208.52) .. (500.3,209.9) .. controls (500.3,211.28) and (499.18,212.4) .. (497.8,212.4) .. controls (496.42,212.4) and (495.3,211.28) .. (495.3,209.9) -- cycle ;
%Shape: Circle [id:dp33008710220238524] 
\draw  [fill={rgb, 255:red, 0; green, 0; blue, 0 }  ,fill opacity=1 ] (345.3,209.9) .. controls (345.3,208.52) and (346.42,207.4) .. (347.8,207.4) .. controls (349.18,207.4) and (350.3,208.52) .. (350.3,209.9) .. controls (350.3,211.28) and (349.18,212.4) .. (347.8,212.4) .. controls (346.42,212.4) and (345.3,211.28) .. (345.3,209.9) -- cycle ;
%Straight Lines [id:da21847427098211925] 
\draw    (497.8,209.9) -- (533.57,230.6) ;
\draw [shift={(535.3,231.6)}, rotate = 210.06] [color={rgb, 255:red, 0; green, 0; blue, 0 }  ][line width=0.75]    (10.93,-3.29) .. controls (6.95,-1.4) and (3.31,-0.3) .. (0,0) .. controls (3.31,0.3) and (6.95,1.4) .. (10.93,3.29)   ;
%Shape: Circle [id:dp5376028915873048] 
\draw  [fill={rgb, 255:red, 0; green, 0; blue, 0 }  ,fill opacity=1 ] (420.3,166.6) .. controls (420.3,165.22) and (421.42,164.1) .. (422.8,164.1) .. controls (424.18,164.1) and (425.3,165.22) .. (425.3,166.6) .. controls (425.3,167.98) and (424.18,169.1) .. (422.8,169.1) .. controls (421.42,169.1) and (420.3,167.98) .. (420.3,166.6) -- cycle ;
%Straight Lines [id:da5338111179485074] 
\draw    (422.8,80) -- (422.8,42) ;
\draw [shift={(422.8,40)}, rotate = 90] [color={rgb, 255:red, 0; green, 0; blue, 0 }  ][line width=0.75]    (10.93,-3.29) .. controls (6.95,-1.4) and (3.31,-0.3) .. (0,0) .. controls (3.31,0.3) and (6.95,1.4) .. (10.93,3.29)   ;
%Straight Lines [id:da09522737255495284] 
\draw    (347.8,209.9) -- (311.73,230.6) ;
\draw [shift={(310,231.6)}, rotate = 330.14] [color={rgb, 255:red, 0; green, 0; blue, 0 }  ][line width=0.75]    (10.93,-3.29) .. controls (6.95,-1.4) and (3.31,-0.3) .. (0,0) .. controls (3.31,0.3) and (6.95,1.4) .. (10.93,3.29)   ;
%Straight Lines [id:da5425186939127167] 
\draw [color={rgb, 255:red, 120; green, 120; blue, 120 }  ,draw opacity=1 ] [dash pattern={on 4.5pt off 4.5pt}]  (560,83) -- (560,247) ;
\draw [shift={(560,250)}, rotate = 270] [fill={rgb, 255:red, 120; green, 120; blue, 120 }  ,fill opacity=1 ][line width=0.08]  [draw opacity=0] (8.93,-4.29) -- (0,0) -- (8.93,4.29) -- cycle    ;
\draw [shift={(560,80)}, rotate = 90] [fill={rgb, 255:red, 120; green, 120; blue, 120 }  ,fill opacity=1 ][line width=0.08]  [draw opacity=0] (8.93,-4.29) -- (0,0) -- (8.93,4.29) -- cycle    ;
%Shape: Circle [id:dp3668957361510845] 
\draw  [fill={rgb, 255:red, 0; green, 0; blue, 0 }  ,fill opacity=1 ] (147.5,166.6) .. controls (147.5,165.22) and (148.62,164.1) .. (150,164.1) .. controls (151.38,164.1) and (152.5,165.22) .. (152.5,166.6) .. controls (152.5,167.98) and (151.38,169.1) .. (150,169.1) .. controls (148.62,169.1) and (147.5,167.98) .. (147.5,166.6) -- cycle ;
%Straight Lines [id:da7931044684021666] 
\draw [color={rgb, 255:red, 120; green, 120; blue, 120 }  ,draw opacity=1 ] [dash pattern={on 4.5pt off 4.5pt}]  (143.5,90.44) -- (82.41,200.2) ;
\draw [shift={(80.95,202.82)}, rotate = 299.1] [fill={rgb, 255:red, 120; green, 120; blue, 120 }  ,fill opacity=1 ][line width=0.08]  [draw opacity=0] (8.93,-4.29) -- (0,0) -- (8.93,4.29) -- cycle    ;
\draw [shift={(144.95,87.82)}, rotate = 119.1] [fill={rgb, 255:red, 120; green, 120; blue, 120 }  ,fill opacity=1 ][line width=0.08]  [draw opacity=0] (8.93,-4.29) -- (0,0) -- (8.93,4.29) -- cycle    ;

% Text Node
\draw (155,62) node [anchor=north west][inner sep=0.75pt]  [font=\small] [align=left] {$\displaystyle \left( 0,\sqrt{6} /3\right)$};
% Text Node
\draw (75,215) node [anchor=north west][inner sep=0.75pt]  [font=\small] [align=left] {$\displaystyle \left( -\sqrt{2} /3,0\right)$};
% Text Node
\draw (170,215) node [anchor=north west][inner sep=0.75pt]  [font=\small] [align=left] {$\displaystyle \left(\sqrt{2} /3,0\right)$};
% Text Node
\draw (569,152) node [anchor=north west][inner sep=0.75pt]  [font=\small,color={rgb, 255:red, 120; green, 120; blue, 120 }  ,opacity=1 ] [align=left] {$\displaystyle  \begin{array}{l}
D_{1} \ =\ 4\sqrt{6} /9\\
\ \ \ \ \ \ =\ D_{0} \cdot 2\sqrt{3} /3
\end{array}$};
% Text Node
\draw (155,162) node [anchor=north west][inner sep=0.75pt]  [font=\small] [align=left] {$\displaystyle \left( 0,\sqrt{6} /9\right)$};
% Text Node
\draw (\branchTriangCoord ,112) node [anchor=north west][inner sep=0.75pt]  [font=\small,color={rgb, 255:red, 120; green, 120; blue, 120 }  ,opacity=1 ] [align=left] {$\displaystyle D_{0} \ =\ 2\sqrt{2} /3$};

\end{tikzpicture}}
    \caption{Example of slackness in diameter constraint for the $2$-simplex (after projecting into $\R^2$ and translating).}
    \label{Fig:EquilateralTriangle}
\end{figure}

\begin{remark}[On the Special Case of Interpolating with Strongly Convex Sets] \label{Rem:SpecialCase_SCInterp}
If we take $\beta$ to $\infty$, that is, entirely relaxing the requirement of smoothness, then our interpolation conditions reduce to 
\begin{align}
    & \|z_i-\oa n_i - z_j \| \leq \oa \label{Eqn:SpecialConditions_SC1}\\
    & \| z_i - \oa n_i - x_k \| \leq \oa - \delta_k \label{Eqn:SpecialConditions_SC2}\\
    & \| z_i - z_j \| \leq \lambda D  \label{Eqn:SpecialConditions_SC3}\\
    & \| z_i - x_k  \| \leq \lambda D - \delta_k  \label{Eqn:SpecialConditions_SC4}\\
    & \|x_k - x_\el \| \leq \lambda D - \delta_k - \delta_\el \label{Eqn:SpecialConditions_SC5}
\end{align}
with no more dependence on auxiliary parameters $\{w_k\}_{k \in \K}$.
As an immediate corollary, we have the following result for strongly convex sets.
\begin{corollary} \label{Cor:Interp-SC}
        Let $S=\SSetInterp$. If $S$ is $\mathcal{C}_{\alpha,\infty,D}$-interpolable, then for all $i,j \in \I$ and  $k,\el \in \K$, (\ref{Eqn:SpecialConditions_SC1}-\ref{Eqn:SpecialConditions_SC5}) are satisfied for $\lambda=1$. Conversely, if for all $i,j \in \I$ and  $k,\el \in \K$, (\ref{Eqn:SpecialConditions_SC1}-\ref{Eqn:SpecialConditions_SC5}) are satisfied for $\lambda = 1/\sqrt{2}$, then $S$ is $\mathcal{C}_{\alpha,\infty,D}$-interpolable.
\end{corollary}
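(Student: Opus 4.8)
The plan is to obtain \cref{Cor:Interp-SC} directly from \cref{Thm:Interpolation_Necc} and \cref{Thm:Interpolation_SuffWithSlack} by specializing the six conditions \eqref{Eqn:Condition1}--\eqref{Eqn:Condition6} to the limiting case $\beta = \infty$ and checking that they collapse exactly to \eqref{Eqn:SpecialConditions_SC1}--\eqref{Eqn:SpecialConditions_SC5}. First I would record what each ingredient becomes when $\beta \to \infty$: we have $\ob = 0$; since $\alpha \neq \beta$ in this regime, $\gamma = (\oa - \ob)^{-1} = \alpha$ and hence $\og = \oa$; and $s_k = \max\{0,\delta_k - \ob\} = \delta_k$ because $\delta_k \geq 0$. (The fully degenerate subcase $\alpha = \beta = \infty$ forces every interpolating set to be a singleton and is dispatched by inspection.)

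The crucial observation is that \eqref{Eqn:Condition3} becomes $\|x_k - w_k\| \leq \ob - \delta_k + s_k = 0$, so the auxiliary variable $w_k$ is forced to equal $x_k$ for every $k \in \K$; in particular, the choice of $\{w_k\}_{k \in \K}$ carries no freedom. Substituting $w_k = x_k$ together with $\ob = 0$, $\og = \oa$, $s_k = \delta_k$ into the remaining five conditions, one checks term by term that \eqref{Eqn:Condition1} becomes \eqref{Eqn:SpecialConditions_SC1}, \eqref{Eqn:Condition2} becomes \eqref{Eqn:SpecialConditions_SC2}, \eqref{Eqn:Condition4} becomes \eqref{Eqn:SpecialConditions_SC3}, \eqref{Eqn:Condition5} becomes \eqref{Eqn:SpecialConditions_SC4}, and \eqref{Eqn:Condition6} becomes \eqref{Eqn:SpecialConditions_SC5}. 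Consequently, for any $\lambda \in (0,1]$, we have $S \in \mathrm{Interp}(\alpha, \infty, D; \lambda)$ if and only if the system \eqref{Eqn:SpecialConditions_SC1}--\eqref{Eqn:SpecialConditions_SC5} holds with that value of $\lambda$: the ``only if'' direction uses that any admissible $\{w_k\}_{k\in\K}$ satisfy $w_k = x_k$ by \eqref{Eqn:Condition3}, and the ``if'' direction exhibits $w_k := x_k$ as witnesses.

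With this equivalence in hand, both halves of the corollary are immediate. For necessity: if $S$ is $\C_{\alpha,\infty,D}$-interpolable, then \cref{Thm:Interpolation_Necc} gives $S \in \mathrm{Interp}(\alpha,\infty,D;1)$, which is exactly \eqref{Eqn:SpecialConditions_SC1}--\eqref{Eqn:SpecialConditions_SC5} with $\lambda = 1$. For sufficiency: if \eqref{Eqn:SpecialConditions_SC1}--\eqref{Eqn:SpecialConditions_SC5} hold with $\lambda = 1/\sqrt{2}$, then $S \in \mathrm{Interp}(\alpha,\infty,D;1/\sqrt{2})$, and the ``in particular'' clause of \cref{Thm:Interpolation_SuffWithSlack} yields $\C_{\alpha,\infty,D}$-interpolability.

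I do not expect a genuine obstacle here: all the real content lives in the general interpolation theorems, and the corollary is pure bookkeeping around the $\beta = \infty$ limit. The only points requiring care are that the stated conventions ($\ob = 0$, and $\og = 0$ only when $\alpha = \beta$, which does not occur in the nondegenerate case since $\beta = \infty > \alpha$) make the substitutions literally valid, and that one genuinely checks \eqref{Eqn:Condition3} \emph{forces} $w_k = x_k$ rather than merely being compatible with it --- this is exactly what permits eliminating the auxiliary variables, matching the claim in \cref{Rem:SpecialCase_SCInterp} that no dependence on $\{w_k\}_{k \in \K}$ remains.
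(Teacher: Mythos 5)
Your proof is correct and follows essentially the same route as the paper: the paper obtains \cref{Cor:Interp-SC} as ``an immediate corollary'' of the simplification carried out in \cref{Rem:SpecialCase_SCInterp}, which is exactly the $\beta\to\infty$ substitution you perform (including the key observation that \eqref{Eqn:Condition3} forces $w_k = x_k$ and hence eliminates the auxiliary variables), combined with Theorems~\ref{Thm:Interpolation_Necc} and~\ref{Thm:Interpolation_SuffWithSlack}.
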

\end{remark}

\begin{remark}[On the Special Case of Interpolating with \edit{Smooth, Convex Sets}] \label{Rem:SpecialCase_SmoothInterp}
    We can similarly relax the strong convexity condition, taking $\alpha$ to zero and deriving simpler conditions for smooth convex interpolation from \cref{Thm:Interpolation_NoSlack}. Suppose that $\|z_i - \oa n_i - (z_j - \ob n_j)\| \leq \og$. Then
    \begin{align*}
        & \langle z_i - \oa n_i - z_j + \ob n_j, z_i - \oa n_i - z_j + \ob n_j \rangle \leq \frac{1}{\gamma^2} \\
        \Leftrightarrow \quad & \| z_i - \ob n_i - z_j + \ob n_j\|^2 - 2 \langle \og n_i, z_i - \ob n_i - z_j + \ob n_j \rangle + \frac{1}{\gamma^2} \leq \frac{1}{\gamma^2} \\
        \Leftrightarrow \quad &  \langle n_i, z_j - \ob n_j - z_i + \ob n_i \rangle \leq -\frac{\gamma}{2} \|z_i - \ob n_i - z_j + \ob n_j\|^2 \ .
    \end{align*}
    Taking the limit as $\alpha \to 0$ and correspondingly $\gamma \to 0$, this inequality becomes
    \begin{equation} \label{Eqn:SpecialCondition_Smooth1}
        \langle n_i, z_j - \ob n_j - z_i + \ob n_i \rangle \leq 0
    \end{equation}
    in place of \eqref{Eqn:Condition1}.
    Similarly, for $w_k$, we obtain
    \begin{equation} \label{Eqn:SpecialCondition_Smooth2}
        \langle n_i, w_k + s_k n_i - z_i + \ob n_i \rangle \leq 0
    \end{equation}
    in place of \eqref{Eqn:Condition2}. We can then state a corollary for smooth sets.
    \begin{corollary}\label{Cor:Interp-Smooth}
        Let $S=\SSetInterp$. $S$ is $\mathcal{C}_{0,\beta,D}$-interpolable if and only if there exist $\{w_k\}_{k \in \K} \subset \R^d$ such that for all $i,j \in \I$ and $k,\el \in \K$, \eqref{Eqn:SpecialCondition_Smooth1} and \eqref{Eqn:SpecialCondition_Smooth2} along with \eqref{Eqn:Condition3}-\eqref{Eqn:Condition6} are satisfied for $\lambda=1$.
    \end{corollary}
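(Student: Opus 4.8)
The plan is to obtain this as a direct specialization of \cref{Thm:Interpolation_NoSlack} to strong convexity parameter $\alpha = 0$. Since the hypothesis ``$\alpha = 0$ or $D \geq \frac{2}{\alpha}$'' of that theorem is trivially satisfied when $\alpha = 0$, it applies without the $\sqrt{2}$-slack of \cref{Thm:Interpolation_SuffWithSlack} and yields: $S$ is interpolable by some $\beta$-smooth convex set of diameter at most $D$ if and only if $S \in \mathrm{Interp}(0,\beta,D;1)$, i.e., if and only if there exist $\{w_k\}_{k\in\K}\subset\R^d$ satisfying \eqref{Eqn:Condition1}--\eqref{Eqn:Condition6} with $\lambda = 1$. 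It therefore only remains to verify that, read at $\alpha = 0$, these six conditions are precisely \eqref{Eqn:SpecialCondition_Smooth1}, \eqref{Eqn:SpecialCondition_Smooth2}, and \eqref{Eqn:Condition3}--\eqref{Eqn:Condition6} (all at $\lambda = 1$).

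For the translation: conditions \eqref{Eqn:Condition3}--\eqref{Eqn:Condition6} and the quantities $s_k = \max\{0,\delta_k - \ob\}$ never reference $\alpha$ or $\gamma$, so they are literally unchanged. For \eqref{Eqn:Condition1}, squaring both sides (an equivalence, since both are nonnegative), expanding the left-hand side via $\oa = \ob + \og$, and cancelling the common $\frac{1}{\gamma^2}$ term---exactly the chain of equivalences displayed just before this corollary---rewrites \eqref{Eqn:Condition1} as
\[
    \langle n_i, z_j - \ob n_j - z_i + \ob n_i \rangle \leq -\tfrac{\gamma}{2}\,\|z_i - \ob n_i - z_j + \ob n_j\|^2 .
\]
At $\alpha = 0$ we have $\gamma = (\oa - \ob)^{-1} = 0$, so the right-hand side vanishes and this is exactly \eqref{Eqn:SpecialCondition_Smooth1}. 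Running the same computation on \eqref{Eqn:Condition2}, with $w_k + s_k n_i$ in the role of $z_j - \ob n_j$, rewrites it as $\langle n_i,\, w_k + s_k n_i - z_i + \ob n_i \rangle \leq \tfrac{\gamma}{2}\big(s_k^2 - \|z_i - \ob n_i - w_k\|^2\big)$, which at $\gamma = 0$ is exactly \eqref{Eqn:SpecialCondition_Smooth2}. Combining these observations with \cref{Thm:Interpolation_NoSlack} gives the claim.

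The one point requiring care is the meaning of the conditions at the extreme value $\alpha = 0$: there $\oa = \og = +\infty$, so \eqref{Eqn:Condition1} and \eqref{Eqn:Condition2} are $\infty$-versus-$\infty$ comparisons and must be read, per the paper's convention, as the limits of their finite-$\alpha$ forms. The re-expression above is precisely what renders these limits finite and well-defined, and one should note that the squaring step is a genuine equivalence throughout the relevant regime since $\og \geq 0$ and $\og - s_k \geq 0$ there. I expect this limiting interpretation to be the only subtlety; notably, no compactness or ``limit of interpolating sets'' argument is needed, because \cref{Thm:Interpolation_NoSlack} already covers $\alpha = 0$ directly.
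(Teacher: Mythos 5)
Your proposal is correct and takes essentially the same approach as the paper: the corollary is obtained by specializing \cref{Thm:Interpolation_NoSlack} at $\alpha=0$ (which that theorem covers directly), and your algebraic derivation showing that \eqref{Eqn:Condition1} and \eqref{Eqn:Condition2} pass to \eqref{Eqn:SpecialCondition_Smooth1} and \eqref{Eqn:SpecialCondition_Smooth2} in the $\gamma\to 0$ limit is precisely the computation carried out in the remark immediately preceding the corollary, with your treatment of \eqref{Eqn:Condition2} filling in what the paper leaves as ``similarly.'' Your final caveat about the $\infty$-versus-$\infty$ reading at $\alpha=0$ is the right thing to flag, and the paper resolves it the same way---by defining the $\alpha=0$ conditions through exactly this limiting rewrite.
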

    \noindent Taking the limit as $\beta$ tends to $\infty$ 
    \edit{yields a final result for nonsmooth, non-strongly convex interpolation.
    \begin{corollary}\label{Cor:ConvexInterp}
        A set $S=\SSetInterp$ is $\C_{0,\infty,D}$-interpolable if and only if for all $i,j \in \I$ and  $k,\el \in \K$,
        \begin{align*}
            & \langle n_i, z_j - z_i \rangle \leq 0\\
            & \langle n_i, x_k + \delta_k n_i - z_i \rangle \leq 0 \\
            & \|x_k - x_\el \| \leq D - \delta_k - \delta_\el \\
            & \|z_i - x_k \| \leq D - \delta_k \\
            & \|z_i - z_j \| \leq D  
        \end{align*}
        where $n_i = \unitv{i}{}$ for all $i \in \I$.
    \end{corollary}
    }
    
\end{remark}

Finally, we note the strong connections between our set interpolation and the function interpolation of \cite{Interpolation2}. In the convex case of \cref{Cor:ConvexInterp} above, if $\delta_k = 0$ for all $k$ (or if $\K = \emptyset)$,  
we can replace all uses of the normalized vectors $n_i$ with $v_i$ and recover an equivalent reformulation of \cite[Theorem 3.6]{Interpolation2}.
More generally, we show below that smooth function interpolation \cite{Interpolation} is recovered as a limit of our set interpolation theory. We defer the proof to \cref{App:Interp_SetFuncInterp}.

\begin{theorem} \label{Thm:SetFuncInterp_Smooth}
    The set of observations $\{(x_i,g_i, f_i)\}_{i \in \I}$ is $\mathcal{F}_{0,L}$-interpolable if and only if \\
    $\setInterpRescaledArgs$ is $\mathcal{C}_{0, \eta^2L, \infty}$-interpolable for all $\eta > 0$.
\end{theorem}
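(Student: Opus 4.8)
The plan is to prove the two implications separately. The ``only if'' direction rests on the geometric fact, implicit in the discussion preceding the theorem, that the epigraph of a smooth convex function is a smooth set; the ``if'' direction is a limiting argument built on \cref{Lem:SetFuncConditionsEta} (equivalently, \cref{Thm:SetFunctionLimit} specialized to $\mu=0$).

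For the forward direction, suppose $f\in\F_{0,L}$ interpolates $\{(x_i,g_i,f_i)\}_{i\in\I}$; since an $L$-smooth function is differentiable, $g_i=\nabla f(x_i)$ and $f_i=f(x_i)$. Fix $\eta>0$ and set $\phi_\eta(u):=f(\eta u)$. A one-line chain-rule computation gives $\nabla\phi_\eta(u)=\eta\nabla f(\eta u)$, which is $\eta^2 L$-Lipschitz, so $\phi_\eta\in\F_{0,\eta^2 L}$. Applying the definition of $\eta^2 L$-smoothness of $\phi_\eta$ together with the dominance $\ballUB{y'}(y;\eta^2L)\ge\quadraticUB{y'}(y;\eta^2L)$ of \eqref{Eqn:ballUpperBound} yields $\phi_\eta(y)\le\quadraticUB{y'}(y;\eta^2L)\le\ballUB{y'}(y;\eta^2L)$ for all $y,y'$, which is exactly the condition for $\epi\phi_\eta$ to be an $\eta^2 L$-smooth set (of infinite diameter). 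The boundary of $\epi\phi_\eta$ is the graph of $\phi_\eta$, so it passes through $(\tfrac{x_i}{\eta},\phi_\eta(\tfrac{x_i}{\eta}))=(\tfrac{x_i}{\eta},f_i)$ with outward normal $(\nabla\phi_\eta(\tfrac{x_i}{\eta}),-1)=(\eta g_i,-1)$. Hence $C=\epi\phi_\eta$ shows $\setInterpRescaledArgs$ is $\C_{0,\eta^2 L,\infty}$-interpolable, for every $\eta>0$.

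For the reverse direction, suppose $\setInterpRescaledArgs$ is $\C_{0,\eta^2 L,\infty}$-interpolable for all $\eta>0$. Since $\K=\emptyset$ and the diameter bound is infinite, \cref{Cor:Interp-Smooth} collapses to the single family \eqref{Eqn:SpecialCondition_Smooth1} applied to the rescaled epigraph data with smoothness parameter $\eta^2 L$; that is, $\conditionSmSCSet{0,\eta^2 L}(\rescaledArgs)\ge 0$ for all $i,j\in\I$. Fixing $i,j$ and letting $\eta\to 0^+$, \cref{Lem:SetFuncConditionsEta} (with $\mu=0$) gives $\conditionSmSCSet{0,\eta^2 L}(\rescaledArgs)=\conditionSmSCFunc{0,L}(\standardArgs)+\eta^2 c$ with $c$ bounded as $\eta\to 0$, so $\conditionSmSCFunc{0,L}(\standardArgs)\ge 0$. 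Since this holds for all $i,j$, \cref{Thm:func-interp} with $\mu=0$ shows $\{(x_i,g_i,f_i)\}_{i\in\I}$ is $\F_{0,L}$-interpolable. (Alternatively, after passing to the limit one may invoke \cref{Thm:SetFunctionLimit} directly with $\mu=0$.)

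The point to watch — and the reason this statement is a genuine strengthening of \cref{Thm:SetFunctionLimit} rather than a corollary of it — is the asymmetry between the two directions. \cref{Thm:SetFunctionLimit} only controls $\conditionSmSCSet{0,\eta^2 L}$ in the limit $\eta\to 0$, whereas here the forward direction must produce an interpolating smooth set for \emph{every} fixed $\eta>0$, and the interpolation inequalities alone do not give this. The geometric fact that $\epi\phi_\eta$ is $\eta^2L$-smooth, supplied precisely by the ball-over-quadratic dominance of \eqref{Eqn:ballUpperBound}, is what closes this gap; this is the only nontrivial step, the rest being the routine rescaling computation and the limit. (One could also check directly that the right-hand side of the rescaled version of \eqref{Eqn:SpecialCondition_Smooth1} is monotone in $\eta$, making the $\eta\to 0$ instance the tightest, but the epigraph argument is cleaner and is the one suggested by the surrounding exposition.)
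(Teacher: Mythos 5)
Your proof is correct and follows essentially the same route as the paper: the forward direction constructs $\hat f(x)=f(\eta x)$ and uses the ball-over-quadratic dominance of \eqref{Eqn:ballUpperBound} to show $\epi\hat f$ is an $\eta^2 L$-smooth set; the reverse direction extracts $\conditionSmSCSet{0,\eta^2 L}(\rescaledArgs)\ge 0$ from interpolability, applies \cref{Lem:SetFuncConditionsEta}, and lets $\eta\to 0$ to recover $\conditionSmSCFunc{0,L}\ge 0$. Your closing remark correctly pins down why the forward direction needs the explicit epigraph construction rather than a limiting inequality argument.
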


\subsection{Proof of Theorem~\ref{Thm:Interpolation_Necc}}
     Suppose that $S$ is $\C_{\alpha,\beta,D}$-interpolable. Then there exists some $C \in \C_{\alpha,\beta,D}$ such that $v_i \in N_C(z_i)$, $z_i \in C$, and $x_k \in \Interior{\delta_k} C$ for all $i \in \I, k \in \K$. From \cref{Lem:SmoothSCParam}, we know that $C = C_0 + B(0,\ob)$ for some $\gamma$-strongly convex set $C_0$, where $\og + \ob = \oa$. In particular, as shown in \cite{LiuGrimmer}, we can write $C_0$ explicitly with the Minkowski difference $C_0 = C - B(0,\ob) := \{x \mid x + B(0,\ob) \subseteq C\}$.

    As a first intermediate result, we claim for each $i\in\I$,
    \begin{equation}\label{Eqn:vi-inner-normality}
        v_i \in N_{C_0}(z_i - \unitv{i}{\beta}) \ .
    \end{equation}
    This follows since $v_i \in N_{C}(z_i)$ implies for all $y \in C$, $\langle v_i, y - z_i \rangle \leq 0$. So noting that every $p \in C_0$ has $p + \unitv{i}{\beta} \in C$, we have
    $\langle v_i, p - (z_i - \unitv{i}{\beta}) \rangle \leq 0$ for all $p\in C_0$. From the definition of $C_0$, $z_i - \unitv{i}{\beta} \in C_0$. Together these yield~\eqref{Eqn:vi-inner-normality}.
    
    Since $C_0$ is $\gamma$-strongly convex, \eqref{Eqn:vi-inner-normality} implies that $C_0 \subseteq B(z_i - \unitv{i}{\beta} - \unitv{i}{\gamma}, \og)$. Since $z_j - \unitv{j}{\beta} \in C_0$ for all $j \in\I$, we have $z_j - \unitv{j}{\beta} \in B(z_i - \unitv{i}{\beta}-\unitv{i}{\gamma}, \og)$. So \eqref{Eqn:Condition1} holds as
    \begin{equation*}
       \| z_i - \unitv{i}{\alpha} - (z_j - \unitv{j}{\beta})\| = \| z_i - \unitv{i}{\beta} - \unitv{i}{\gamma} - (z_j - \unitv{j}{\beta}) \| \leq \og \ . 
    \end{equation*}

    Now, we verify \eqref{Eqn:Condition2} and \eqref{Eqn:Condition3} for each $k\in\K$ through two cases:
    
    \noindent {\bf Case 1: $x_k \in C_0$.} In this case, we let $w_k = x_k$. Suppose that $\delta_k > \ob$, so $s_k := \max \{0, \delta_k - \ob \} = \delta_k - \ob$. We claim that $x_k \in \Interior{s_k} C_0$. Suppose that for some $\zeta$, $x_k + (\delta_k - \ob)\unit{\zeta} \notin C_0$. Then $x_k + (\delta_k - \ob) \unit{\zeta} + \ob \unit{\zeta} = x_k + \delta_k \unit{\zeta} \notin C$. But since $x_k \in \Interior{\delta_k} C$, this is a contradiction, so we must have $x_k \in \Interior{s_k} C_0$. Now suppose $\delta_k \leq \ob$. Then $s_k = 0$, so $x_k \in \Interior{s_k} C_0$ is true by assumption. Since $x_k = w_k$, we have shown that $w_k + B(0,s_k) \subseteq C_0$. When combined with the previous fact that $C_0 \subseteq B(z_i - \unitv{i}{\beta} - \unitv{i}{\gamma}, \og)$, it follows that
    \begin{equation*}
        \|z_i - \unitv{i}{\alpha} - w_k \| \leq \og - s_k \ .
    \end{equation*}
    This is equivalent to \eqref{Eqn:Condition2}.
    Finally, noting $\| w_k - x_k \| = 0 \leq \max\{0, \ob - \delta_k \} = \ob - \delta_k + s_k$, we conclude \eqref{Eqn:Condition3} is satisfied.

    \noindent {\bf Case 2: $x_k \notin C_0$.} Since $C = C_0 + B(0,\ob)$ and $x_k \notin C_0$, we must have $\delta_k < \ob$. We therefore have $s_k=0$, and we set $w_k$ as the orthogonal projection of $x_k$ onto $C_0$ and $\zeta = \frac{x_k-w_k}{\|x_k-w_k\|}$ as the corresponding normal vector to $C_0$. Since $w_k\in C_0$, the same reasoning as above leveraging~\eqref{Eqn:vi-inner-normality} implies \eqref{Eqn:Condition2}. We know $x_k \in \Interior{\delta_k} C$, so it follows that $w_k + \zeta(\|w_k-x_k\|+\delta_k) \in C$. However, since $\zeta\in N_{C_0}(w_k)$, it follows that $\|w_k-x_k\|+\delta_k \leq \ob$. Recalling $s_k=0$, that is exactly \eqref{Eqn:Condition3}.
    
    Finally, we verify the diameter conditions \eqref{Eqn:Condition4}, \eqref{Eqn:Condition5}, and \eqref{Eqn:Condition6}. For all $i \in \I, k \in \K$, we have shown $z_i - \unitv{i}{\beta} \in C_0$ and in either case above, we have that $w_k \in \Interior{s_k} C_0$. Hence the diameter bound of $\diam(C_0) \leq D - \frac{2}{\beta}$ from \cref{Lem:SmoothDiam} yields the remaining three conditions for $\lambda = 1$.

\subsection{Proof of Theorem~\ref{Thm:Interpolation_SuffWithSlack}}
    Suppose our \edit{six conditions} hold for $\lambda = \sqrt{\frac{d+1}{2d}}\geq \frac{1}{\sqrt{2}}$. We begin by constructing our interpolating set $C$. First, we construct $C_0$ as the $\gamma$-strongly convex hull of $\{B(w_k, s_k) \}_{k \in \K}$ and $\{z_i - \unitv{i}{\beta} \}_{i \in \I}$. More formally, by \cite[Proposition 2.5]{Vial_StrongAndWeak}, we set $C_0 = \bigcap_{y\in Y} B(y,\og)$, where 
    \begin{equation*}
        Y = \{ y \mid \|z_i - \unitv{i}{\beta} - y \| \leq \og \quad \forall i \in \I, \quad \|w_k - y\| \leq \og - s_k  \quad \forall k \in \K \} \ .
    \end{equation*}
    Observe that each ball $B(y,\og)$ is $\gamma$-strongly convex, and this property is preserved under intersections~\cite[Proposition 2]{Vial_StrongConvexity}, so $C_0$ is $\gamma$-strongly convex. We then define $C = C_0 + B(0,\ob)$. By \cref{Lem:SmoothSCParam}, we see that $C$ is $\alpha$-strongly convex and $\beta$-smooth. 
    All that remains is to verify this set $C$ correctly interpolates the given observation data $S$ and has the desired diameter bound $D$.

    \noindent {\bf Verification that $z_i\in C$ and $v_i \in N_C(z_i)$ for each $i\in\I$.}
    From the definition of $Y$, $z_i - \unitv{i}{\beta} \in B(y,\og)$ for all $y \in Y$. Hence $z_i - \unitv{i}{\beta} \in C_0$. Since $C=C_0+B(0,\ob)$, it follows that $z_i \in C$. Further, by \eqref{Eqn:Condition1}, $\|z_i - \unitv{i}{\alpha} - (z_j - \unitv{j}{\beta}) \| \leq \og$ for all $j\in\I$, and by \eqref{Eqn:Condition2}, $\|z_i - \unitv{i}{\alpha} - w_k\|\leq \og-s_k$ for all $k\in\K$. Hence $z_i - \unitv{i}{\alpha} \in Y$. Letting $B_i = B(z_i - \unitv{i}{\alpha}, \og)$, observe that $v_i \in N_{B_i}(z_i - \unitv{i}{\alpha} + \unitv{i}{\gamma}) = N_{B_i}(z_i - \unitv{i}{\beta})$. Combining this with the fact that $C_0 \subseteq B_i$ since $z_i - \unitv{i}{\alpha} \in Y$, one has that $v_i \in N_{C_0}(z_i - \unitv{i}{\beta})$. Finally, since $C = C_0 + B(0,\ob)$, it follows that $v_i \in N_C(z_i)$ as well.

    \noindent {\bf Verification that $x_k\in\Interior{\delta_k} C$ for each $k\in\K$.}
    Next, we consider $w_k$ and $x_k$. By definition of $Y$, we see that for all $y \in Y$, $w_k \in \Interior{s_k} B(y, \og)$. Consequently, by construction, $w_k \in \Interior{s_k} C_0$. Then using the fact that $\|x_k - w_k \| \leq \ob + s_k - \delta_k$ by \eqref{Eqn:Condition3}, for any $\zeta$, it follows that
    \begin{align*}
        \|x_k + \delta_k \unit{\zeta} - (w_k + s_k \unit{\zeta}) \| & \leq \|x_k - w_k \| + |\delta_k - s_k | = \|x_k - w_k \| + \delta_k - s_k \\
        & \leq \ob + s_k - \delta_k + (\delta_k - s_k) = \ob \ .
    \end{align*}
    Since $w_k \in \Interior{s_k} C_0$, for any nonzero $\zeta$, one has $\hat{w}_k := w_k + s_k \unit{\zeta} \in C_0$. Hence $\|x_k + \delta_k \unit{\zeta} - \hat{w}_k \| \leq \ob$ from which one can conclude for any $\zeta$, $x_k + \delta_k \unit{\zeta} \in C$ and consequently $x_k \in \Interior{\delta_k} C$. 

    \noindent {\bf Verification of diameter bound $\diam(C) \leq D$.}
    By \cref{Lem:SmoothDiam}, it is sufficient to show that $\diam(C_0) \leq D - \frac{2}{\beta}$. Since $C_0$ is $\gamma$-strongly convex, we already know $\diam(C_0) \leq \frac{2}{\gamma}$. So, all that remains is to prove the needed bound when $D - \frac{2}{\beta} < \frac{2}{\gamma}$.
    Jung's theorem~\cite{Jung1901} states that given a compact set $X \subseteq \R^d$, there exists a closed ball $B$ with radius $R = \diam(X)\sqrt{\frac{d}{2(d+1)}}$ containing $X$.
    Applied to the set $X = \{z_i - \unitv{i}{\beta}\}_{i\in\I} \ \cup \ \{B(w_k, s_k) \}_{k \in \K}$, which has diameter at most $\sqrt{\frac{d+1}{2d}}(D - \frac{2}{\beta})$ by~\eqref{Eqn:Condition4}, \eqref{Eqn:Condition5}, and~\eqref{Eqn:Condition6}, there must exist $q\in\mathbb{R}^d$ such that $X\subseteq B(q,R)$ where $R=\left(\sqrt{\frac{d+1}{2d}}(D - \frac{2}{\beta})\right)\sqrt{\frac{d}{2(d+1)}} = \frac{1}{2}(D-\frac{2}{\beta}) $.
    Hence the distance from $q$ to any point in $X$ is at most $ \frac{1}{2}(D-\frac{2}{\beta}) < 1/\gamma$. As a result, a neighborhood of $q$ lies in $Y$. Namely,
    $ B(q, \og -  \frac{1}{2}(D-\frac{2}{\beta})) \subseteq Y$. So,
    $$ C_0 = \bigcap_{y\in Y} B\left(y,\og\right) \subseteq \bigcap_{y\in B(q, \og -  \frac{1}{2}(D-\frac{2}{\beta}))} B\left(y,\og\right) = B\left(q, \frac{1}{2}(D-\frac{2}{\beta})\right) \ , $$
    proving the needed bound $\diam(C_0) \leq D - \frac{2}{\beta}$.

\subsection{Proof of Theorem~\ref{Thm:Interpolation_NoSlack}}
The forward direction is already proven by \cref{Thm:Interpolation_Necc}, so we only need to prove the reverse. 
First consider the case of $D \geq \frac{2}{\alpha}$. By our argument in \cref{Thm:Interpolation_SuffWithSlack}, if \edit{our conditions} hold for $\lambda=1$, then there exists an $\alpha$-strongly convex, $\beta$-smooth set $C$ (of unspecified diameter) that interpolates $S$. However, since any $\alpha$-strongly convex set $C$ must satisfy $C \subseteq B(z - \oa n, \oa)$ for $z \in \bdry C$ with unit normal vector $n$, we see that $\diam(C) \leq \diam(B(z - \oa n, \oa)) = \frac{2}{\alpha} \leq D$. Therefore, $S$ is $\C_{\alpha,\beta,D}$-interpolable.

Now consider the case of $\alpha=0$. Suppose our \edit{six conditions} hold for $\lambda = 1$. As shown in \cref{Rem:SpecialCase_SmoothInterp}, our conditions \eqref{Eqn:Condition1} and \eqref{Eqn:Condition2} reduce to \eqref{Eqn:SpecialCondition_Smooth1} and \eqref{Eqn:SpecialCondition_Smooth2}. Define $C_0 = \textrm{conv} (\{z_i - \unitv{i}{\beta} \}_{i \in \I}, \{ B(w_k,s_k) \}_{k \in \K} )$. Clearly $C_0$ is convex with $w_k \in \Interior{s_k} C_0$ and $z_i - \unitv{i}{\beta} \in C_0$. We then construct $C = C_0 + B(0,\ob)$. From \cref{Prop:SmoothProperties}, $C$ is $\beta$-smooth. 

Next, we show that $z_i\in C$ and $v_i \in N_C(z_i)$. Since $z_i - \unitv{i}{\beta} \in C_0$, we immediately have that $z_i \in C$. Consider any $y \in C$ and let $p\in C_0$ be such that $y = p + \ob \zeta$ for some $\|\zeta\|\leq 1$. Since $C_0$ is a convex hull, we write $p$ as the convex combination 
\begin{equation*}
    p = \sum_{j \in \I} \sigma_j (z_j -  \unitv{j}{\beta}) + \sum_{k \in \K} \phi_k ( w_k + s_k \xi_k)
\end{equation*}
where $\sigma_j, \phi_k \geq 0$ and $\sum_j \sigma_j + \sum_k \phi_k = 1$, and $\|\xi_k \|\leq 1$.
We then have
\begin{align*}
    \langle v_i, & y - z_i \rangle  = \langle v_i, p + \ob \zeta - z_i \rangle \leq \langle v_i, p - (z_i - \unitv{i}{\beta}) \rangle \\
    &  = \sum_{j \in \I} \sigma_j \langle v_i, z_j - \unitv{j}{\beta} - (z_i - \unitv{i}{\beta}) \rangle + \sum_{k \in \K} \phi_k \langle v_i, w_k + s_k \xi_k - (z_i - \unitv{i}{\beta}) \rangle \\
    & \leq \sum_{j \in \I} \sigma_j \langle v_i, z_j - \unitv{j}{\beta} - (z_i - \unitv{i}{\beta}) \rangle + \sum_{k \in \K} \phi_k \langle v_i, w_k + s_k \unit{v_i} - (z_i - \unitv{i}{\beta}) \rangle \\
    & \leq 0
\end{align*}
where the last inequality follows from \eqref{Eqn:SpecialCondition_Smooth1} and \eqref{Eqn:SpecialCondition_Smooth2}.
Hence $v_i \in N_C(z_i)$.

By the same argument as in \cref{Thm:Interpolation_SuffWithSlack}, we have that $x_k \in \Interior{\delta_k} C$. Lastly, since $C_0$ is a convex hull, \eqref{Eqn:Condition4}, \eqref{Eqn:Condition5}, and \eqref{Eqn:Condition6} establish that
\begin{equation*}
    \diam(C_0) = \diam \left( \{z_i - \unitv{i}{\beta}\}_{i \in \I} \  \cup \  \{B(w_k,s_k)\}_{k \in \K} \right) \leq D - \frac{2}{\beta} \ . 
\end{equation*}
Then, by \cref{Lem:SmoothDiam}, we conclude that $\diam(C) \leq D$.

    \section{Separating Hyperplane Algorithms as an SDP}\label{Sec:SepHyperplane}
In the remainder of this paper, we provide \edit{three} applications of our interpolation theorems. In this section, we consider a family of simple algorithms computing an element of a convex set by iteratively using a separating hyperplane oracle. Our interpolation theorems allow us to quantify the worst-case stopping time of such a method over any smooth and/or strongly convex set with \edit{an interior point} via semidefinite programming. Motivated by and confirming our numerical results, we identify a simple minimax optimal separating hyperplane algorithm.

For any closed convex set $C \subseteq \R^d$, we denote the set of (unit) separating hyperplanes of $C$ at some $x\in\R^d$ by
\begin{equation*}
    SH_C(x) = \{ n\in\R^d \mid \|n\|=1,\ \langle n, y-x \rangle \leq 0 \quad \forall y\in C\} \ .
\end{equation*}
Note this set is nonempty exactly when $x\notin \interior C$. Here, our primary interest is in designing algorithms constructing a member of $\interior C$ using a sequence of separating hyperplane oracle queries. \edit{For example, this generalizes convex minimization, seeking an $\epsilon$-minimizer where each subgradient provides such a separating hyperplane to the $\epsilon$-level set.} As a general form of method with $N$ queries, consider any iteration producing points $x_i$ via
\begin{equation} \label{Eqn:SH-general-step}
    x_{i+1} = x_i - \sum_{j=0}^i H_{i,j} n_j, \qquad n_i\in SH_C(x_i) \ , \qquad \forall i=0,\dots,N-1
\end{equation}
parameterized by the lower triangular matrix of stepsizes $H$. Note this method must halt once $x_{i}\in\interior C$ as $SH_C(x_i)$ is empty.

Given fixed $\alpha,\beta,\delta,R$, we consider the problem finding a feasible point to any set $C \in \C_{\alpha,\beta,\infty}$ containing some $q\in\Interior{\delta} C$ and any initialization $x_0$ with $\|x_0-q\|\leq R$. Since no diameter bound is enforced on $C$, our interpolation conditions are necessary and sufficient (i.e., \cref{Thm:Interpolation_NoSlack} applies).

The question of whether a problem instance exists where an algorithm $H$ can fail to construct some $x_i\in \interior C$ with $i=0,\dots, N$ corresponds to
\begin{equation*}
    \exists x_i, n_i, q, C \qquad \mathrm{s.t.}\qquad \begin{cases} 
        & C \in \C_{\alpha,\beta,\infty} \\
        & B(q,\delta) \subseteq C \\
        & n_i\in SH_C(x_i) \\
        & \|x_0 - q\| \leq R \\
        & x_{i+1} = x_i - \sum_{j=0}^i H_{i,j} n_j \ .
    \end{cases}
\end{equation*}
A successful algorithm design would be a selection of $H$ such that no solution to the above system exists, as a failure to have $n_k\in SH_C(x_k)$ exist for some $k$ implies $x_k\in\interior C$. Noting that every separating hyperplane $n_i$ must be normal to the set $C$ at some $z_i$, this system can be rewritten in terms of interpolation with $\I=\{0,\dots,N\}$. \cref{Thm:Interpolation_NoSlack} enables this decision problem to be described as a system of quadratic inequalities using our first three interpolation conditions~\eqref{Eqn:Condition1}-\eqref{Eqn:Condition3}:
\begin{equation}
    \exists x_i, z_i, n_i, q, w \qquad \mathrm{s.t.}\qquad \begin{cases} 
        & \|z_i-\oa n_i - (z_j - \ob n_j)  \|^2 \leq \frac{1}{\gamma^2} \\
        & \| z_i - \oa n_i - w \|^2 \leq (\og - s)^2 \\
        & \|q - w \|^2 \leq (\ob - \delta + s)^2 \\ 
        & \langle n_i, z_i - x_i\rangle \leq 0\\
        & \|n_i\|^2=1 \\
        & \|x_0 - q\|^2 \leq R^2 \\
        & x_{i+1} = x_i - \sum_{j=0}^i H_{i,j} n_j
    \end{cases}\label{Eqn:OptSH-QCQP}
\end{equation}
where $\gamma = (\oa - \ob)^{-1}$ and $s = \max\{0, \delta - \ob \}$.
Without loss of generality, we fix $q = 0$.
Assuming $d\geq 2N+4$, a Gram matrix reformulation of this quadratic system yields an equivalent SDP feasibility problem with variable $G$ capturing every quadratic term in the original variables $x_0,z_i,n_i,w$, i.e.,
\begin{align*}
        & \gramMatrix = [x_0|z_0|z_1|\dots|z_N|n_0|n_1|\dots|n_N|w] \in \R^{d\times(2N+4)} \ ,\\
        & G = \gramMatrix^T \gramMatrix \in \mathbb{S}_+^{2N+4} \ . 
    \end{align*}
\edit{In the case $d < 2N+4$, this reformulation still represents a valid upper bound on our performance estimation problem, but it is no longer guaranteed to be tight (See \cref{Rem:DimensionBound}).} \cref{App:SH-SDP-reformulation} presents this reformulation in full for the sake of completeness. Such reformulations are widespread in the existing PEP literature. Hence, we can efficiently certify whether a proposed separating hyperplane algorithm is guaranteed to construct a feasible point.

Consider the simple separating hyperplane method fixing $H$ to be diagonal \edit{with constant value}:
\begin{equation} \label{Eqn:SH-constant-step}
    x_{i+1} = x_i - h n_i \ , \qquad n_i\in SH_C(x_i)  \ , \qquad \forall i=0,\dots,N-1 \ .
\end{equation}
For a given family of problem instances fixing $\alpha,\beta,\delta,R$, we can compute the number of iterations needed to guarantee a strictly feasible point by growing $N$ until the corresponding SDP becomes infeasible. Denote this maximal number of steps needed to ensure an interior point is found by $N_{\mathrm{max}}$. \edit{Fixing $h=\max\{\delta,\ob\}$,} \cref{Fig:SepHMaxNData} shows $N_{\mathrm{max}}$ as $\delta$ and $R$ vary under this stepsize. From these numerical results, one can readily identify a formula $N_{\mathrm{max}} = \lfloor \frac{(R+h-\delta)^2}{h^2} \rfloor$.

\begin{figure}
    \centering
    \includegraphics[width=0.95\textwidth]{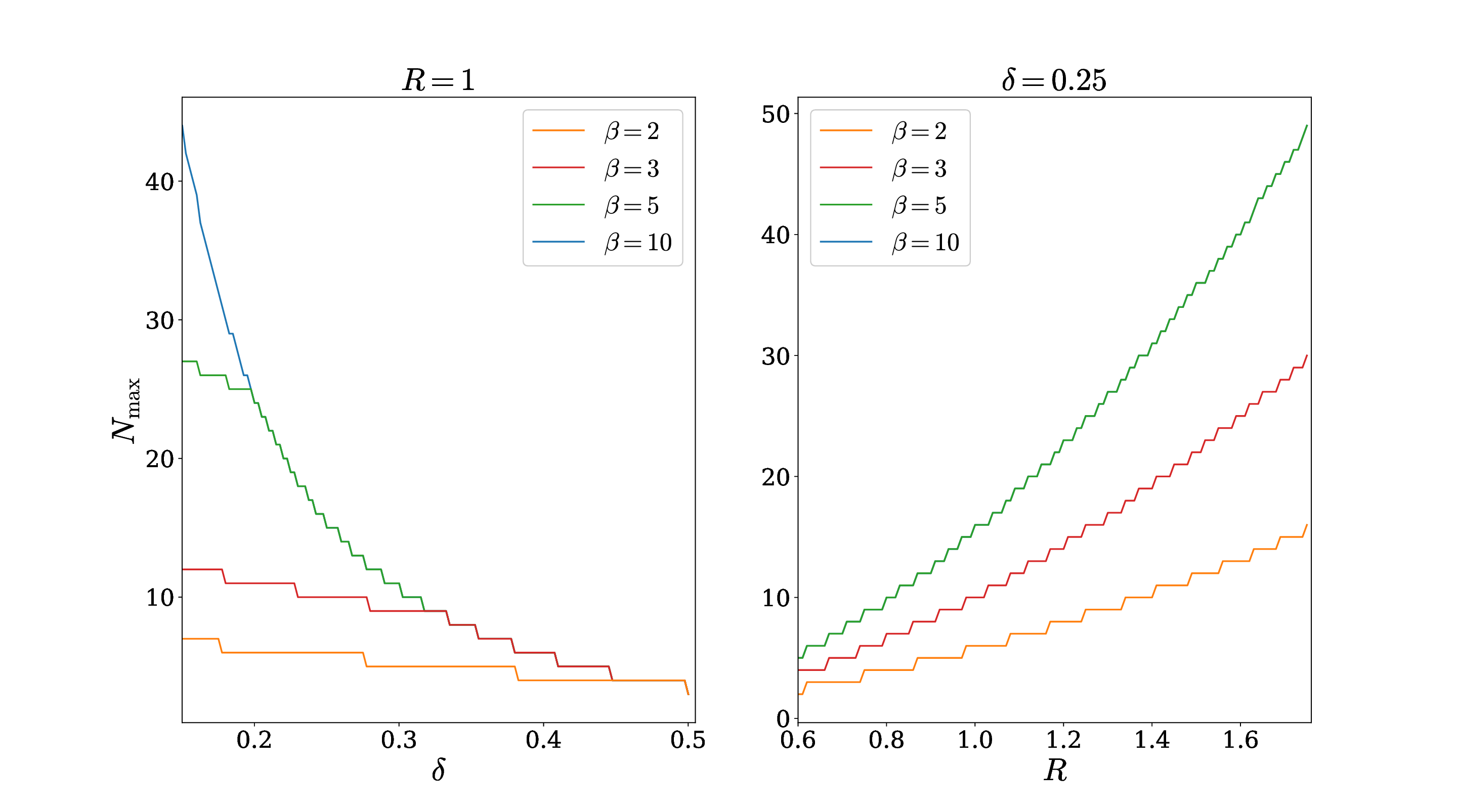}
    \caption{Numerical PEP result for $N_{\mathrm{max}}$ given a constant stepsize $h = \max \{\delta, \ob \}$ as $\delta$ varies on the left and $R$ on the right. In the second plot, the line for $\beta=10$ is covered by $\beta=5$.}
    \label{Fig:SepHMaxNData}
\end{figure}

This simple method and numerically observed rate turn out to be the minimax optimal method and guarantee for any separating hyperplane method of the general form~\eqref{Eqn:SH-general-step}. \edit{\cref{App:SH-minimax-optimal-proof} presents an elementary proof of this fact.}
\begin{theorem}\label{Thm:SH-minimax-optimal}
    For any $C \in \mathcal{C}_{\alpha,\beta,D}$ with $q \in \Interior{\delta} C$ and $\|x_0 - q \| \leq R$, the iteration~\eqref{Eqn:SH-constant-step} with stepsize $h=\max\{\delta,\ob\}$ must halt with $x_i \in \interior C$ by iteration $i \leq N_{\mathrm{max}}:= \lfloor \frac{(R+h-\delta)^2}{h^2} \rfloor$.
    Moreover, there exists $C,q,x_0$ \edit{and separating hyperplane oracle} such that for any method~\eqref{Eqn:SH-general-step} has $x_0,\dots,x_{N_{\mathrm{max}}-1}\notin \interior C$.
\end{theorem}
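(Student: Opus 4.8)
The plan is to split the theorem into an upper bound (the constant-step method always halts by iteration $N_{\mathrm{max}}$) and a matching lower bound (a single hard instance forcing every method to run for $N_{\mathrm{max}}$ iterations).

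For the upper bound, I would argue that as long as $x_i \notin \interior C$, the algorithm can keep querying, and I need to show the iterates must leave the complement of $C$ within $N_{\mathrm{max}}$ steps. The key geometric fact is that $q \in \Interior{\delta} C$, so $B(q,\delta) \subseteq C$; hence for any separating hyperplane $n_i \in SH_C(x_i)$ we have $\langle n_i, y - x_i\rangle \le 0$ for all $y \in B(q,\delta)$, which gives $\langle n_i, q - x_i\rangle \le -\delta$. Using $x_{i+1} = x_i - h n_i$ and $\|n_i\|=1$, I would track the potential $\|x_i - q\|^2$: expanding,
\[
\|x_{i+1}-q\|^2 = \|x_i - q\|^2 - 2h\langle n_i, x_i - q\rangle + h^2 \le \|x_i-q\|^2 - 2h\delta + h^2 .
\]
This already shows linear-in-$i$ decrease when $h < 2\delta$, but for the sharp count one must also use smoothness: when $\delta < \ob$ (so $h = \ob$), a stronger separation is available because $C$ contains a ball of radius $\ob$ tangent at the nearby boundary point, or equivalently because each normal $n_i$ to $C$ arises at some $z_i \in \bdry C$ with $z_i - \ob n_i$ the center of an inner ball of radius $\ob$. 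So $\langle n_i, (z_i - \ob n_i) - x_i\rangle = \langle n_i, z_i - x_i\rangle - \ob \le -\ob$, i.e.\ the step $h n_i = \ob n_i$ moves $x_i$ by at least $\ob$ toward being inside. Combining both regimes with $h = \max\{\delta,\ob\}$, I expect the recursion $\|x_{i+1}-q\|^2 \le \|x_i - q\|^2 - (\text{something}) $ together with an endgame argument (once $\|x_i - q\|$ is small enough, $x_i \in B(q,\delta) \subseteq \interior C$, so the method has halted) to yield exactly $i \le \lfloor (R+h-\delta)^2/h^2\rfloor$. I'd be careful to get the $+h-\delta$ correction right: the iterate can overshoot slightly past the boundary of $B(q,\delta)$ and still not be interior to $C$ in the strongly convex case, which is what the $(R + h - \delta)$ rather than $R$ encodes.

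For the lower bound, the idea is to build an adversarial instance where the separating hyperplane oracle always returns the ``least informative'' hyperplane, forcing every method (not just the constant-step one) to make as little progress as possible. I would place $q = 0$, take $x_0$ at distance exactly $R$ from $q$, and design $C$ so that all returned normals point in essentially the same direction as $x_0 - q$; a natural candidate is a slab/ball arrangement in which $C$ is (a smooth/strongly-convex approximation of) a halfspace whose boundary hyperplane is perpendicular to $x_0 - q$ at the appropriate offset, so every separating hyperplane query returns the fixed unit vector $n = \unit{x_0 - q}$. Against such an oracle, after the method has taken $N$ steps the point $x_N = x_0 - (\sum_j H_{N-1,j}) n$ lies on the ray from $x_0$ toward $q$, and the adversary can retroactively choose $C$ (consistent with all the revealed normals via our interpolation theorem, \cref{Thm:Interpolation_NoSlack}, since $\alpha$ or $D$ constraints are as required) so that $x_0,\dots,x_{N_{\mathrm{max}}-1}$ all lie outside $\interior C$. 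The interpolation machinery is exactly what legitimizes this: I would write the desired configuration of boundary points $z_i$, normals $n_i = n$, and the Slater point as observation data $S$, verify the conditions \eqref{Eqn:Condition1}--\eqref{Eqn:Condition3} (and the diameter ones when $D$ is finite, using $D \ge \max\{\delta, 2/\beta\}$), and conclude a genuine $C \in \mathcal{C}_{\alpha,\beta,D}$ realizing it exists.

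The main obstacle I anticipate is the lower bound's simultaneity: the adversary must pick $C$ before seeing the full trajectory, yet the trajectory depends on $C$ through the oracle answers. The standard resolution in the PEP literature—and what I would do here—is to exploit that when all oracle answers are the fixed vector $n$, the iterates are determined by $H$ alone and lie on a one-dimensional line, so one can design a single $C$ that is consistent with returning $n$ at every queried point while keeping the first $N_{\mathrm{max}}$ iterates non-interior; the interpolation theorem reduces this to checking a finite system of inequalities in scalars (the signed positions along the line), which should come down to the same arithmetic $\lfloor (R+h-\delta)^2/h^2\rfloor$ as the upper bound, closing the gap. A secondary technical point is handling the degenerate limiting parameter values ($\alpha = 0$, $\beta = \infty$, $D = \infty$) uniformly, but the conventions already fixed in \cref{Sec:Interpolation} (e.g.\ $\og = 0$ when $\alpha = \beta$) make this bookkeeping rather than a real difficulty.
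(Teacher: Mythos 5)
Your upper-bound sketch, run with the potential $\|x_i-q\|^2$, does not reach the stated rate and in fact can fail outright. From $\langle n_i,q-x_i\rangle\le-\delta$ you get $\|x_{i+1}-q\|^2\le\|x_i-q\|^2-2h\delta+h^2$; when $h=\ob>2\delta$ the increment $h^2-2h\delta$ is positive and the potential \emph{increases}, and even for $\delta<h<2\delta$ the per-step decrease $h(2\delta-h)$ is strictly smaller than $h^2$, so the arithmetic does not close. You correctly sense that smoothness must be invoked, but the moving reference point $z_i-\ob n_i$ that you propose changes every iteration, so the telescope collapses. The missing idea in the paper is to first produce a \emph{single fixed} point $\hat q$ with $B(\hat q,h)\subseteq C$ and $\|q-\hat q\|\le h-\delta$ (taking $\hat q=q$ when $\delta\ge\ob$, and otherwise sliding $q$ toward the nearest boundary point $z$ and setting $\hat q=z-\ob\,\unit{z-q}$), then run the potential argument on $\|x_i-\hat q\|^2$: taking $y=\hat q+hn_i\in B(\hat q,h)\subseteq C$ gives $\langle n_i,x_i-\hat q\rangle\ge h$, hence $\|x_{i+1}-\hat q\|^2\le\|x_i-\hat q\|^2-h^2$, and the bound follows from $\|x_0-\hat q\|\le R+h-\delta$ together with $\|x_N-\hat q\|\ge h$ if $x_N\notin\interior C$.

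Your lower-bound plan is also broken. Fixing a single direction $n=\unit{x_0-q}$ and returning it at every query makes the instance one-dimensional, and then a method with a single adaptive step $x_1=x_0-Rn$ lands at $q\in\Interior{\delta}C\subseteq\interior C$, defeating the construction whenever $N_{\mathrm{max}}>1$. The interpolation theorem cannot rescue this: it only certifies the existence of a set consistent with observed data, but here the observed data themselves (iterates all non-interior, normals all parallel) are what cannot be arranged against every $H$. The paper's construction avoids this by embedding in $\R^{N_{\mathrm{max}}}$, choosing $C=B(\hat q,h)$ with $\hat q=(h,\dots,h)$, $q=(\delta,\dots,\delta)$, $x_0=0$, and having the oracle return a fresh coordinate direction $n_k=-e_{k+1}$ at each step. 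Because $H$ is lower triangular, $x_k$ is supported on the first $k$ coordinates only, so its last coordinate is zero and $\|x_k-\hat q\|\ge h$ forces $x_k\notin\interior C$ for all $k\le N_{\mathrm{max}}-1$, regardless of $H$. This dimension-ramping trick is the essential ingredient your sketch lacks.
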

In this case, the proof of this minimax optimality theorem is sufficiently simple that one could have reached these conclusions without computer-assistance. \edit{Nevertheless, these results demonstrate the new possibilities of our set interpolation theory}. The numerical insights generated in the following two sections escalate in complexity beyond what is reasonable to do ``by-hand''.

    \section{Frank-Wolfe as an SDP with Nonconvex Constraints}\label{Sec:FW}
In this section, we consider performance estimation of the Frank-Wolfe (conditional gradient) method on smooth and strongly convex sets, \edit{extending the prior analysis of Taylor, Hendrickx, and Glineur~\cite{Interpolation2} in the case of convex sets}. 
This method applies to constrained optimization problems
\begin{equation}\label{Eqn:ConstrainedOpt}
        \min_{x \in C} f(x)
\end{equation}
where $f$ is at least convex and $L$-smooth and $C$ is at least a convex set with diameter at most $D$. At various points, we will consider problem classes with additional structures like $\mu$-strong convexity of $f$ and $\alpha$-strong convexity and $\beta$-smoothness of $C$. We will let $x_\star \in C$ denote a minimizer of~\eqref{Eqn:ConstrainedOpt} which exists by compactness.

\edit{Given a} gradient oracle for $f$ and a linear minimization oracle for $C$, the Frank-Wolfe method generates iterates $x_1, \dots, x_N\in C$ via
\begin{align}\label{Eqn:FrankWolfe}
    & z_k \in \argmin_{y \in C} \langle \nabla f(x_k), y \rangle \tag{FW} \\
    & x_{k+1} = (1-h_k)x_k + h_k z_k \nonumber
\end{align}
for some stepsize schedule $h = (h_0, \dots, h_{N-1})$. Such stepsizes are sometimes called ``open-loop''. Typically, stepsizes follow $h_k = \frac{2}{k+2}$ or more generally $h_k = \frac{\ell}{k+\ell}$ for some $\ell > 0$. In all cases, we will assume that $0 \leq h_k \leq 1$ for all $k$, ensuring $x_{k+1}$ remains a convex combination of $x_0$ and all $z_i$. 

\subsection{Frank-Wolfe PEP Formulation and Computational Approaches} \label{Subsec:FW-Derive-PEP}

For a fixed algorithm, determined by fixing \edit{$h$} above, its worst-case performance after $N$ steps can be formulated as a performance estimation \edit{problem.}
\edit{The} worst-case \edit{minimum} objective gap seen is given by
\begin{align} \label{Eqn:OptFWInfinite}
    p_\mathrm{FW}(N,h;\mu, L, \alpha, \beta, D) = \begin{cases}
        \max\limits_{f,C,x_0} & \edit{ \min\limits_{k=0,\dots, N}f(x_k) - f(x_\star) } \\
        \text{s.t.} \quad & \edit{x_{i+1} = (1-h_i)x_i + h_i z_i} \\
        & \edit{z_i \in \argmin_{i \in C} \langle \nabla f(x_i), y \rangle} \\
        & x_0 \in C \\
        & C \in \C_{\alpha,\beta,D} \\
        & f \in \mathcal{F}_{\mu,L} \\
        & x_\star \in \argmin_{y \in C} f(y) \ . 
    \end{cases}
\end{align}
\edit{From first-order optimality, $z_i \in \argmin_{y \in C} \langle \nabla f(x_i), y \rangle$ can be replaced by the equivalent condition $-\nabla f(x_i) \in N_C(z_i)$. Then our interpolation theory provides a reformulation as a solvable finite problem. See \cref{App:FW-Extra} for further details supporting this reformulation. For ease of exposition, in what follows}, we consider the specific case of optimizing $L$-smooth functions over $\beta$-smooth \edit{sets}, but analogous derivations follow for any combination of smoothness and strong convexity of functions and sets. \edit{We will separately consider the cases \edit{$\nabla f(x_\star)= 0$ and $\nabla f(x_\star) \neq 0$, here focusing on $\nabla f(x_\star) \neq 0$}. This separation is further motivated by classic results in the literature \cite{Wolfe,Guelat1986,garber2015faster,Wirth_FWAccel} where the behavior of the Frank-Wolfe algorithm depends significantly on whether $x_\star \in \interior C$. %To solve \eqref{Eqn:OptFWInfinite} without any assumptions on $x_\star$, one must solve the PEP under each assumption separately and then take the maximum of the two results.
}

\edit{Using both set and function interpolation theory, we can reformulate~\eqref{Eqn:OptFWInfinite} as the following QCQP with additional separable nonconvex equality constraints.} \edit{Letting $\theta=(h,\mu,L,\alpha,\beta,D)$, with $\alpha=0$ for ease of presentation, }
\begin{equation}\label{Eqn:OptFWFinite}
    p_{\mathrm{FW}}(N;\theta) = \left\{ \begin{array}{lll} \max\limits_{\substack{x_k,g_k,f_k \\w_k,z_i,n_i}} \quad & \fmin - f_\star \\
        \text{s.t.} \quad & \fmin - f_k \leq 0 \quad & \forall k \in \K \\
        &  f_k - f_\el +\langle g_k,x_\el-x_k\rangle +\frac{1}{2L}\| g_k - g_\el\|^2 \leq 0 \quad & \forall k,\el \in \KStar \\
        & x_{i+1} = (1-h_i)x_i + h_i z_i \quad & \forall i \in \I \\
        & \langle -g_i, z_j-\frac{1}{\beta}n_j - (z_i -\frac{1}{\beta}n_i) \rangle \leq 0 \quad & \forall i,j \in \IStar \\
        & \langle -g_i, w_k - (z_i - \frac{1}{\beta}n_i) \rangle \leq 0 \quad & \forall i \in \IStar, k \in \KStar \\
        & \|x_k - w_k \|^2 \leq \obsq \quad & \forall k \in \KStar \\
        & \| z_i - \ob n_i - (z_j - \ob n_j) \|^2 \leq (D-\frac{2}{\beta})^2 \quad & \forall i,j \in \IStar \\
        & \|z_i - \ob n_i - w_k \|^2 \leq (D-\frac{2}{\beta})^2 \quad & \forall i \in \IStar, k \in \KStar \\
        & \|w_k - w_\el \|^2 \leq (D-\frac{2}{\beta})^2 \quad & \forall k,\el \in \KStar \\
        & \langle g_i, n_i \rangle \leq 0  \quad & \forall i \in \IStar \\
        & \| n_i \|^2 = 1 \quad & \forall i \in \IStar \\
        & \langle g_i, n_i \rangle^2 = \| g_i \|^2 \quad & \forall i \in \IStar \ .
    \end{array}\right.
\end{equation}

\noindent Note that the last three constraints \edit{above} are equivalent to the condition $n_i = \frac{-g_i}{\|g_i\|}$.
\edit{In the case that $g_\star = 0$, these constraints must be removed as $n_\star$ is undefined, requiring a branch in our formulation based on $g_\star$.}

\edit{One} can repeat the ``standard'' Gram matrix reformulation approach discussed in \cref{Sec:SepHyperplane}. Assuming $d \geq 4N+6$, one can derive an equivalent problem with variables $G$ and $F$ as
\begin{align*}
    & F = [f_0|f_1|\dots|f_N|\fmin] \in \R^{1 \times (N+2)} \\
    & \gramMatrix = [x_0|g_\star|g_0|g_1|\dots|g_N|z_0|\dots|z_{N-1}|n_\star|n_0|\dots|n_{N-1}|w_\star|w_0|\dots|w_N] \in \R^{d\times(4N+6)} \\
    & G = \gramMatrix^T \gramMatrix \in \mathbb{S}_+^{4N+6} \ .
\end{align*}
\edit{We omit the restatement of this program after this standard change of variables.} This Grammian change-of-variables reformulates all of the constraints of \eqref{Eqn:OptFWFinite} as linear constraints in our new variables, with the exception of the equality constraint $\langle g_i, n_i \rangle^2 = \| g_i \|^2$ which is quadratic in $G$. This constraint is nonconvex, and therefore cannot be expressed through a convex SDP.
\edit{We present three methods of solving or bounding the PEP \eqref{Eqn:OptFWFinite}, in order of increasing computational difficulty. In any case, we introduce the redundant constraints $\langle -g_i, n_j \rangle \leq \langle -g_i, n_i \rangle$ for all $i,j \in \IStar$, which prove useful in our relaxations below.}

\edit{As a simplest approach, one can relax the constraint $\langle g_i, n_i \rangle ^2 = \|g_i\|^2$ to $\langle g_i, n_i \rangle ^2 \leq \|g_i\|^2$, which is convex in $G$, providing a upper bound for \eqref{Eqn:OptFWInfinite}. This is similar to the original performance estimation approach of Drori and Teboulle \cite{FirstPEP}. While excluded from our numerics, this relaxation enables computations of performance upper bounds up to larger values of $N$ ($\sim 50$).} 

\edit{Alternatively, one can reformulate \eqref{Eqn:OptFWFinite} by introducing new variables $\psi_i$ to parameterize the nonconvex constraint.
Given $\psi = (\psi_\star, \psi_0, \dots, \psi_{N-1} ) \in \R^{N+1}_{\geq 0}$, one can define the parameterized problem $\pFWSubproblem(\psi;\theta)$ by adding the constraints $\|g_i\|^2 = \psi_i$ and $\langle -g_i, n_i \rangle = \sqrt{\psi_i}$, which are linear in $G$. Finding a local solution to \eqref{Eqn:OptFWFinite} then amounts to zeroth-order local maximization of 
$\pFWSubproblem(\psi; \theta)$ with respect to $\psi$, yielding a lower bound for \eqref{Eqn:OptFWFinite}.} This approach was computationally feasible to run up to $N\leq 15$.

\edit{Finally, complementing the above local computation of lower bounds, one can globally optimize~$\eqref{Eqn:OptFWFinite}$ via a simple branch-and-bound-type procedure. By Cauchy-Schwarz, $\langle -g_i, n_i \rangle \leq \|g_i\|$. Then one can apply piecewise McCormick relaxations \cite{McCormick_1976,PiecewiseMcCormick} to create a partition of convex relaxations of our equality condition $\langle -g_i, n_i \rangle  = \|g_i\|$. One can then globally optimize by iteratively refining this partition for each $i$. Given this procedure's computational cost scales exponentially in $N$, our computations of such global upper bounds were limited to $N\leq 5$. When tractable, these numerical upper bounds (denoted by dots in our figures) always closely aligned with our numerical lower bounds, together certifying tight bounds on the true optimal value.}

\subsection{\edit{Survey of Smooth/Strongly Convex Settings for Acceleration}} \label{Subsec:FWAccelSurvey}
\edit{Bounds on Frank-Wolfe's worst-case convergence rates are well-studied. For the most general setting of convex bounded $C$ and $L$-smooth convex $f$, the best known upper bound on convergence rates in this setting is due to Jaggi~\cite{Jaggi_FW}, establishing that with the ``standard'' stepsize sequence $h_k = \frac{2}{k+2}$, the objective gap converges at rate\footnote{Note that while the rate proven by Jaggi describes the objective gap at the terminal iterate, it holds as an equally valid bound for our objective $\min_k f(x_k) - f(x_\star)$.} at least $\frac{2LD^2}{N+2}$. In \cite[Section 4.5]{Interpolation2}, Taylor et al. previously showed through PEP experiments that this rate is slack.  The best known lower bound for Frank-Wolfe over convex sets is $\frac{LD^2}{4N}$, from Lan~\cite{Lan_LinearOptOracle}, establishing potential to improve by up to a factor of eight.

Given more structure on $C$ and/or $f$, one could hope for further gains.} Frank-Wolfe's worst-case convergence rate is known to accelerate up to $O(1/N^2)$ in a few settings: given a strongly convex $f$ and $C$~\cite{garber2015faster,Wirth_FWAccel} or given a strongly convex $f$ and $x_\star\in\mathrm{int}_{\delta} C$~\cite{Guelat1986,Wolfe,Wirth_FWAccel}. However, it remains undetermined if acceleration can be achieved in any intermediate settings, including any case with smooth constraint sets.

\begin{figure}
    \centering
    \includegraphics[width=1.0\textwidth]{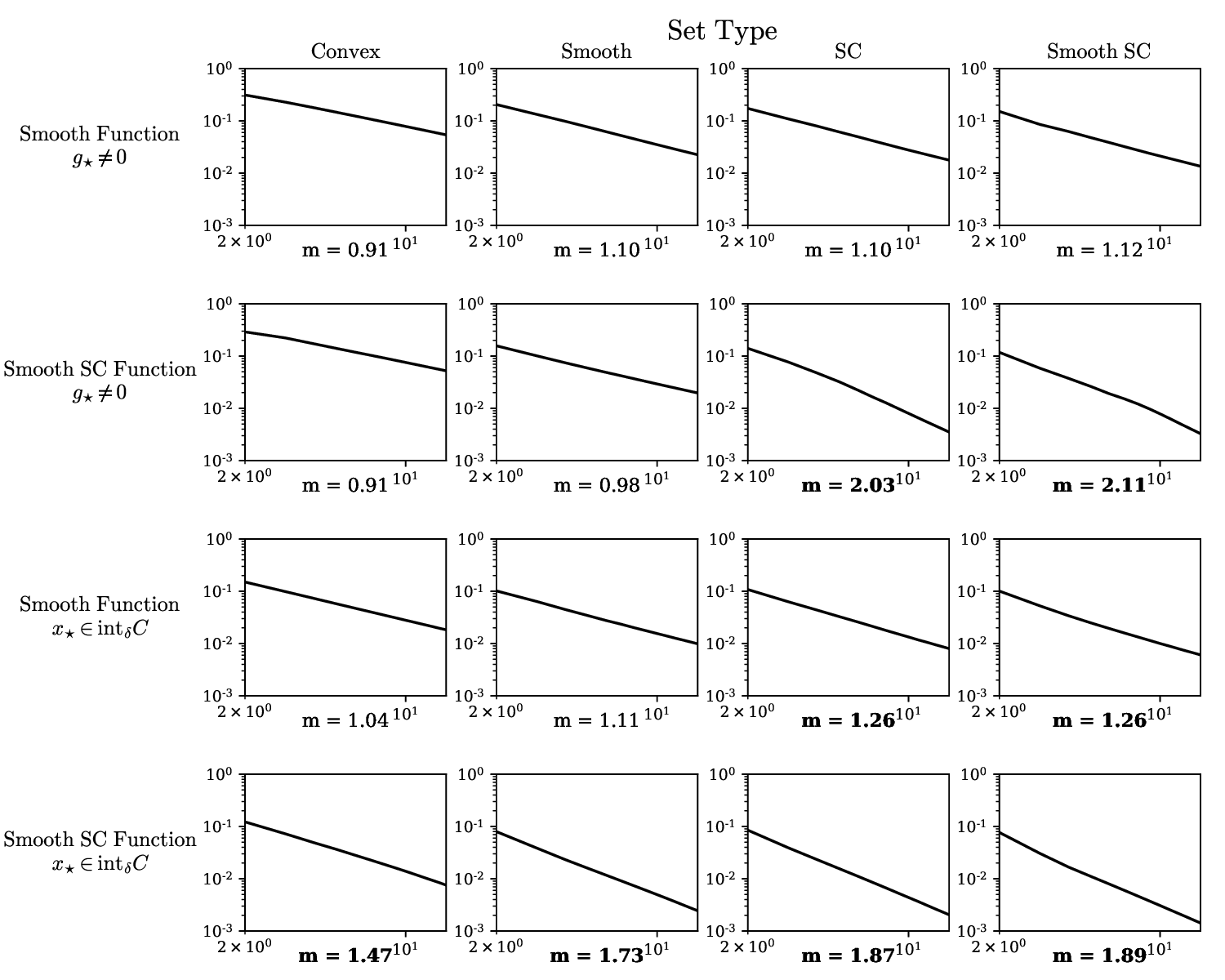}
    \caption{Survey of Frank-Wolfe's performance with standard stepsizes on $16$ different problem \edit{classes. Experiments are run with stepsizes $h_k = \frac{2}{k+2}$ and with $L=1$, $\mu=0.5$, $\beta=5$, $\alpha=1$, $D = 1$ and $\delta = 0.25$ where applicable. We} highlight in bold the problem settings in which the estimated rate $O(1/N^m)$ noticeably exceeds the standard $O(1/N)$ rate.}
    \label{Fig:FWAllSettings}
\end{figure}

Using our expanded performance estimation toolbox, we explore \edit{$16$} permutations of problem instances: $C$ being smooth or not; $C$ being strongly convex or not; $f$ being strongly convex or not; the minimizer having \edit{$g_\star\neq 0$} \edit{or having} $g_\star=0$ with $x_\star$ strictly interior to $C$. This enables us to assess if other settings achieve $O(1/N^2)$ convergence. \edit{Note that line-search or alternate assumptions on $f$ (e.g., uniformly bounded gradients: $\|\nabla f(x)\| \geq \edit{\epsilon > 0}$ for all $x \in C$) have shown improved (and sometimes linear) convergence~\cite{Wolfe,garber2015faster,WirthPena_AffineFW,Lan_LinearOptOracle} but lie outside the scope of this preliminary survey.} In \cref{Fig:FWAllSettings}, we show our results for each of these settings for $N=2,\dots, 15$ \edit{with stepsizes $h_k = \frac{2}{k+2}$}, and estimate a rate $O(1/N^m)$ by regression on the values from $N=8,\dots, 15$. 

Our results show no clear evidence of any new problem settings that achieve $O(1/N^2)$ acceleration, \edit{with most problem settings} having $m\approx 1$. However, in the settings with smooth functions, strongly convex sets, and an interior optimal point, our results seem to noticeably outperform the standard $O(1/N)$ rate, instead being closer to $O(1/N^{1.2})$. This presents an intriguing area for future analysis. \edit{It is worth noting that some known acceleration results (e.g.,~\cite{Wirth_FWAccel}) require a burn-in phase, so} it remains possible that acceleration in some of these interim settings could be missed by our survey.

\begin{remark}[Improved Convergence Rates from Set Smoothness] \edit{We now briefly consider Frank-Wolfe's performance over $\beta$-smooth sets. To date, no theory shows Frank-Wolfe benefits from smoothness in the constraint set $C$. Since the lower bound $\frac{LD^2}{4N}$ of~\cite{Lan_LinearOptOracle} uses the simplex as a constraint set (which is nonsmooth), smoothness has the potential to break this bound. To focus on the effect of smoothness,} we fix $\alpha=0$, enabling the necessary and sufficient \cref{Thm:Interpolation_NoSlack} to apply.
\edit{\cref{Fig:WirthComparison}(a) provides an immediate positive answer to whether smoothness of constraints improves Frank-Wolfe's performance.} Local solves for our exact, nonconvex PEP formulation show up to an order of magnitude improvement in performance by $N=15$ over the convex setting $\beta=\infty$. \edit{For $\beta=2$ smooth sets, performance strictly outperforms Lan's lower bound.}
\end{remark}

\begin{remark}[Gaps in Accelerated Theory for Strongly Convex Functions] In~\cite{garber2015faster}, Garber and Hazan proved that for strongly convex functions, Frank-Wolfe attains an accelerated convergence rate of $O(1/N^2)$ if the constraint set $C$ is strongly convex ($\alpha > 0$), or if $x_\star \in \Interior{\delta} C$ ($\delta>0$). Their result relied on stepsize selection by \edit{linesearch, but}
Wirth et al.\ later proved in \cite{Wirth_FWAccel} that similar $O(1/N^2)$ convergence rates were attained in these settings using the open-loop sequence $h_k = \frac{4}{k+4}$. Specifically, in \cite[Theorem E.1]{Wirth_FWAccel}, if $C$ is additionally $\alpha$-strongly convex, then Frank-Wolfe has
\begin{equation}\label{Eqn:WirthRate_SCSet}
    f(x_N) - f(x_\star) \leq \frac{\frac{128L^2}{\alpha^2\mu} + 8LD^2}{(N+2)^2}  \ .
\end{equation}
Alternatively, in \cite[Theorem 3.6]{Wirth_FWAccel}, they show that if $x_\star \in \Interior{\delta} C$, then letting $M = \lceil 64LD^2/(\mu \delta^2) \rceil$, for any $N\geq M$, Frank-Wolfe satisfies
\begin{equation}\label{Eqn:WirthRate_Interior}
    f(x_N) - f(x_\star) \leq \max \left\{ \frac{(M+3)^2}{(N+2)^2}(f(x_M) - f(x_\star)), \quad \frac{\frac{128L^2D^6}{\mu \delta^4} + 8LD^2}{(N+2)^2} \right\} \ .
\end{equation}

In \cref{Fig:WirthComparison}(b), we compare the PEP result for $h_k = \frac{4}{k+4}$ ($\ell = 4$) with the convergence rate \eqref{Eqn:WirthRate_SCSet}. \edit{In this setting of strongly convex sets with bounded diameter, we can no longer apply our necessary and sufficient interpolation of \cref{Thm:Interpolation_NoSlack}. As a consequence, by applying our interpolation conditions of $\mathrm{Interp}(\alpha, 0, D; 1)$, we only obtain upper bounds on the method's worst-case performance.} These approximate PEP results outperform the guarantee of \eqref{Eqn:WirthRate_SCSet} by about two orders of magnitude.
Additionally, we include in \cref{Fig:WirthComparison}(b) the performance for other values of $\ell$ (with $h_k = \frac{\ell}{k + \ell}$), seeing $\ell = 2$ performed best among the tested values and, in particular, outperforms $\ell = 4$.
\edit{We repeat} this analysis for the setting of a strongly convex function with an interior optimal point instead of a strongly convex constraint set. Note our necessary and sufficient \cref{Thm:Interpolation_NoSlack} then applies. Our results in \cref{Fig:WirthComparison}(c) indicate that much tighter analysis is possible here as well\footnote{Since \eqref{Eqn:WirthRate_Interior} only applies after $M = \lceil 64LD^2/(\mu\delta^2)\rceil \geq 256$ iterations, we cannot compare directly with \eqref{Eqn:WirthRate_Interior} due to our computational limits around $N=15$.} with $\ell = 1$ performing best.
\end{remark}

\begin{figure}[t]
    \centering
    \includegraphics[width=1.0\textwidth]{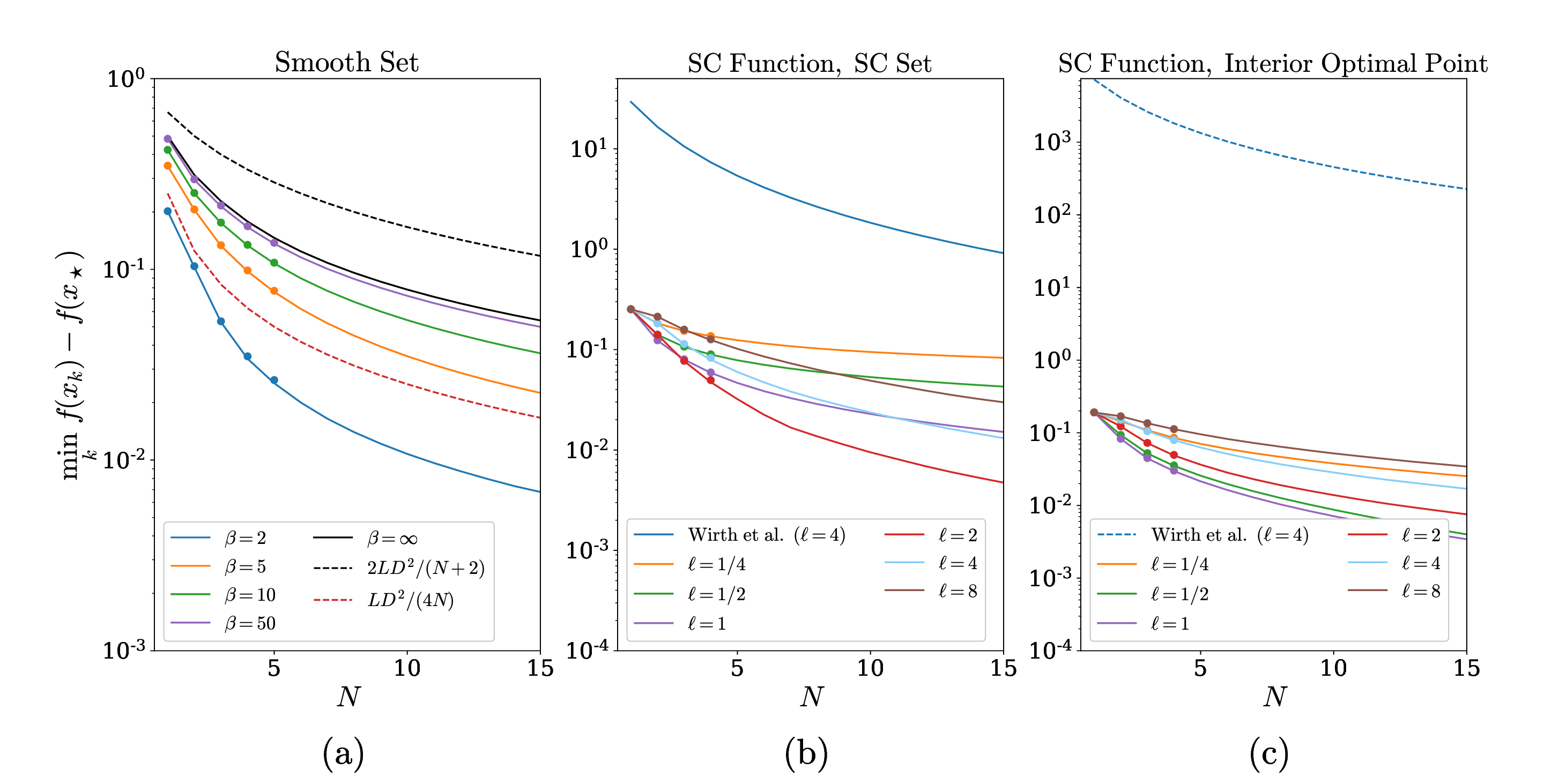}
    \caption{\edit{(a) PEP results for Frank-Wolfe over $\beta$-smooth sets with  $h_k = \frac{2}{k+2}$ and for comparison, the lower and upper bounds of \cite{Jaggi_FW} and \cite{Lan_LinearOptOracle} for convex sets. (b) Results for $\mu$-strongly convex functions over $\alpha$-strongly convex sets with $h_k = \frac{\ell}{k+\ell}$ and for comparison, the bound \eqref{Eqn:WirthRate_SCSet}, fixing $\mu = 0.5$ and $\alpha = 1.0$. (c) PEP results for $\mu$-strongly convex functions with $x_\star \in \Interior{\delta} C$ with $h_k = \frac{\ell}{k+\ell}$, fixing $\mu=0.5$ and $\delta = 0.25$. Although the bound~\eqref{Eqn:WirthRate_Interior} does not apply until $N \geq 256$, it is included, dotted. All results above are obtained via local optimization but verified globally for small $N$, as indicated by individual points on the plot.}}
    \label{Fig:WirthComparison}
\end{figure}

    \section{Gradient Methods for Epismooth Functions}\label{Sec:EpiSmooth}
As a final application, we show that our set interpolation theory can provide novel insights even in the context of unconstrained minimization. In particular, consider a gradient method \edit{\eqref{Eqn:GM} applied to minimize some differentiable, convex function $f:\R^d \to \R\cup\{\infty\}$.}
Much of the modern theory for gradient methods assumes $f$ is $L$-smooth. This is a major restriction from the family of all differentiable functions, ruling out, for instance, all polynomials with degree greater than two.

Here, we consider applying first-order methods to a larger class of differentiable functions, namely functions with a smooth epigraph. 
We say $f$ is {\bf $L$-epigraphically smooth} ({\bf $L$-epismooth}) if its epigraph $\mathrm{epi\ }f = \{(x,t) \mid f(x)\leq t\}$ is an $L$-smooth set. \cref{Thm:SetFuncInterp_Smooth} with $\eta=1$ establishes that all $L$-smooth functions are $L$-epismooth, so this is a strictly broader class of functions.
Note epismoothness of $f$ does not guarantee that $f$ has a full domain of $\R^d$ (\edit{consider a simple U-shaped function}). Consequently, one cannot guarantee the iteration~\eqref{Eqn:GM} is well-defined in general, so additional care is needed. To handle this, we consider modified gradient methods of the form
\begin{equation} \label{Eqn:Epi-GM}
    x_{i+1} = x_i - \frac{1}{L}\sum_{j=0}^i H_{i,j} \frac{\nabla f(x_j)}{\sqp{\|\nabla f(x_j)\|^2}} \ .    \tag{Epi-GM}
\end{equation}
Normalizing our stepsizes above allows methods to keep $x_{i+1}-x_i$ bounded even as $\nabla f(x_k)$ becomes arbitrarily large (as shown in \cref{Fig:EpiSmoothGD_Diagram}). This property can then be used to ensure the iterates do not leave the domain of $f$: For example, fixing $H$ to be the identity matrix, the above iteration becomes the gradient descent iteration $x_{i+1} = x_i - \frac{\nabla f(x_i)}{L \sqp{\|\nabla f(x_i)\|^2}}$, which repeatedly moves to the minimizer of the ball majorizing $f$, ensuring progress every iteration.

\begin{figure}
    \centering
    \scalebox{0.70}{\tikzset{every picture/.style={line width=0.75pt}} %set default line width to 0.75pt        

\begin{tikzpicture}[x=0.75pt,y=0.75pt,yscale=-1,xscale=1]
%uncomment if require: \path (0,394); %set diagram left start at 0, and has height of 394

%Curve Lines [id:da42998136360412365] 

\draw [color={rgb, 255:red, 175; green, 175; blue, 175 }  ,draw opacity=1 ][line width=1.5]    (74.23,40.73) .. controls (78.11,103.62) and (82.73,358.12) .. (265.99,357.94) .. controls (451.12,357.13) and (445.13,109.16) .. (448.88,41.13) ;

%--With arrows:
\draw [shift={(448.99,39.12)}, rotate = 93.52] [color={rgb, 255:red, 175; green, 175; blue, 175 }  ,draw opacity=1 ][line width=1.5]    (14.21,-4.28) .. controls (9.04,-1.82) and (4.3,-0.39) .. (0,0) .. controls (4.3,0.39) and (9.04,1.82) .. (14.21,4.28)   ;
\draw [shift={(73.99,37.12)}, rotate = 85.91] [color={rgb, 255:red, 175; green, 175; blue, 175 }  ,draw opacity=1 ][line width=1.5]    (14.21,-4.28) .. controls (9.04,-1.82) and (4.3,-0.39) .. (0,0) .. controls (4.3,0.39) and (9.04,1.82) .. (14.21,4.28)   ;

%Straight Lines [id:da19439608203853442] 
\draw  [dash pattern={on 4.5pt off 4.5pt}]  (160,160.28) -- (160,311) ;
\draw [shift={(160,313)}, rotate = 270] [color={rgb, 255:red, 0; green, 0; blue, 0 }  ][line width=0.75]    (10.93,-3.29) .. controls (6.95,-1.4) and (3.31,-0.3) .. (0,0) .. controls (3.31,0.3) and (6.95,1.4) .. (10.93,3.29)   ;
%Straight Lines [id:da39516312431823253] 
\draw    (92,180.67) -- (158.08,160.85) ;
\draw [shift={(160,160.28)}, rotate = 163.31] [color={rgb, 255:red, 0; green, 0; blue, 0 }  ][line width=0.75]    (10.93,-3.29) .. controls (6.95,-1.4) and (3.31,-0.3) .. (0,0) .. controls (3.31,0.3) and (6.95,1.4) .. (10.93,3.29)   ;
%Shape: Circle [id:dp6396084983568044] 
\draw  [fill={rgb, 255:red, 0; green, 0; blue, 0 }  ,fill opacity=1 ] (88,180.67) .. controls (88,178.46) and (89.79,176.67) .. (92,176.67) .. controls (94.21,176.67) and (96,178.46) .. (96,180.67) .. controls (96,182.88) and (94.21,184.67) .. (92,184.67) .. controls (89.79,184.67) and (88,182.88) .. (88,180.67) -- cycle ;
%Shape: Circle [id:dp5044848060670195] 
\draw   (90,168.67) .. controls (90,144.92) and (109.25,125.67) .. (133,125.67) .. controls (156.75,125.67) and (176,144.92) .. (176,168.67) .. controls (176,192.41) and (156.75,211.67) .. (133,211.67) .. controls (109.25,211.67) and (90,192.41) .. (90,168.67) -- cycle ;
%Shape: Circle [id:dp44600240174199723] 
\draw  [fill={rgb, 255:red, 0; green, 0; blue, 0 }  ,fill opacity=1 ] (129,168.67) .. controls (129,166.46) and (130.79,164.67) .. (133,164.67) .. controls (135.21,164.67) and (137,166.46) .. (137,168.67) .. controls (137,170.88) and (135.21,172.67) .. (133,172.67) .. controls (130.79,172.67) and (129,170.88) .. (129,168.67) -- cycle ;
%Straight Lines [id:da26686879589606494] 
\draw    (92,180.67) -- (51.9,194.03) ;
\draw [shift={(50,194.67)}, rotate = 341.57] [color={rgb, 255:red, 0; green, 0; blue, 0 }  ][line width=0.75]    (10.93,-3.29) .. controls (6.95,-1.4) and (3.31,-0.3) .. (0,0) .. controls (3.31,0.3) and (6.95,1.4) .. (10.93,3.29)   ;
%Curve Lines [id:da35147362180306785] 
\draw [color={rgb, 255:red, 155; green, 155; blue, 155 }  ,draw opacity=1 ]   (39,138) .. controls (27.24,161.19) and (64.46,159.74) .. (86.66,179.43) ;
\draw [shift={(88,180.67)}, rotate = 223.67] [color={rgb, 255:red, 155; green, 155; blue, 155 }  ,draw opacity=1 ][line width=0.75]    (10.93,-3.29) .. controls (6.95,-1.4) and (3.31,-0.3) .. (0,0) .. controls (3.31,0.3) and (6.95,1.4) .. (10.93,3.29)   ;
%Curve Lines [id:da345807822969604] 
\draw [color={rgb, 255:red, 155; green, 155; blue, 155 }  ,draw opacity=1 ]   (66,242.67) .. controls (61.15,229.09) and (65.71,219.27) .. (74.2,200.44) ;
\draw [shift={(75,198.67)}, rotate = 114.23] [color={rgb, 255:red, 155; green, 155; blue, 155 }  ,draw opacity=1 ][line width=0.75]    (10.93,-3.29) .. controls (6.95,-1.4) and (3.31,-0.3) .. (0,0) .. controls (3.31,0.3) and (6.95,1.4) .. (10.93,3.29)   ;
%Shape: Circle [id:dp9588458482359887] 
\draw  [fill={rgb, 255:red, 0; green, 0; blue, 0 }  ,fill opacity=1 ] (156,160.28) .. controls (156,158.07) and (157.79,156.28) .. (160,156.28) .. controls (162.21,156.28) and (164,158.07) .. (164,160.28) .. controls (164,162.49) and (162.21,164.28) .. (160,164.28) .. controls (157.79,164.28) and (156,162.49) .. (156,160.28) -- cycle ;
%Curve Lines [id:da3212387560867167] 
\draw [color={rgb, 255:red, 155; green, 155; blue, 155 }  ,draw opacity=1 ]   (218,109.28) .. controls (210.24,141.29) and (190.24,147.75) .. (170.8,155.41) ;
\draw [shift={(168.99,156.12)}, rotate = 338.2] [color={rgb, 255:red, 155; green, 155; blue, 155 }  ,draw opacity=1 ][line width=0.75]    (10.93,-3.29) .. controls (6.95,-1.4) and (3.31,-0.3) .. (0,0) .. controls (3.31,0.3) and (6.95,1.4) .. (10.93,3.29)   ;
%Shape: Circle [id:dp17794400955556489] 
\draw  [fill={rgb, 255:red, 0; green, 0; blue, 0 }  ,fill opacity=1 ] (156,318) .. controls (156,315.79) and (157.79,314) .. (160,314) .. controls (162.21,314) and (164,315.79) .. (164,318) .. controls (164,320.21) and (162.21,322) .. (160,322) .. controls (157.79,322) and (156,320.21) .. (156,318) -- cycle ;
%Straight Lines [id:da18227664813674993] 
\draw [line width=1.5]    (34,358) -- (517,358) ;
\draw [shift={(521,358)}, rotate = 180] [fill={rgb, 255:red, 0; green, 0; blue, 0 }  ][line width=0.08]  [draw opacity=0] (11.61,-5.58) -- (0,0) -- (11.61,5.58) -- cycle    ;
\draw [shift={(30,358)}, rotate = 0] [fill={rgb, 255:red, 0; green, 0; blue, 0 }  ][line width=0.08]  [draw opacity=0] (11.61,-5.58) -- (0,0) -- (11.61,5.58) -- cycle    ;
%Straight Lines [id:da39340452573214835] 
\draw [line width=1.5]    (265,44) -- (264.99,387.12) ;
\draw [shift={(265,40)}, rotate = 90] [fill={rgb, 255:red, 0; green, 0; blue, 0 }  ][line width=0.08]  [draw opacity=0] (11.61,-5.58) -- (0,0) -- (11.61,5.58) -- cycle    ;

% Text Node
\draw (9,115.4) node [anchor=north west][inner sep=0.75pt]  [font=\large]  {$( x_{k} ,f_{k})$};
% Text Node
\draw (9,250.4) node [anchor=north west][inner sep=0.75pt]  [font=\large]  {$\frac{( \nabla f( x_{k}) ,-1)}{L \ \| ( \nabla f( x_{k}) ,-1) \| }$};

% Text Node
%Branch location depending on arxiv vs MP
\draw (\branchEpiCoord,64.4) node [anchor=north west][inner sep=0.75pt]  [font=\large]  {$\left( x_{k} -\frac{h_{k} \nabla f( x_{k})}{L \sqp{\|  \nabla f( x_{k}) \| ^{2}}} ,\ f_{k} +\frac{h_{k}}{L \sqp{\| \nabla f( x_{k}) \| ^{2}}}\right)$};
% Text Node
\draw (170,306.4) node [anchor=north west][inner sep=0.75pt]  [font=\large]  {$( x_{k+1} ,\ f_{k+1})$};

\end{tikzpicture}}
    \caption{Modified gradient method for epismooth functions \eqref{Eqn:GM}.}
    \label{Fig:EpiSmoothGD_Diagram}
\end{figure}

To illustrate the breadth of this class of functions, the following proposition provides a useful means of checking if a function is \textbf{locally} epismooth.
\begin{proposition} \label{Prop:EpiSmooth-Properties}
    For any $f \in \mathcal{C}^2$ and $x$ in the domain of $f$, let
    \begin{equation} \label{Eqn:epismooth-hessian}
        M(x) = \frac{(I + \nabla f(x) \nabla f(x)^T)^{-1/2} (\nabla^2 f(x)) (I + \nabla f(x) \nabla f(x)^T)^{-1/2}}{\sqp{\|\nabla f(x)\|^2}} \ .
    \end{equation}
    Then $f$ is locally $L$-epismooth at $x$ if and only if $L \geq \lambda_\mathrm{max} (M)$.
\end{proposition}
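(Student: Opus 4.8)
The plan is to reduce the statement to a pointwise second-order comparison at $x$ and then recognize the resulting matrix inequality as the claimed eigenvalue bound. Recall from \eqref{Eqn:ballUpperBound} that an $L$-smooth epigraph is exactly characterized by the ball upper bound $f(y)\le\ballUB{x}(y;L)$ holding for all $x,y$, where $\ballUB{x}(\cdot;L)$ traces out the lower hemisphere of the radius-$\ol$ ball inscribed in $\epi f$ and tangent to $\bdry\,\epi f$ at $(x,f(x))$. The corresponding \emph{local} notion---$f$ being locally $L$-epismooth at $x$---amounts to requiring this inequality only for $y$ near $x$. Since the tangency point lies strictly inside the inscribed ball (indeed $\|\nabla f(x)\|/(L\sqp{\|\nabla f(x)\|^2})<\ol$ always), the function $\ballUB{x}(\cdot;L)$ is $C^\infty$ in a neighborhood of $x$, so this local inequality is governed entirely by the second-order Taylor expansions of $f$ and $\ballUB{x}(\cdot;L)$ at $x$.

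First I would check zeroth- and first-order agreement: a direct computation from \eqref{Eqn:ballUpperBound} gives $\ballUB{x}(x;L)=f(x)$ and $\nabla_y\ballUB{x}(x;L)=\nabla f(x)$, so $f$ and $\ballUB{x}(\cdot;L)$ are tangent at $x$. Next I would compute the Hessian. Writing $p=\nabla f(x)$, $G=I+pp^{T}$, and $c=\sqp{\|p\|^{2}}$, differentiating the square-root in \eqref{Eqn:ballUpperBound} twice and evaluating at $y=x$ collapses (using $c^2-\|p\|^2=1$) to the clean identity
\[
    \nabla^{2}_y\,\ballUB{x}(x;L)=Lc\,(I+pp^{T})=Lc\,G \ .
\]
Given the first-order tangency, $f(y)\le\ballUB{x}(y;L)$ on a neighborhood of $x$ is equivalent to $\nabla^{2}f(x)\preceq Lc\,G$. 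Conjugating by $G^{-1/2}$, this is in turn equivalent to $\lambda_{\mathrm{max}}\!\big(G^{-1/2}\nabla^{2}f(x)\,G^{-1/2}\big)\le Lc$, i.e.\ to $\lambda_{\mathrm{max}}(M)\le L$ with $M$ precisely the matrix \eqref{Eqn:epismooth-hessian}. This handles both directions at once.

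As a sanity check and an alternative derivation, I would note that $M$ is similar to the Weingarten map $G^{-1}\,\mathrm{II}$ of the graph $\{(y,f(y))\}$ at $(x,f(x))$, whose first and second fundamental forms are $G$ and $\mathrm{II}=\nabla^{2}f/c$; hence the eigenvalues of $M$ are exactly the principal curvatures of $\bdry\,\epi f$ there, and $\lambda_{\mathrm{max}}(M)\le L$ is the classical ``every principal curvature at most $L$'' condition for a radius-$\ol$ ball to fit inside tangentially, consistent with \cref{Prop:SmoothProperties}. The main obstacle I anticipate is not the algebra---the Hessian computation simplifies very cleanly---but being careful about the precise meaning of ``locally $L$-epismooth at $x$'' so that the equivalence is a genuine ``if and only if'' at the critical value $L=\lambda_{\mathrm{max}}(M)$: at equality in some curvature direction, third-order terms decide local validity, so one must either take the second-order comparison $\nabla^{2}f(x)\preceq Lc\,G$ as the definition of the local property, or invoke continuity of $x\mapsto M(x)$ to pin down what the set-smoothness definition demands in a neighborhood of boundary points and treat the threshold case explicitly.
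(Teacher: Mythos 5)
Your argument mirrors the paper's proof exactly: tangency of $f$ and $\ballUB{x}(\cdot;L)$ to first order at $x$, the Hessian computation $\nabla^2_y\ballUB{x}(x;L)=L\sqp{\|\nabla f(x)\|^2}(I+\nabla f(x)\nabla f(x)^T)$, and then conjugation to read off the eigenvalue bound. The Weingarten-map sanity check and the discussion of the threshold case $L=\lambda_{\max}(M)$ are welcome additions, but the core argument is the same.
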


\begin{proof}
    \edit{
We define a local ball upper bound of $f$ at $x$ by
\begin{equation}
    f(y) \leq \ballUB{x}(y; L) := f(x) + \frac{1}{L\sqp{\|\nabla f(x)\|^2}} - \sqrt{\frac{1}{L^2} - \|y - x + \frac{\nabla f(x)}{L \sqp{\|\nabla f(x)\|^2}} \|^2}\label{Eqn:ballUpperBound}
\end{equation}
where we say $\ballUB{x}(y; L) = \infty$ outside of its domain.
}
From the definition of smoothness, $f$ is $L$-epismooth if and only if for all $x,y$, $f(y) \leq \ballUB{x}(y; L)$.
    Observe that $\ballUB{x}(x; L) = f(x)$ and $\nabla \ballUB{x}(x; L) = \nabla f(x)$. Therefore, the upper bound $\ballUB{x}(y; L)$ holds locally if and only if $\nabla^2 f(x) \preceq \nabla^2 \ballUB{x}(x; L)$. We can compute $\nabla^2 \ballUB{x}(x; L) = L \sqp{ \|\nabla f(x)\|^2}(I + \nabla f(x) \nabla f(x)^T)$. Then rearranging and taking the maximum eigenvalue, we obtain our result. \myqed
\end{proof}

Using this result, we can identify simple functions that are not smooth in the function sense of uniformly Lipschitz gradient but satisfy local epismoothness on their domain. For example, consider
\begin{equation*}
    f(x) = \begin{cases}
        - \log x \quad & \mathrm{if\ } x>0 \\
        \infty & \mathrm{if\ } x \leq 0 \ .
    \end{cases}
\end{equation*}
Using \eqref{Eqn:epismooth-hessian}, we can calculate $\lambda_\mathrm{max} (M(x)) = \frac{\frac{1}{x^2}}{(1 + \frac{1}{x^2})^{3/2}} \leq \frac{2}{3\sqrt{3}}$ for all $x>0$.
So $f$ is locally $\frac{2}{3\sqrt{3}}$-epismooth everywhere on its domain. \edit{As a second, nontrivial example, one can verify for example $\|Ax-b\|^4_4$ is epismooth.}

\subsection{Worst-Case Performance of Epismooth Gradient Methods}
The worst-case performance of a gradient method defined by a predetermined stepsize matrix $H$ on a smooth function and on an epismooth function are closely related. \edit{In \cref{Sec:Preliminaries}, we introduced the PEP for smooth functions \eqref{Eqn:basicGD-example1}, and we now present an analogous PEP for the epismooth case.}
In both cases, we assume the initial point $x_0$ satisfies $\|x_0 - x_\star\| \leq R$ for some minimizer $x_\star$ of $f$. Lastly, to ensure that $x_0$ is in the domain of $f$ (i.e., $f(x_0) < \infty$), we further require that $R \leq \ol$. Then the worst-case performance of~\eqref{Eqn:Epi-GM} for a given matrix $H$ on an $L$-epismooth function is
\begin{equation}\label{Eqn:OptEpiGDInfinite}
    \pEpiSmooth(L,R) = \begin{cases} \max\limits_{x_i,f} \quad & f(x_N) - f(\xStar) \\
        \text{s.t.} & \epi f \in \C_{0,L,\infty} \\
        & x_{i+1} = x_i - \frac{1}{L}\sum_{j=0}^{i} H_{i,j} \frac{\nabla f (x_j)}{\sqp{\|\nabla f(x_j)\|^2}} \quad \quad \forall i \in [0\mathrm{\,:\,}N-1] \\
        & \|x_0 - \xStar \| \leq R, \qquad \nabla f(\xStar) = 0 \ .
    \end{cases}
\end{equation}
For simplicity, we have suppressed the parameters $N$ and $H$ from our notation for $\pEpiSmooth$.
\edit{We again denote the PEP for performance of~\eqref{Eqn:GM} given $H$ on an $L$-smooth function by $p_{\mathrm{S}}(L, R)$}.
Both of these settings possess useful rescaling properties, proven in \cref{App:ES-PEP-rescale-proof}.
\begin{lemma} \label{Lem:PEP-rescaling}
    For all $\eta>0$ and any choice of $N,L,R$, one has that
    \begin{align*}
        \begin{array}{ll} \begin{array}{l} p_\mathrm{S}(L, \eta R) = \eta^2 p_\mathrm{S}(L, R) \ , \vspace{0.1cm} \\
        p_\mathrm{S}(\eta L, R) = \eta p_\mathrm{S}(L, R) \ ,
        \end{array} & \qquad\qquad  
        \pEpiSmooth(L, \eta R) = \eta \pEpiSmooth(\eta L,R)
        \ .
        \end{array}
    \end{align*}
\end{lemma}
Observe that together the rescaling properties for $p_\mathrm{S}$ imply $p_\mathrm{S}(L,\eta R) = \eta p_\mathrm{S}(\eta L, R)$, matching that of the epismooth setting. However, notably, the individual rescaling properties for $p_\mathrm{S}$ do not hold for $\pEpiSmooth$. These individual rescaling properties of $p_\mathrm{S}$ establish that it suffices to characterize $p_\mathrm{S}(1,1)$ to fully understand $p_\mathrm{S}$. Since this does not hold for $\pEpiSmooth$, any future characterizations of epismooth minimization methods must depend on the ratio of $L$ and $R$.

Under an appropriate regularity condition, we find that the convergence of any epismooth minimization method converges to its classic smooth convergence rate as the ratio between $L$ and $R$ grows. That is, if initialized sufficiently close to $x_\star$, perhaps from some initial ``burn-in'' procedure, guarantees proven for $L$-smooth minimization can be lifted to $L$-epismooth minimization.

Formally, this result relies on the following regularity condition:  A gradient method defined by $H$ is \textbf{eventually-epismooth-stable} if for any $L$, there exist constants $\bar{R},C > 0$ such that applying~\eqref{Eqn:Epi-GM} to any $L$-epismooth function with $\|x_0 - x_\star\| \leq R < \bar{R}$ must have $\|x_i-x_\star\| \leq C R$, $\|g_i\| \leq C L R$, and $|f_i| \leq C LR^2$ for all $i=0,\dots,N$.
This condition ensures the following asymptotic relation, proven in \cref{App:ES-Proof-Epi-PEP-Convergence}.
\begin{theorem} \label{Thm:Epi-Smooth-PEP-Convergence}
    For any eventually-epismooth-stable method defined by $H$,
    \begin{equation*}
        \lim_{\eta \to 0} \frac{\pEpiSmooth(L, \eta R)}{\eta^2} = \lim_{\eta \to 0} \frac{p_\mathrm{S}(L, \eta R)}{\eta^2} = p_\mathrm{S}(L, R) \ .
    \end{equation*}
\end{theorem}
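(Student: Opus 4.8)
## Proof Plan

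The plan is to establish the two equalities separately. The second equality, $\lim_{\eta\to 0} p_\mathrm{S}(L,\eta R)/\eta^2 = p_\mathrm{S}(L,R)$, follows immediately from the rescaling property $p_\mathrm{S}(L,\eta R) = \eta^2 p_\mathrm{S}(L,R)$ in \cref{Lem:PEP-rescaling}; in fact the ratio is exactly $p_\mathrm{S}(L,R)$ for every $\eta > 0$, so no limit is needed. The substance of the theorem is the first equality, $\lim_{\eta\to 0} \pEpiSmooth(L,\eta R)/\eta^2 = p_\mathrm{S}(L,R)$. For this I would establish two inequalities: a lower bound $\liminf_{\eta\to 0}\pEpiSmooth(L,\eta R)/\eta^2 \geq p_\mathrm{S}(L,R)$ and an upper bound $\limsup_{\eta\to 0}\pEpiSmooth(L,\eta R)/\eta^2 \leq p_\mathrm{S}(L,R)$.

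For the lower bound, the idea is that every $L$-smooth function is $L$-epismooth (\cref{Thm:SetFuncInterp_Smooth} with $\eta=1$), but the iterations \eqref{Eqn:GM} and \eqref{Eqn:Epi-GM} differ by the normalization factor $\sqp{\|\nabla f(x_j)\|^2}$. The key is that when we rescale so the initial radius is $\eta R$ and let $\eta\to 0$, the iterates of a smooth optimal instance stay within $O(\eta)$ of $x_\star$, so all gradients are $O(\eta L R)$, and hence $\sqp{\|\nabla f(x_j)\|^2} = 1 + O(\eta^2)$. Concretely, I would take a (near-)optimal instance $f$ for $p_\mathrm{S}(L,R)$, rescale it via $f_\eta(x) := \eta^2 f(x/\eta)$ (which is still $L$-smooth, with optimal-value gap scaled by $\eta^2$ and initial radius scaled by $\eta$ — consistent with \cref{Lem:PEP-rescaling}), observe $f_\eta$ is $L$-epismooth, and run \eqref{Eqn:Epi-GM} on it. A perturbation argument shows the \eqref{Eqn:Epi-GM} iterates on $f_\eta$ differ from the \eqref{Eqn:GM} iterates by $O(\eta^3)$ (one factor $\eta$ from small gradients, one $\eta^2$ from the difference $\sqp{t}-1 = t/2 + O(t^2)$ at $t = O(\eta^2)$), and by smoothness the objective gap changes by at most $O(\eta^3)$. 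Dividing by $\eta^2$ and letting $\eta\to 0$ yields the lower bound. Here the eventually-epismooth-stable assumption is not strictly needed for \emph{this} direction since we are constructing a specific smooth instance whose iterates we control directly.

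For the upper bound, the eventually-epismooth-stable condition is essential and this is the main obstacle. Given an arbitrary $L$-epismooth worst-case instance $f$ with initial radius $\eta R$, the regularity condition guarantees (for $\eta$ small) that $\|x_i - x_\star\| \leq C\eta R$, $\|g_i\| \leq CL\eta R$, and $|f_i| \leq CL\eta^2 R^2$ for all $i$. The plan is to extract from the finitely many observed data points $(x_i, g_i, f_i)_{i}$ a finite set and argue via interpolation: since these data come from an $L$-epismooth function, $\epi f$ is $L$-smooth, so by our set interpolation theory the data satisfy $\conditionSmSCSet{0,L}$-type conditions; then a Taylor expansion as in \cref{Lem:SetFuncConditionsEta}, applied at the scale $\eta$, shows the rescaled data $(x_i/\eta, \eta g_i/\eta \cdot \text{(appropriate factor)}, \dots)$ nearly satisfy the $\mathcal{F}_{0,L}$-interpolation conditions of \cref{Thm:func-interp} up to $O(\eta)$ error — and critically, the \emph{normalized} iteration \eqref{Eqn:Epi-GM}, after rescaling by $1/\eta$, becomes the \eqref{Eqn:GM} iteration up to $O(\eta^2)$ error because $\sqp{\|g_j\|^2} = 1 + O(\eta^2)$ under the stability bounds. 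Thus any $\eta$-indexed sequence of epismooth instances, after rescaling, converges (along a subsequence, using compactness from the uniform bounds) to a feasible instance for the smooth PEP $p_\mathrm{S}(L,R)$, with objective value $\pEpiSmooth(L,\eta R)/\eta^2 + o(1)$. Taking $\limsup$ gives the bound. The delicate points are: (i) making the compactness/subsequence argument rigorous in the finite-dimensional Gram-matrix representation so that limiting data is genuinely $\mathcal{F}_{0,L}$-interpolable (here one invokes closedness of the interpolation conditions, \cref{Thm:func-interp}), and (ii) verifying the normalization error is genuinely $O(\eta^2)$ and does not accumulate badly across the $N$ iterations — since $N$ is fixed this is a finite sum of $O(\eta^2)$ terms, hence still $O(\eta^2)$, but one must track the $H$-weights carefully.
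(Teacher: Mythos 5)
Your overall decomposition matches the paper's: both reduce to establishing $\liminf_{\eta\to 0}\pEpiSmooth(L,\eta R)/\eta^2 \geq p_\mathrm{S}(L,R)$ and $\limsup_{\eta\to 0}\pEpiSmooth(L,\eta R)/\eta^2 \leq p_\mathrm{S}(L,R)$, and your treatment of the second, trivial equality via \cref{Lem:PEP-rescaling} is identical. Your upper bound argument is essentially the paper's: the paper also extracts the minimizing data $\rho_\eta$ of the epismooth PEP, rescales it (the paper works with $\pEpiSmooth(\eta^2 L, R/\eta)$, which equals $\pEpiSmooth(L,\eta R)/\eta^2$ after two applications of \cref{Lem:PEP-rescaling}), invokes the eventually-epismooth-stable bounds for compactness, passes to a subsequence attaining the $\limsup$, and uses the Taylor expansion of \cref{Lem:SetFuncConditionsEta} together with \cref{Thm:SetFunctionLimit} and continuity to verify the limit point is feasible for $p_\mathrm{S}(L,R)$. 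Where you genuinely diverge is the lower bound. The paper cites, from the proof of \cref{Thm:SetFuncInterp_Smooth}, the inequality $\pEpiSmooth(\eta^2 L, R/\eta)\geq p_\mathrm{S}(\eta^2 L, R/\eta)$ and then applies rescaling; this is a terse argument that implicitly relies on the dominance $\ballUB{x}(y;L)\geq\quadraticUB{x}(y;L)$. You instead construct an explicit $L$-epismooth instance from a near-optimal $L$-smooth one via $f_\eta(x)=\eta^2 f(x/\eta)$, and show by a direct perturbation argument that the \eqref{Eqn:Epi-GM} trajectory on $f_\eta$ stays within $O(\eta^3)$ of the rescaled \eqref{Eqn:GM} trajectory on $f$, so the objective gap is $\eta^2 p_\mathrm{S}(L,R)$ up to $O(\eta^4)$. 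Your route is more constructive and self-contained (it avoids relying on an inequality that is not actually stated as a lemma), and you are right that the eventually-epismooth-stable hypothesis is not needed for this direction because you control the iterates by construction; the trade-off is that your argument requires you to carry out the perturbation estimate carefully, whereas the paper's is a one-line citation. Both approaches are sound.
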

Hence, after an initial burn-in producing $x_0$ with $\|x_0 - x_\star\|$ sufficiently small, optimization strategies for smooth functions such as momentum \cite{Nesterov,OGM} or recent long-step gradient descent techniques \cite{Silver_Accel,Grimmer_Accel} may be applicable to epismooth functions. One simple example of an eventually-epismooth-stable algorithm is \edit{applying \eqref{Eqn:Epi-GM} with stepsizes $h \in (0,2)$ (i.e., $H = \mathrm{diag} (h,\dots, h)$)}. One can directly show this iteration guarantees descent and a nonincreasing distance to $x_\star$, as illustrated in \cref{Fig:EpiSmoothGD_Diagram}, ensuring the needed bounds. 

\edit{Applying our interpolation theory, we verify this convergence of epismooth and smooth worst-case performances}.
Following \edit{\cref{Thm:Interpolation_NoSlack},} the same Grammian reformulation presented in previous \edit{sections provides} a reformulation \edit{of $\pEpiSmooth(L,R)$ as an SDP with an additional rank-1 constraint. We show a brief derivation in \cref{App:EpiSDPDerivation}.
Unfortunately, the rank-1 constraint makes this problem nonconvex.} Hence we approach it directly as a QCQP using black-box optimization software~\cite{JuMP,gurobi}.

\begin{figure}[t]
    \centering
    \includegraphics[width=1.0\textwidth]{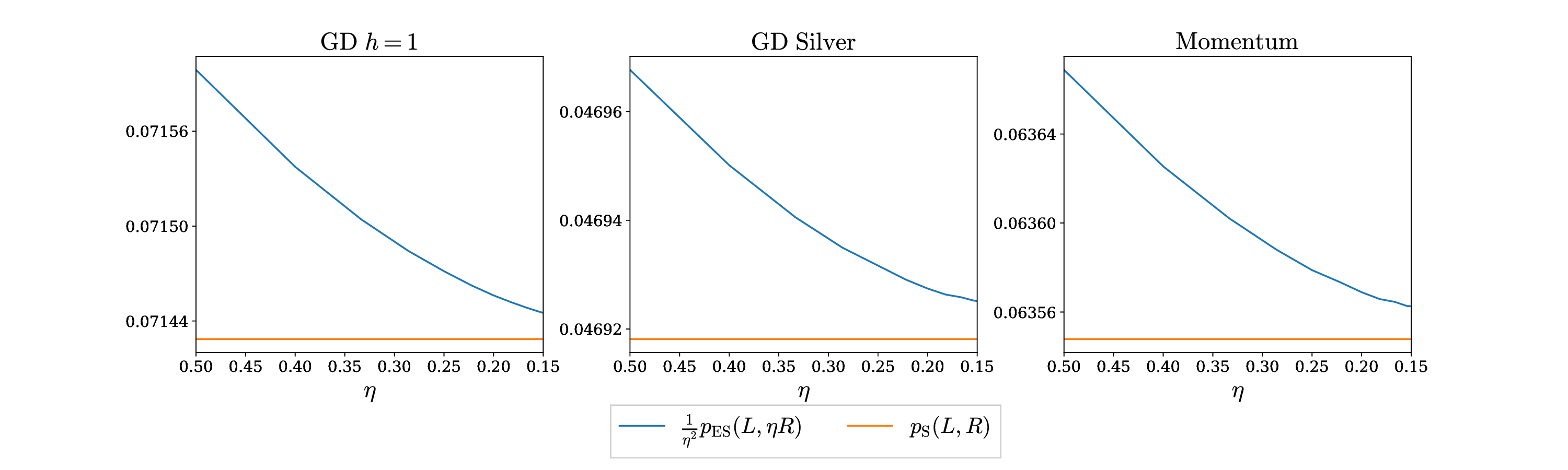}
    \caption{\edit{Convergence} of $\frac{1}{\eta^2}\pEpiSmooth(L,\eta R)$ and $p_\mathrm{S}(L,R)$ as $\eta$ decreases, fixing $N=3$. Data is shown for three different~\eqref{Eqn:Epi-GM} algorithms: gradient descent with $h=1$, gradient descent with silver stepsizes~\cite{Silver_Accel}, and Nesterov fast gradient method~\cite{Nesterov}.}
    \label{Fig:EpiSmoothResults}
\end{figure}

\edit{\cref{Fig:EpiSmoothResults}} considers the worst-case performance of several classic smooth optimization methods. For the stepsize matrices $H$ corresponding to gradient descent (with $h_i=1$), gradient descent with Silver stepsizes, and Nesterov's fast gradient method, \edit{the worst-case behavior of \eqref{Eqn:GM} and \eqref{Eqn:Epi-GM} over their respective function classes converge as $\eta\rightarrow 0$}, agreeing with \cref{Thm:SetFuncInterp_Smooth}.

    \paragraph{Acknowledgments.} This work was supported in part by the Air Force Office of Scientific Research under award number FA9550-23-1-0531. Benjamin Grimmer was additionally supported as a fellow of the Alfred P. Sloan Foundation.

    {\small
    \bibliographystyle{unsrt}
    \bibliography{references}
    }

    \appendix
    \section{Deferred Interpolation Proofs}

\subsection{Proof of \cref{Thm:SetFuncInterp_Smooth}} \label{App:Interp_SetFuncInterp}
\edit{We first introduce notation and a key lemma}. Given $\{(x_i, g_i, f_i)\}_{i \in \I}$, define
\begin{align*}
    \conditionSmSCFunc{\mu,L}(\standardArgs)  =  f_j & - f_i - \langle g_i, x_j - x_i \rangle \\
    & - \frac{1}{2(1-\mu/L)}\left(\frac{1}{L} \|g_i - g_j \|^2 + \mu \|x_i - x_j \|^2 - \frac{2\mu}{L}\langle g_i - g_j, x_i - x_j \rangle \right) \ .
\end{align*}
Recall from \cref{Thm:func-interp} that $\conditionSmSCFunc{\mu,L}(\standardArgs) \geq 0$ for all $i,j \in \I$ if and only if $\{(x_i, g_i, f_i)\}_{i \in \I}$ is $\mathcal{F}_{\mu,L}$-interpolable.
Next, let $z_i = (x_i, f_i)$ and $v_i = (g_i, -1)$, and let $n_i = \unit{v_i}$. We further define
\begin{align*}
    \conditionSmSCSet{\mu,L}(\standardArgs) & = \frac{\mu}{2(1-\mu/L)} \left( (\om - \ol)^2 - \|z_j - \ol n_j - z_i + \om n_i \|^2 \right)  \\
    & = \frac{L-\mu}{2L\mu} - \frac{\mu}{2(1-\mu/L)} \left(\|x_j - \frac{g_j}{L\|v_j\|} - x_i +  \frac{g_i}{\mu\|v_i\|} \|^2 \right.\\
    & \qquad \qquad \qquad \qquad \qquad \qquad \qquad \left. + (f_j + \frac{1}{L\|v_j\|} - f_i - \frac{1}{\mu\|v_i\|})^2 \right) \ .
\end{align*}
From \cref{Thm:Interpolation_SuffWithSlack}, we know that $\conditionSmSCSet{\mu,L}(\standardArgs) \geq 0$ for all $i,j \in \I$ if and only if $\setInterpArgs$ is $\mathcal{C}_{\mu,L,\infty}$-interpolable \edit{(This is a direct application of \eqref{Eqn:Condition1})}.
\edit{We can relate these conditions through the following lemma, which is proven by a direct Taylor series expansion.}

\begin{lemma} \label{Lem:SetFuncConditionsEta}
    For any $\eta\geq 0$,
    \begin{align*}
        \conditionSmSCSet{\eta^2 \mu,\eta^2 L}(\rescaledArgs) = \conditionSmSCFunc{\mu,L}(\standardArgs) + \eta^2 c(x,g,f,\mu,L,\eta)
    \end{align*}
    where \edit{$\eta^2 c(x,g,f,\mu,L,\eta) \to 0$} as $\eta \to 0$.
\end{lemma}

\edit{Next we consider smoothness of the epigraph of $f$, $\epi f$. This idea is explored further in \cref{Sec:EpiSmooth} but is necessary here for our proof.}
Note for any differentiable convex function $f$, $(\nabla f(x), -1) \in N_{\mathrm{epi} f}(x,f(x))$. Then, by definition, $\epi f$ being an $L$-smooth set is equivalent to \edit{a local ball upper bound $\ballUB{x}(y; L)$ holding for all $x,y$ (See \eqref{Eqn:ballUpperBound} for explicit expression)}.
This ball upper bound $\ballUB{x}(y;L)$ \edit{is strictly larger than} the analogous quadratic bound for smooth functions. That is, for all $x,y$, we have
\begin{align*}
    \ballUB{x}(y;L) & \geq \quadraticUB{x}(y;L) := f(x) + \langle \nabla f(x), y-x\rangle + \frac{L}{2}\|x - y\|^2 \ .
\end{align*}
Now, we are ready to prove \cref{Thm:SetFuncInterp_Smooth}.

    ($\Rightarrow$) Suppose that $\{(x_i,g_i, f_i)\}_{i \in \I}$ is $\mathcal{F}_{0,L}$-interpolable. So there exists an $L$-smooth convex function $f$ such that $f(x_i) = f_i$ and $\nabla f(x_i) = g_i$. 
    Observe that for any $\eta > 0$, $\hat{f}$ defined by $\hat{f}(x) = f(\eta x)$ is an $\eta^2 L$-smooth convex function with $\hat{f}(\frac{x_i}{\eta}) = f(x_i) = f_i$ and $\nabla \hat{f}(\frac{x_i}{\eta}) = \eta \nabla f( \eta \frac{x_i}{\eta}) = \eta g_i$. Then for all $x,y$, $\hat{f}(y) \leq \quadraticUB{x}(y; \eta^2 L) \leq \ballUB{x}(y; \eta^2 L)$. As a result, $\epi \hat{f}$ is $\eta^2 L$-smooth.
    Finally,  since \edit{for any $x$}, $(\nabla \hat{f}(x), -1) \in N_{\epi \hat{f}}(x, \hat{f}(x))$, \edit{this implies that} $\epi \hat{f}$ interpolates $\setInterpRescaledArgs$.

    ($\Leftarrow$) Suppose that $\setInterpRescaledArgs$ is $\mathcal{C}_{0, \eta^2L, \infty}$-interpolable for all $\eta > 0$. By our interpolability theorems, we know that \edit{$\conditionSmSCSet{0,\eta^2 L}(\rescaledArgs) \geq 0$} for all $i,j$. Applying \cref{Lem:SetFuncConditionsEta} yields\edit{, for any $\eta > 0$,}
    \begin{align*}
        \conditionSmSCFunc{0,L}(\standardArgs)  & = \conditionSmSCSet{0,\eta^2 L}(\rescaledArgs) + \eta^2 c(x,g,f,\mu,L,\eta) \geq \eta^2 c(x,g,f,\mu,L,\eta)\ .
    \end{align*}
    Considering this as $\eta$ tends to $0$ implies $\conditionSmSCFunc{0,L}(\standardArgs) \geq 0$ from which \cref{Thm:func-interp} ensures $\{(x_i, g_i, f_i)\}_{i \in \I}$ is $\mathcal{F}_{0,L}$-interpolable. \myqed

\section{Deferred Calculations on Separating Hyperplane Algorithms} \label{App:SH-Extra}

\subsection{An SDP Reformulation of the Stopping Decision Problem} \label{App:SH-SDP-reformulation}
    We follow the approach in many previous works \cite{Interpolation,Interpolation2,Drori2019} by converting our finite-dimensional problem \eqref{Eqn:OptSH-QCQP} into an SDP. We follow similar notation to that of \cite{Drori2019} and introduce it below. \edit{First, define}
    \begin{align*}
        & \gramMatrix = [x_0|z_0|z_1|\dots|z_N|n_0|n_1|\dots|n_N|w] \in \R^{d\times(2N+4)} \\
        & G = \gramMatrix^T \gramMatrix \in \mathbb{S}_+^{2N+4}
    \end{align*}
    \edit{Next, using} the standard unit basis vectors $e_i$, define special selection vectors for extracting particular elements of our matrix $G$:
    \begin{equation*}
    \begin{array}{ll}
        \mathbf{x_0} = e_1 \in \R^{2N+4} \\[2pt]
        \mathbf{z_i} = e_{i+2} \in \R^{2N+4} & i \in [0\mathrm{\,:\,}N] \\[2pt]
        \mathbf{n_i} = e_{i+(N+3)} \in \R^{2N+4} & i \in [0\mathrm{\,:\,}N] \\[2pt]
        \mathbf{w} = e_{2N+4} \in \R^{2N+4} \\[2pt]
        \mathbf{x_{i+1}} = \mathbf{x_i} - \sum_{j=0}^i H_{i,j} \mathbf{n_j} \in \R^{2N+4} \quad & i \in [0\mathrm{\,:\,}N-1] \ .
    \end{array}
    \end{equation*}
    This construction yields the useful identities $\gramMatrix \mathbf{x_i} = x_i$, $\gramMatrix \mathbf{z_i} = z_i$, $\gramMatrix \mathbf{n_i} = n_i$, and $\gramMatrix \mathbf{w} = w$ (lastly, recall our assumption that $q = 0$). 
    Denote the symmetric outer product by $x \odot y = \half x y^T + \half y x^T$.
    Then we can directly express the standard dot product of our vectors as $\Tr \left(G \left( \mathbf{x_i} \odot \mathbf{z_j} \right) \right) = \langle x_i, z_j \rangle$
    for any $i,j \in [0\mathrm{\,:\,}N]$ (and this holds analogously for other vector combinations). Lastly, we invoke the assumption that the problem dimension $d$ satisfies $d \geq 2N+4$ to guarantee that any identified $G$ can be factorized into some $\gramMatrix \in \R^{d \times (2N+4)}$, which is common practice throughout PEP literature \cite{FirstPEP} \edit{(See \cref{Rem:DimensionBound})}.
    
    We can then write our feasibility condition \eqref{Eqn:OptSH-QCQP} as
    \begin{equation} \label{Eqn:SDP_SH}
        \exists G \in \mathbb{S}_+^{2N+4} \ \mathrm{s.t.}\ \begin{cases} 
        & G \succeq 0 \\
        & \Tr \left( G \left( (\mathbf{z_i} - \oa \mathbf{n_i} - \mathbf{z_j} + \ob \mathbf{n_j} ) \odot (\mathbf{z_i} - \oa \mathbf{n_i} - \mathbf{z_j} + \ob \mathbf{n_j} ) \right) \right) \leq \frac{1}{\gamma^2}\\
        & \Tr \left( G \left( ( \mathbf{z_i} - \oa \mathbf{n_i} - \mathbf{w}) \odot ( \mathbf{z_i} - \oa \mathbf{n_i} - \mathbf{w}) \right) \right) \leq (\og - s)^2 \\
        & \Tr \left( G \left( \mathbf{w} \odot \mathbf{w} \right) \right) \leq (\ob - \delta + s)^2 \\
        & \Tr \left( G \left( \mathbf{n_i} \odot (\mathbf{z_i} - \mathbf{x_i}) \right) \right) \leq 0\\
        & \Tr \left( G \left( \mathbf{n_i} \odot \mathbf{n_i} \right) \right) = 1 \\
        & \Tr \left( G \left( \mathbf{x_0} \odot \mathbf{x_0} \right) \right) \leq R^2 \\
        \end{cases}
    \end{equation}
    with $\gamma = (\oa - \ob)^{-1}$ and $s = \max\{0, \delta - \ob \}$.

\subsection{Proof of Theorem~\ref{Thm:SH-minimax-optimal}} \label{App:SH-minimax-optimal-proof}
    We first claim that there exists $\hat{q}$ such that $B(\hat{q}, h) \subseteq C$ and $\|\hat{q} - q \| \leq h - \delta$.
    If $\ob \leq \delta$, then $h = \delta$ and we simply let $\hat{q} = q$ and the claim is trivial. Otherwise, if $\ob > \delta$, let $z \in \argmin_{y \in \bdry C} \|y - q\|$. The optimality of $z$ ensures $z - q \in N_C(z)$ and hence, by smoothness of $C$, $B(z - \ob \unit{z-q}, \ob) \subseteq C$.
    If $\|z-q\| > \ob$, then $B(q,\ob) \subseteq C$, so we again let $\hat{q} = q$, with $h = \ob$.
    Lastly, suppose $\|z-q\| \leq \ob$. Then setting $\hat{q} = z - \ob \unit{z-q}$ yields 
    \begin{align*}
        \|q - \hat{q}\| & = \|z - \ob \unit{z-q} - q\| 
        = \ob - \| z-q\| \leq \ob - \delta = h - \delta
    \end{align*}
    where the inequality follows from $z \in \bdry C$, so $\|z-q\| \geq \delta$. Since $B(\hat{q}, \ob) \subseteq C$, this proves our first claim.
    
    \noindent {\it Proof of Claimed Stopping Time.} Suppose that by iteration $k$, the method has not yet terminated. Since $x_k \notin \interior C$, $x_k \notin \interior B(\hat{q},h)$. By definition of $n_k$ as a separating hyperplane, we know $\langle n_k, y- x_k \rangle \leq 0$ for all $y \in B(\hat{q},h)$. Considering $y = \hat{q} + hn_k$, this yields $\langle n_k, x_k - \hat{q} \rangle \geq h$.
    Using this fact, every iteration $k$ satisfies
    \begin{align*} 
        \|x_{k+1} - \hat{q}\|^2 &=  \|x_k - \hat{q}\|^2 + h^2 - 2h \langle n_k, x_k - \hat{q} \rangle \leq \|x_k - \hat{q}\|^2 - h^2 \ .
    \end{align*}
    Inductively applying this ensures that if all $x_0,\dots, x_N\not\in \interior C$, then
    \begin{equation}
    \label{Eqn:SHInduction}
        \|x_N - \hat{q} \|^2 \leq \|x_0 - \hat{q}\|^2 - N h^2 \leq (R+h-\delta)^2 - N h^2
    \end{equation}
    where we use the fact that if $\delta > \ob$, then $R + h-\delta = R \geq \|x_0 - q\| = \|x_0 - \hat{q}\|$, and if $\delta \leq \ob$, then $\|x_0 - \hat{q} \| \leq \|x_0 - q\| + \|q - \hat{q}\| \leq R + (\ob - \delta) = R + h -\delta$.
    Noting $x_N \notin \interior C$ implies that $x_N \notin \interior B(\hat{q},h)$, i.e. $\|x_N - \hat{q} \| \geq h$, it follows from $(R+h-\delta)^2 - Nh^2 \geq \|x_N - \hat{q}\|^2 \geq h^2$ that $N \leq \frac{(R+h-\delta)^2}{h^2} - 1.$
    Hence after $N = \lfloor \frac{ (R+h-\delta)^2}{h^2} \rfloor$ iterations, some iterate $x_k$ must have lied in the interior of $C$, halting the algorithm.

    \noindent {\it Proof of Matching Lower Bound.} Finally, we establish an exactly matching lower bound on any method of the form~\eqref{Eqn:SH-general-step}. For simplicity, we will assume that $N = \frac{ (R+h-\delta)^2}{h^2}$ is an integer, however, one can adjust our construction to handle non-integer cases. Below we construct a hard problem instance such that for any separating hyperplane method, $x_{k} \notin \interior C$ for all $k\leq N-1$.
    
    Let $x_0 = 0$, $q = \edit{(R/\sqrt{N}, \dots, R/\sqrt{N})} \in \R^N$, and $\hat{q} = (h, \dots, h)$. Further define $C = B(\hat{q},h) \subseteq \R^N$ and observe that $q \in \Interior{\delta} C$ and $\|x_0 - q\| = \edit{(R/\sqrt{N}) \sqrt{N}} = R$. For each $k$, we can select $n_k = -e_{k+1}$, the negative unit basis vector in $\R^N$. Note this is a valid separating hyperplane due to $x_k$'s support being its first $k$ entries as $\langle n_k, x_k \rangle = 0 \geq \langle n_k, y \rangle$ for all $y \in C$. Moreover, this choice of $n_k$ ensures the support of $x_{k+1}$ will only increase by one.
    Noting all $k\leq N-1$ have final entry equal to zero, $\|x_{k} - \hat{q}\| \geq h$ and so $x_{k} \notin \interior C$. Therefore no separating hyperplane \edit{method} can identify an interior point faster than the above simple constant stepsize method.

\section{Deferred Calculations on Frank-Wolfe Methods} \label{App:FW-Extra}

\edit{To fully justify our PEP formulation, we need to derive two simple propositions. We will once again consider separately the cases where $g_\star = 0$ and $g_\star \neq 0$ (See \cref{Subsec:FW-Derive-PEP}).} In the results below, let $\I = [0\mathrm{\,:\,}N-1]$ and $\K = [0\mathrm{\,:\,}N]$, along with $\IStar = \I \cup \{\star\}$ and $\KStar = \K \cup \{\star\}$.

\begin{proposition}\label{Thm:FW-Interpol-Ext}
    Consider any points $z_i$, $x_\star$, and $x_{i+1} = (1-h_i)x_i + h_i z_i$ \edit{and vectors $g_i$} and define
    \begin{equation*}
        \SExternal = \SSpecialExterior
    \end{equation*}
    where $g_\star \neq 0$.
    Then there exist $f \in \mathcal{F}_{\mu,L}$ and $C \in \mathcal{C}_{\alpha,\beta,D}$ satisfying $z_i \in \argmin_{y \in C} \langle \nabla f(x_i),y\rangle$ for all $i \in \I$, $x_\star \in \argmin_{y \in C} f(y)$, and $x_0 \in C$ if and only if  $\SExternal$ is $\C_{\alpha,\beta,D}$-interpolable and $\{(x_k, g_k, f_k)\}_{k\in \KStar}$ is $\mathcal{F}_{\mu,L}$-interpolable,
    where $f(x_k) = f_k$ and $\nabla f(x_k) = g_k$ for all $k \in \KStar$.
\end{proposition}

\begin{proof}
    ($\Rightarrow$) Suppose that there exist $f \in \F_{\mu,L}$ and $C \in \C_{\alpha,\beta,D}$ such that $x_\star \in \argmin_{y \in C} f(y)$ with $\nabla f(x_\star) \neq 0$ and applying Frank-Wolfe with $x_0 \in C$ yields $z_i \in \argmin_{y \in C} \langle \nabla f(x_i), y \rangle$
    for all $i \in I$. By construction, setting $g_k = \nabla f(x_k)$ and $f(x_k) = f_k$ for all $k \in \KStar$ ensures $\{(x_k, g_k, f_k)\}_{k\in \KStar}$ is $\mathcal{F}_{\mu,L}$-interpolable.    
    Since $C$ is convex, the optimality condition defining $z_i \in \argmin_{y \in C} \langle g_i, y \rangle$ implies that $z_i \in C$ and $-g_i \in N_C(z_i)$. Similarly, the optimality condition for $x_\star$ ensures that $-g_\star \in N_C(x_\star)$. \edit{Since $x_0 \in C$, we have that $\SExternal$ is $\C_{\alpha,\beta,D}$-interpolable.}

    ($\Leftarrow$) Suppose that $\SExternal$ is $\C_{\alpha,\beta,D}$-interpolable and $\{(x_k, g_k, f_k)\}_{k\in \KStar}$ is $\mathcal{F}_{\mu,L}$-interpolable. That is, there exists a $\mu$-strongly convex, $L$-smooth function $f$ such that that $f(x_k) = f_k$ and $\nabla f(x_k) = g_k$ for all $k \in \KStar$ and an $\alpha$-strongly convex, $\beta$-smooth set $C$ such that \edit{$x_0 \in C$ and $z_i \in C$ and $-g_i \in N_C(z_i)$ for all $i \in \IStar$}. Since $C$ is convex, we know that $-\nabla f(x_i) \in N_C(z_i)$ implies that $z_i \in \argmin_{y \in C} \langle \nabla f(x_i), y \rangle$. Similarly, $-\nabla f(x_\star) \in N_C(x_\star)$ implies that \edit{$x_\star \in \argmin_{y \in C} f(y)$}. Hence the desired $f$ and $C$ exist. \myqed
\end{proof}

\begin{proposition}\label{Prop:FW-Interpol-Int}
    Consider any points $z_i$, $x_\star$, and $x_{i+1} = (1-h_i)x_i + h_i z_i$ \edit{and vectors $g_i$} and define
    \begin{equation*}
        \SInternal = \SSpecialInterior \ .
    \end{equation*}
    \edit{Then} there exist $f \in \mathcal{F}_{\mu,L}$ and $C \in \mathcal{C}_{\alpha,\beta,D}$ satisfying $z_i \in \argmin_{y \in C} \langle \nabla f(x_i),y\rangle$ for all $i \in \I$ with $x_0 \in C$ and \edit{$x_\star \in \argmin_{y \in C} f(y)$} with $x_\star \in \Interior{\delta} C$ if and only if \edit{$\nabla f(x_\star) = 0$}, $\SInternal$ is $\C_{\alpha,\beta,D}$-interpolable, and $\{(x_k, g_k, f_k)\}_{k\in \KStar}$ is $\mathcal{F}_{\mu,L}$-interpolable,
    where $f(x_k) = f_k$ and $\nabla f(x_k) = g_k$ for all $k \in \KStar$.
\end{proposition}

\begin{proof}
    We apply the same approach as above, \edit{but now use the fact that $x_\star = \argmin_{y \in C} f(y)$ and $x_\star \in \Interior{\delta} C$ imply $\nabla f(x_\star) = 0$, and $\nabla f(x_\star) = 0$ implies $x_\star = \argmin_{y \in C} f(y)$}. \myqed
\end{proof}

\section{Deferred Calculations on Epismooth Gradient Methods} \label{App:ES-Extra}

\subsection{Proof of Lemma~\ref{Lem:PEP-rescaling}} \label{App:ES-PEP-rescale-proof}
Denote $b_i = \frac{\nabla f (x_i)}{\sqp{\|\nabla f(x_i)\|^2 }} \in \R^d$ and $t_i = \frac{-1}{\sqp{\|\nabla f(x_i)\|^2 }} \in \R$. Then this follows from a simple change of variables, $\tilde{x}_i = \frac{x_i}{\eta}$ and $\tilde{f}_i = \frac{f_i}{\eta}$, as

\begin{align*}
    \pEpiSmooth(L,\eta R) & = \left\{ \begin{array}{lll} \max\limits_{x_i,b_i,f_i} \quad & f_N \\
        \text{s.t.} & x_{i+1} = x_i - \ol \sum^i_{j=0} H_{i,j} b_j \quad & \forall i \in \I \\
        & \|x_0 - x_\star \|^2 \leq \eta^2 R^2 \\
        & b_\star = 0, \quad f_\star = 0, \quad x_\star = 0 \\
        & t_k = -\sqrt{1-\|b_k\|^2} \quad & \forall k \in \KStar \\
        & \langle b_k, x_\el - \frac{1}{L} b_\el - x_k + \frac{1}{L} b_k \rangle  \\
        & \qquad \quad + \langle t_k, f_\el- \frac{1}{L} t_\el - f_k + \frac{1}{L} t_k \rangle \leq 0 \quad & \forall k,\el \in \KStar
    \end{array} \right. \\
    & = \left\{ \begin{array}{lll} \max\limits_{\tilde{x}_i,b_i,\tilde{f}_i} \quad & \eta \tilde{f}_N \\
        \text{s.t.} & \tilde{x}_{i+1} = \tilde{x}_i - \frac{1}{\eta L} \sum_{j=0}^i H_{i,j} b_j \quad & \forall i \in \I \\
        & \|\tilde{x}_0 - \tilde{x}_\star \|^2 \leq R^2 \\
        & b_\star = 0, \quad \tilde{f}_\star = 0, \quad \tilde{x}_\star = 0 \\
        & t_k = -\sqrt{1-\|b_k\|^2} \quad & \forall k \in \KStar \\
        & \langle b_k, \tilde{x}_\el - \frac{1}{\eta L} b_\el -  \tilde{x}_k + \frac{1}{\eta L} b_k \rangle  \\
        & \qquad \quad + \langle t_k, \tilde{f}_\el- \frac{1}{\eta L} t_\el - \tilde{f}_k + \frac{1}{\eta L} t_k \rangle \leq 0 \quad & \forall k,\el \in \KStar
    \end{array} \right. \\
    & = \eta \, \pEpiSmooth(\eta L, R) \ .
\end{align*}

\noindent Similarly, for $p_\mathrm{S}$, the first result follows using the change of variables $\tilde{x}_i = \frac{x_i}{\eta}$, $\tilde{g}_i = \frac{g_i}{\eta}$ and $\tilde{f}_i = \frac{f_i}{\eta^2}$, and the second from $\tilde{g}_i = \frac{g_i}{\eta}$ and $\tilde{f}_i = \frac{f_i}{\eta}$.

\subsection{Derivation of SDP with Rank Constraints for Epismooth Gradient Methods} \label{App:EpiSDPDerivation}

\edit{Assume without loss of generality that $x_\star = 0$ and $f_\star = 0$.} Using $b_i$ and $t_i$ as defined in \cref{App:ES-PEP-rescale-proof}, we define
\begin{align*}
    & \gramMatrix = [x_0|b_0|\dots|b_N]  \in \R^{d \times (N+2)}\\
    & v = (f_0, \dots, f_N, t_\star,t_0,\dots,t_N) \in \R^{1\times(2N+3)}
\end{align*}
with $G = \gramMatrix^T \gramMatrix \in \mathbb{S}^{N+2}_+$ and $F = v^T v \in \mathbb{S}^{2N+3}_+$. We define $\mathbf{x_i}$ and $\mathbf{b_i}$ as selection vectors relative to $G$, similar to \cref{App:SH-Extra}, and we define $\mathbf{f_i}$ and $\mathbf{t_i}$ as selection vectors relative to $F$. We can then encode $\pEpiSmooth(L,R)$ as an SDP with an additional rank-1 constraint:
\begin{equation}    \label{Eqn:SDP_EpiSmooth_FullVersion}
    (\pEpiSmooth(L,R))^2 = \left\{\begin{array}{lll}
                \max\limits_{F,G,v} \quad & \Tr \left( F \left( \mathbf{f_N} \odot \mathbf{f_N} \right) \right) \\
                \text{s.t. } \qquad \quad & G \succeq 0, \qquad  F \succeq 0 \\
                & \Tr \left( G \left( (\mathbf{x_0} - \mathbf{x_\star}) \odot (\mathbf{x_0} - \mathbf{x_\star}) \right) \right) \leq R^2 \\
                & \Tr \left( G \left( \mathbf{b_k} \odot \mathbf{b_k} \right) + F \left( \mathbf{t_k} \odot \mathbf{t_k} \right) \right) = 1 \quad & \forall k \in \KStar\\
                & \Tr \left( G \left( \mathbf{b_k} \odot (\mathbf{x_\el} - \ol \mathbf{b_\el} - \mathbf{x_k} + \ol \mathbf{b_k}) \right) \right. \\
                    & \left. \qquad  + F \left( \mathbf{t_k} \odot(\mathbf{f_\el} - \ol \mathbf{t_\el} - \mathbf{f_k} + \ol \mathbf{t_k}) \right) \right) \leq 0 \quad & \forall k,l \in \KStar \\
                & \Tr \left( F(\mathbf{t_\star} \odot \mathbf{t_\star}) \right) = 1 \\
                & \Tr \left( F(\mathbf{t_\star} \odot \mathbf{f_0} ) \right) \leq 0 \\
                & \Tr \left( F(\mathbf{t_k} \odot \mathbf{f_0} )\right) \leq 0 \quad & \forall k \in \K \\
                & F = v^T v \ .
            \end{array} \right.
\end{equation}

In order to enforce $t_\star = -1$, we use the two constraints $t_\star^2 = 1$ and $t_\star f_0 \leq 0$. The first ensures that $t_\star = \pm 1$. By convexity of $f$, we know that $f_0 \geq f_\star = 0$. If $f_0 = 0$, then $b_0 = 0$, and consequently $f_N = 0$, so this is irrelevant for the worst-case instance. So we know effectively that $f_0 > 0$ and conclude that $t_\star = -1$. Similarly, to enforce that $t_k \leq 0$, we require $t_k f_0 \leq 0$.

\subsection{Proof of Theorem~\ref{Thm:Epi-Smooth-PEP-Convergence}} \label{App:ES-Proof-Epi-PEP-Convergence}

From \cref{Lem:PEP-rescaling}, we know that $\frac{1}{\eta^2} p_\mathrm{S}(L,\eta R) = p_\mathrm{S}(L,R)$ for all $\eta$. So the second equality is immediate. It remains to show $\lim_{\eta \to 0} \frac{p_\mathrm{ES}(L, \eta R)}{\eta^2} = p_\mathrm{S}(L,R)$.

Let $\rho_\eta := (\etaIdx{x}, \etaIdx{g}, \etaIdx{f})$ denote the sequence of \edit{maximizers} of $\pEpiSmooth(\eta^2 L, \frac{R}{\eta})$ as $\eta \to 0$. We consider the rescaled sequence $s_\eta = ( \eta \etaIdx{x}, \frac{\etaIdx{g}}{\eta}, \etaIdx{f})$.
By our method being eventually-epismooth-stable and the fact that $\rho_\eta$ is a solution to $\pEpiSmooth(\eta^2L, \frac{R}{\eta})$, we must have $\|\etaIdx{x}_i\| \leq C \frac{R}{\eta}$, $\|\etaIdx{g}_i\| \leq C \eta^2 L \frac{R}{\eta}$, and $|\etaIdx{f}_i| \leq C \eta^2 L \frac{R^2}{\eta^2} $ for all $i$. 
Consequently, we have $\|\eta \etaIdx{x}_i\| \leq C R$, $\| \frac{\etaIdx{g}_i}{\eta} \| \leq C LR$, and $|\etaIdx{f}_i| \leq C L R^2$ for all $i$. This shows that $s_\eta$ belongs to a compact set for all $\eta$. We consider $\limsup_{\eta \to 0} \pEpiSmooth(\eta L, \frac{R}{\eta})$ and define $\eta_k$ as the subsequence attaining the $\limsup$. By compactness, consider a further subsequence $\eta_{k'}$ such that $s_{\eta_{k'}}$ converges to some limit point $s^*$.

Next, we claim that $s^*$ is a feasible point for $p_\mathrm{S}(L,R)$. We know that each $\rho_\eta = ( \etaIdx{x}, \etaIdx{g}, \etaIdx{f} )$ satisfies the constraints
\begin{equation*}
    \begin{cases}
        & \etaIdx{x}_{i+1} = \etaIdx{x}_i - \frac{1}{\eta^2 L} \sum_{j=0}^i H_{i,j} \frac{\etaIdx{g}_j}{\sqp{\|\etaIdx{g}_j\|^2}} \\
        & \|\etaIdx{x}_0 - \etaIdx{x}_\star \|^2 \leq \frac{R^2}{\eta^2} \\
        & \conditionSmSCSet{0,\eta^2 L}(\etaIdx{x},\etaIdx{g},\etaIdx{f}) \geq 0 \ .
    \end{cases}
\end{equation*}
Then rescaling with $\etaIdx{\tilde{x}} = \eta \etaIdx{x}$, $\etaIdx{\tilde{g}} = \frac{\etaIdx{g}}{\eta}$, and $\etaIdx{\tilde{f}} = \etaIdx{f}$ yields
\begin{equation*}
    \begin{cases}
        & \etaIdx{\tilde{x}}_{i+1} = \etaIdx{\tilde{x}}_i - \frac{1}{L} \sum_{j=0}^i H_{i,j} \frac{\etaIdx{\tilde{g}}_j}{\sqp{\eta^2\|\etaIdx{\tilde{g}}_j\|^2}} \\
        & \|\etaIdx{\tilde{x}}_0 - \etaIdx{\tilde{x}}_\star \|^2 \leq R^2 \\
        & \conditionSmSCSet{0,\eta^2 L}(\frac{\etaIdx{\tilde{x}}}{\eta},\eta \etaIdx{\tilde{g}},\etaIdx{\tilde{f}}) \geq 0 \ .
    \end{cases}
\end{equation*}
\edit{From \cref{Lem:SetFuncConditionsEta}, taking the limit as $\eta \to 0$, we have that $\lim_{\eta \to 0} \conditionSmSCSet{\eta^2 \mu, \eta^2 L}(\rescaledArgs) \geq 0$ if and only if $\conditionSmSCFunc{\mu,L}(\standardArgs) \geq 0$.
Applying this and} the continuity of our constraints, $s^* = (\tilde{x}^*, \tilde{g}^*, \tilde{f}^* )$ must satisfy
\begin{equation*}
    \begin{cases}
        & \tilde{x}_{i+1}^* = \tilde{x}_i^* - \frac{1}{L} \sum_{j=0}^i H_{i,j} \tilde{g}_j^* \\
        & \|\tilde{x}_0^* - \tilde{x}_\star^* \|^2 \leq R^2 \\
        & \conditionSmSCFunc{0,L}(\tilde{x}^*,\tilde{g}^*,\tilde{f}^*) \geq 0 \ .
    \end{cases}
\end{equation*}
Therefore $s^*$ is a feasible solution to $p_\mathrm{S}(L,R)$ and so $p_\mathrm{S}(L,R) \geq \tilde{f}^*_N-\tilde{f}^*_\star$.

We know from \edit{the proof of \cref{Lem:SetFuncConditionsEta}} that for all $\eta$, $\pEpiSmooth(\eta^2 L, \frac{R}{\eta}) \geq p_\mathrm{S}(\eta^2 L, \frac{R}{\eta})$. Combining this with our rescaling result in \cref{Lem:PEP-rescaling}, we have
\begin{equation*}
    \liminf_{\eta \to 0} \pEpiSmooth(\eta^2 L, \frac{R}{\eta}) \geq \liminf_{\eta \to 0} p_\mathrm{S}(\eta^2 L, \frac{R}{\eta}) = p_\mathrm{S}(L,R) \ .
\end{equation*}
Combining our results above, we can squeeze the limit as follows
\begin{align*}
\liminf_{\eta \to 0} \pEpiSmooth(\eta^2 L, \frac{R}{\eta}) & \geq p_\mathrm{S}(L,R) \geq \tilde{f}^*_N-\tilde{f}^*_\star 
 = \lim_{k \to \infty} f^{(\eta_k')}_N-f^{(\eta_k')}_\star = \limsup_{\eta \to 0} \pEpiSmooth(\eta^2L, \frac{R}{\eta}) \ . %\geq \liminf_{\eta \to 0} \pEpiSmooth(\eta^2 L, \frac{R}{\eta}) \ .
\end{align*}
Therefore, $\lim_{\eta \to 0} \pEpiSmooth(\eta^2 L, \frac{R}{\eta}) = p_\mathrm{S}(L,R)$.
Applying \cref{Lem:PEP-rescaling} twice gives the final claim.

\end{document}